\newtheorem{thm}{Theorem}[section]
\newtheorem{cor}[thm]{Corollary}
\newtheorem{lem}[thm]{Lemma}
\newtheorem{prop}[thm]{Proposition}
\newtheorem{thmintro}{Theorem}
\newtheorem{conjintro}[thmintro]{Conjecture}
\theoremstyle{definition}
\newtheorem{defn}[thm]{Definition}
\providecommand{\norm}[1]{\left\| #1 \right\|}
\newcommand{\enuma}[1]{\begin{enumerate}[\textup{(}a\textup{)}] {#1} \end{enumerate}}
\newcommand{\mb}{\mathbf}
\newcommand{\mh}{\mathbb}
\newcommand{\mr}{\mathrm}
\newcommand{\mc}{\mathcal}
\newcommand{\mf}{\mathfrak}
\newcommand{\N}{\mathbb N}
\newcommand{\Z}{\mathbb Z}
\newcommand{\R}{\mathbb R}
\newcommand{\C}{\mathbb C}
\newcommand{\KGK}{K \backslash G / K}
\def\top{{\rm top}}
\begin{document}

\title{On the Baum--Connes conjecture with coefficients for linear algebraic groups}
\author{Maarten Solleveld}
\address{IMAPP, Radboud Universiteit Nijmegen, Heyendaalseweg 135, 
6525AJ Nijmegen, the Netherlands}
\email{m.solleveld@science.ru.nl}
\date{\today}
\thanks{
The author is supported by a NWO Vidi grant "A Hecke algebra approach to the 
local Langlands correspondence" (nr. 639.032.528).}
\subjclass[2010]{46L80, 20G99, 19K99}
\maketitle

\begin{abstract}
We prove the Baum--Connes conjecture with arbitrary coefficients for some classes of groups: 

(1) Linear algebraic groups over a non-archimedean local field.

(2) Linear algebraic groups over the adeles of a global field $k$, provided that at every
archimedean place of $k$ the associated Lie group is amenable.

(3) All closed subgroups of the above groups. This includes linear algebraic groups
over global fields - with the same condition as in (2).

\texttt{Proof incomplete, problems in Lemma 2.2.a!} 
\end{abstract}

\tableofcontents

\section*{Introduction}

Let $G$ a locally compact group, always assumed to be Hausdorff and second countable. 
The Baum--Connes conjecture (BC) asserts that two K-groups associated to $G$ are naturally 
isomorphic. The right hand side, or analytic side, of the conjecture is the K-theory of 
the reduced $C^*$-algebra $C_r^* (G)$. The left hand side (or topological side) 
$K_*^\top (G)$ is the equivariant K-homology of a universal space for proper $G$-actions. 
Baum and Connes \cite{BaCo} conjectured that the assembly map
\[
\mu : K_*^\top (G) \to K_* (C_r^* (G)) 
\]
is an isomorphism. 

More general versions of the conjecture include coefficient algebras. For a given 
$G$-$C^*$-algebra $B$ one can replace $C_r^* (G)$ by the reduced crossed product $C_r^* (G,B)$.
The topological side is given in \cite[Definition 9.1]{BCH} in terms of Kasparov
KK-theory \cite{Kas}. Namely, if $X$ is a universal space for proper $G$-actions, then
\begin{equation}\label{eq:1.4}
K_*^\top (G,B) = K_*^G (X,B) = \lim_{X' \subset X ,\, X' / G \text{ compact}} KK_*^G (C_0 (X'), B) .
\end{equation}
The assembly map $\mu$ admits a natural generalization to this context \cite[(9.2)]{BCH}.
The following is known as the Baum--Connes conjecture with coefficients (BCC).

\begin{conjintro}\label{conj:1}
Let $G$ a second countable locally compact group and let $B$ be any $G$-$C^*$-algebra.
Then the assembly map
\[
\mu^B : K_*^G (X,B) \to K_* (C_r^* (G,B)) 
\]
is an isomorphism (of abelian groups).
\end{conjintro}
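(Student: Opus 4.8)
The plan is to attack Conjecture~\ref{conj:1} via the Dirac--dual-Dirac method, which is the only general technique capable of proving that the assembly map is an isomorphism for \emph{all} coefficient algebras $B$ simultaneously. The central object is Kasparov's $\gamma$-element $\gamma \in KK^G_0 (\C, \C)$, a canonical idempotent assembled from a ``Dirac'' class $D$ and a ``dual-Dirac'' class $\eta$ attached to a proper $G$-algebra. Once such a $\gamma$ exists, the abstract machinery of Kasparov and Tu shows that $\mu^B$ is \emph{split injective} for every $B$, and that $\mu^B$ is an isomorphism for every $B$ as soon as $\gamma = 1$ in $KK^G_0 (\C,\C)$. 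Thus the entire problem for a fixed $G$ reduces to two tasks: (i) construct a $\gamma$-element, and (ii) prove $\gamma = 1$. Since the target is the conjecture for \emph{arbitrary} coefficients, everything hinges on verifying these two statements at the level of $KK^G (\C,\C)$, which then propagates to all $B$ automatically.

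First I would set up the permanence properties that reduce the general statement to structurally simple building blocks. Concretely: passage to closed subgroups (restricting the $\gamma$-element along $H \hookrightarrow G$ and using compatibility of assembly with induction, as in Chabert--Echterhoff), stability under extensions $1 \to N \to G \to Q \to 1$ (a two-out-of-three principle: if $N$ and $Q$ admit $\gamma$-elements equal to $1$, so does $G$), and continuity under increasing unions $G = \bigcup_n G_n$ of open subgroups (so that both sides of $\mu^B$ commute with the relevant inductive limits). These reductions let one concentrate on groups carrying a rich geometry.

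The geometric input for step (ii) comes from a proper isometric action of $G$ on a space of non-positive curvature, and I would draw on two complementary sources. When $G$ acts properly by isometries on a finite-dimensional CAT(0) space, a bolic space, or an affine building, the Kasparov--Skandalis construction produces $\gamma$ and proves $\gamma = 1$. When instead $G$ is a-T-menable---in particular whenever $G$ is amenable---the Higson--Kasparov proper action on an infinite-dimensional Hilbert space yields $\gamma = 1$ directly. The intended route to Conjecture~\ref{conj:1} is to feed these two inputs through the permanence properties above, so that the isomorphism for all $B$ descends to every group in sight.

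The hard part---and the point at which I expect the argument to break, consistent with the gap flagged in the abstract---is precisely the identity $\gamma = 1$ for groups that are neither a-T-menable nor geometrically non-positively curved. Kazhdan's property (T) is the fundamental obstruction: for such groups one can have $\gamma \neq 1$, and the \emph{coefficient} version is genuinely delicate, because a pathological $G$-$C^*$-algebra $B$ can detect the failure of $\gamma$ to act as the identity on $K_* (C^*_r (G,B))$ even when the trivial-coefficient map is an isomorphism. Any honest proof of the fully general statement must therefore either exhibit a mechanism forcing $\gamma = 1$ in the presence of property (T), or control the behaviour of $\gamma \otimes_{\C} \mr{id}_B$ for \emph{arbitrary} $B$ by hand---and it is exactly in this last verification, that the $\gamma$-element acts correctly on every coefficient algebra, that Lemma~2.2.a would have to do its work.
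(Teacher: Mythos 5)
You set out to prove Conjecture \ref{conj:1} itself, but the paper contains no proof of this statement and cannot: as its introduction recalls, Higson--Lafforgue--Skandalis constructed counterexamples to BCC (all involving non-exact groups), so the conjecture in the stated generality is false and any blind proof attempt is doomed from the start. What the paper actually proves is BCC for specific classes of groups (Theorem \ref{thm:3.2} and its descendants). Even within that realistic scope, your core plan --- establish $\gamma = 1$ in $KK_0^G(\C,\C)$ and propagate it through permanence properties --- is exactly the route the paper explains is unavailable: directly after \eqref{eq:1.1} it notes that $\gamma = 1$ is known to be false for many groups acting on CAT(0)-spaces. Your dichotomy ``a-T-menable, or proper action on a building/bolic space, implies $\gamma = 1$'' fails in its second branch: higher-rank $p$-adic groups have property (T) and act properly on their affine buildings, yet $\gamma \neq 1$ there. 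For such actions Kasparov--Skandalis give only that $\gamma$ is an idempotent and that $\mu^B$ is injective with image $\gamma \cdot K_*(C_r^*(G,B))$ (Theorem \ref{thm:1.1}) --- nothing forces $\gamma$ to be the identity.

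The paper's actual mechanism sidesteps $\gamma = 1$ entirely via Lafforgue's Banach KK-theory: for every unconditional completion $\mathcal{A}(G)$ the image of $\gamma$ in $\mathrm{End}\bigl( K_*(\mathcal{A}(G,B)) \bigr)$ is $1$ and the Banach assembly map $\mu^B_{\mathcal{A}(G)}$ is bijective (Theorem \ref{thm:1.3}), so the whole problem becomes the analytic one of comparing $K_*(\mathcal{A}(G,B))$ with $K_*(C_r^*(G,B))$ (Corollary \ref{cor:1.5}). The genuinely new ingredient is the Vign\'eras-style Fr\'echet algebra $V_\ell^\infty(G,B)$ of rapidly decreasing functions, holomorphically closed in $C_r^*(G,B)$ (Theorem \ref{thm:2.4}, Corollary \ref{cor:3.1}), whose twisted $K$-biinvariant elements are fitted into Lafforgue's completion $\mathcal{S}_t(G,B)$ (Lemma \ref{lem:2.1}), yielding surjectivity of $K_*(\mathcal{S}_t(G,B)) \to K_*(C_r^*(G,B))$. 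Your closing paragraph correctly senses that the flagged gap at ``Lemma 2.2.a'' is where a general-coefficients argument must do real work, but you misidentify its content: it has nothing to do with controlling $\gamma \otimes_{\C} \mathrm{id}_B$. It is Lemma \ref{lem:2.6}.a, the purely analytic inclusion $e_K V_\ell^\infty (G,B) e_K \subset e_K S_\ell^\infty (G,B) e_K$, and it breaks for nontrivial $B$ because the Hilbert $C^*$-module norm on $L^2(G,B)$ (built from the $B$-valued inner product) is not comparable to the scalar weighted $L^2$-norms $\nu_r$; hence boundedness of the iterated commutators $D^n(\lambda(a))$ does not yield finiteness of $\nu_r(a)$, which is precisely the incompleteness the abstract announces.
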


In case Conjecture \ref{conj:1} holds whenever $B$ is algebra the compact operators on 
some separable Hilbert space, we say that $G$ satisfies the twisted Baum--Connes conjecture.

For an introduction to BC(C) for discrete groups we refer to the very readable booklet \cite{Val}.
Major advantages of Conjecture \ref{conj:1} over the ordinary BC (with coefficients $\C$) are that
BCC is inherited by closed subgroups \cite{ChEc2} and by extensions with amenable groups \cite{CEO}.
In view of these properties (which make ample use of Kasparov's KK-theory) we sometimes
assume in our proofs that our coefficient algebras are separable. But this is actually no
restriction, because for exact groups BC with separable coefficients implies BC with arbitrary
coefficients (Corollary \ref{cor:A.2}).

The main goal of this paper is to verify Conjecture \ref{conj:1} for groups of the form 
$\mc G (R)$, where $\mc G$ is a linear algebraic group and $R$ is some topological ring over
which $\mc G$ is defined. Some (but not too many) of these groups are compact or have the 
Haagerup property, see \cite[\S 1.4]{Jul2}.

Conjecture \ref{conj:1} is already known in the following cases:
\begin{itemize}
\item compact groups \cite{Jul},
\item amenable groups and more generally groups with the Haagerup property \cite{HiKa}.
\end{itemize}
On the other hand, several counterexamples to BCC have been worked out, see \cite{HLS}
All these counterexamples involve non-exact groups \cite{BGW}, so it might be more prudent to 
state Conjecture \ref{conj:1} only for exact groups. Fortunately all the groups we encounter
in this paper are exact (that follows from \cite{KiWa}), so this issue need not bother us.

The ordinary Baum--Connes conjecture is known for larger classes of groups than BCC. 
In particular it has been shown for almost connected locally compact groups \cite{CEN},
for linear algebraic groups over $p$-adic fields \cite{CEN} and for reductive groups
over local fields \cite{Laf}. We point out that for some linear algebraic 
groups over local fields of positive characteristic BC was open (but see below). We refer
to \cite{ELN} for detailed investigation of such groups.

Now we come to our main results.

\begin{thmintro}\label{thm:3} 
Let $F$ be a non-archimedean local field and let $\mc G$ be a linear algebraic group
defined over $F$. Endow $\mc G (F)$ with the topology coming from the metric of $F$.
Then $\mc G (F)$ and all its closed subgroups satisfy BCC.
\end{thmintro}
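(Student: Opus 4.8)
The plan is to combine the two permanence properties of BCC recalled above with the structure theory of linear algebraic groups, reducing everything to the reductive case. Since BCC passes to closed subgroups \cite{ChEc2}, it suffices to prove that $G := \mc G (F)$ itself satisfies BCC; the assertion for every closed subgroup is then automatic. Moreover $G$ is exact by \cite{KiWa}, so by Corollary \ref{cor:A.2} I may assume throughout that the coefficient algebra $B$ is separable, which is precisely what lets me invoke the Kasparov-theoretic permanence results.

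Next I would run a d\'evissage along the canonical normal series of $\mc G$. The identity component $\mc G^\circ$ is open and closed in $\mc G$, and $G / \mc G^\circ (F)$ injects into the finite group $(\mc G / \mc G^\circ)(F)$; thus $\mc G^\circ (F)$ is a normal subgroup of finite (hence amenable) quotient, and the extension property \cite{CEO} reduces the problem to connected $\mc G$. For connected $\mc G$ I pass to the unipotent radical $\mc R_u (\mc G)$, a normal subgroup with reductive quotient. On $F$-points, $\mc R_u (\mc G)(F)$ is a closed normal subgroup of $G$ that sits inside the upper unitriangular matrices after a faithful embedding $\mc G \hookrightarrow \GL_n$, hence is nilpotent and so amenable, while $G / \mc R_u (\mc G)(F)$ embeds as a closed subgroup of the reductive group $(\mc G / \mc R_u (\mc G))(F)$. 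Granting the reductive case, closed-subgroup permanence gives BCC for this quotient, and then the amenable-kernel form of \cite{CEO} gives BCC for $G$.

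This reduces the theorem to the statement that $\mc H (F)$ satisfies BCC for every reductive $\mc H$ over $F$, which is the analytic heart and the step I expect to be the main obstacle. For ordinary Baum--Connes this is \cite{CEN} in the $p$-adic case and \cite{Laf} for reductive groups over arbitrary local fields, via the proper cocompact action of $\mc H (F)$ on its affine Bruhat--Tits building together with a Dirac--dual-Dirac / Banach $KK$-theory argument. The difficulty is that higher-rank semisimple groups have property (T) and therefore fail to have the Haagerup property, so \cite{HiKa} is unavailable; one must instead upgrade the Connes--Kasparov / Lafforgue machinery so that it runs with an arbitrary separable coefficient algebra $B$. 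My expectation is that the action on the building, a finite-dimensional complete $\mathrm{CAT}(0)$ (indeed bolic) space, yields a $\gamma$-element equal to $1$ even in the presence of coefficients; verifying this ``$\gamma = 1$ with coefficients'' is exactly where the real work lies.

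Finally I would flag the positive-characteristic case as a second, genuine obstacle hidden in the d\'evissage of the previous paragraph. When $\mr{char}(F) = p > 0$ there is no Levi decomposition, the $F$-rational unipotent radical may differ from the geometric one, the quotient of $\mc G$ by its $F$-unipotent radical is only \emph{pseudo}-reductive rather than reductive, and the map $G \to (\mc G / \mc R_u (\mc G))(F)$ need not be surjective. I would therefore carry out the reduction purely through the normal series and the permanence of BCC, never appealing to a splitting, and at the bottom invoke the Conrad--Gabber--Prasad structure theory to express a pseudo-reductive group through reductive groups and commutative unipotent pieces over finite extensions, to which the permanence properties apply once more. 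Checking that the amenability and closed-embedding claims survive in this generality is the delicate point, and is the natural candidate for the difficulty already flagged by the author.
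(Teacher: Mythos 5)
Your permanence scaffolding is sound in outline but both overbuilt and, at one point, aimed at a dead end. On the reduction: the paper never runs your d\'evissage through $\mc G^\circ$ and the unipotent radical. It simply observes that a linear algebraic group over $F$ embeds as a closed subgroup of $\GL_n (F)$, so once BCC is known for the single connected reductive group $\GL_n (F)$ (Theorem \ref{thm:3.2}), the closed-subgroup permanence you correctly quoted (Theorem \ref{thm:5.3}, packaged as Theorem \ref{thm:3.3}) yields the statement for $\mc G (F)$ \emph{and} all its closed subgroups in one stroke, uniformly in all characteristics. All of your worries about Levi decompositions, pseudo-reductive quotients, non-surjectivity of $G \to (\mc G / \mc R_u (\mc G))(F)$ and the Conrad--Gabber--Prasad patch simply evaporate; the amenable-extension theorem (Theorem \ref{thm:A.4}) is used in the paper only to append the extra groups $G'$ in Theorem \ref{thm:3.3}, not to prove the statement at hand. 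So the ``delicate point'' you flagged is not where the difficulty lives.

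The genuine gap is in the reductive case, which you leave open and for which your proposed mechanism cannot work. You expect to show ``$\gamma = 1$ with coefficients,'' but for higher-rank groups this is false already without coefficients: property (T) obstructs $\gamma = 1$ in $KK_0^G (\C,\C)$, and the paper says explicitly that this equality ``is known to be false for many'' groups acting on CAT(0)-spaces. Note also that upgrading Lafforgue's machinery to coefficients is not the issue: Theorem \ref{thm:1.3} already holds for arbitrary $G$-$C^*$-algebras $B$ and any unconditional completion $\mc A (G)$, and Theorem \ref{thm:1.1} (Kasparov--Skandalis) gives injectivity of $\mu^B$ with image $\gamma \cdot K_* (C_r^* (G,B))$ with coefficients. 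The actual work, for which your proposal offers no substitute, is the analytic comparison of $K_* (\mc S_t (G,B))$ with $K_* (C_r^* (G,B))$: since $\mc S_t (G,B)$ is not holomorphically closed in $C_r^* (G,B)$ for general $B$, the paper introduces the Fr\'echet algebra $V_\ell^\infty (G,B)$ of $\ell$-rapidly decreasing elements (following Vign\'eras \cite{Vig}), proves it is spectrally invariant in $C_r^* (G,B)$ (Theorem \ref{thm:2.4}, Lemma \ref{lem:2.6}, Corollary \ref{cor:3.1}), and uses Cartan-decomposition estimates to place the corners $e_K V_\ell^\infty (G,B) e_K$ inside $\mc S_t (G,B)$ (Lemma \ref{lem:2.1}); then every class of $K_* (C_r^* (G,B))$ lifts to the unconditional completion and Corollary \ref{cor:1.5} closes the loop. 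Incidentally, the difficulty the author himself flags sits exactly in this bridge --- the abstract points to Lemma \ref{lem:2.6}.a, where the Hilbert-module norm on $L^2 (G,B)$ need not agree with the scalar-valued norm $\nu_0$ --- and not in the group-theoretic reduction you identified as the main risk.
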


For all groups in Theorem \ref{thm:3}, the injectivity of the assembly map
$\mu^B$ is due to Kasparov and Skandalis \cite{Kas,KaSk}. The surjectivity of the analogous
map in the context of Banach KK-theory was shown by Lafforgue \cite{Laf}. In particular
there exists a certain unconditional completion $\mc S_t (\mc G (F),B)$ of 
$C_c (\mc G (F),B)$ such that
\begin{equation}\label{eq:L1}
\mu^B_{\mc S_t (\mc G (F)} : K_*^{\mc G (F)} (X,B) \to K_* (\mc S_t (\mc G (F),B))
\end{equation}
is a bijection. See Section \ref{sec:1} for a discussion of these methods.

Our new idea is to consider algebras of rapidly decreasing functions from $\mc G (F)$ 
to a coefficient algebra $B$. We require that these functions decrease (in the 
$L^2$-sense) more rapidly than $\ell^n$ for all $n \in \Z$, where 
$\ell : \mc G (F) \to \R$ is a length function coming from the action of $\mc G (F)$ 
on its Bruhat--Tits building $X$. The crucial point is that, with a generalization of 
results of Vign\'eras \cite{Vig}, one can find such a Fr\'echet algebra which is dense 
and holomorphically closed in $C_r^* (\mc G (F),B)$ (Theorem \ref{thm:2.4}), and 
contains many elements of $\mc S_t (\mc G (F),B)$. Together with \eqref{eq:L1} this 
enables us to establish the surjectivity of $\mu^B$. \\

We also consider linear algebraic groups $\mc G$ defined over a global field $k$
(with the discrete topology). Recall that the adelic group $\mc G (\mb A_k)$ contains 
$\mc G (k)$ as a discrete subgroup.

\begin{thmintro}\label{thm:5} (see Theorem \ref{thm:4.3}) \\ 
Suppose that for every infinite place $v$ of $k$ the Lie group $\mc G (k_v)$ is amenable.
Then the groups $\mc G (\mb A_k)$ and $\mc G (k)$, as well as all their closed subgroups,
satisfy the Baum--Connes conjecture with arbitrary coefficients.
\end{thmintro}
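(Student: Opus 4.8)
The plan is to deduce the theorem entirely from the non-archimedean local case (Theorem \ref{thm:3}) together with the known permanence properties of BCC, via the adelic structure of $\mc G(\mb A_k)$. First I would reduce to the single group $\mc G(\mb A_k)$: since $\mc G(k)$ is discrete, hence closed, in $\mc G(\mb A_k)$, every closed subgroup of $\mc G(k)$ is also closed in $\mc G(\mb A_k)$, so all of the groups named in the statement are closed subgroups of $\mc G(\mb A_k)$, and by the subgroup permanence \cite{ChEc2} it suffices to establish BCC for $\mc G(\mb A_k)$ itself. Recall that $\mc G(\mb A_k)$ is the restricted direct product of the local groups $\mc G(k_v)$ with respect to the compact open subgroups $\mc G(\mc O_v)$ at the non-archimedean places. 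For a finite set $S$ of places containing all archimedean ones, set
\[
G_S \;=\; \prod_{v \in S} \mc G(k_v) \;\times\; \prod_{v \notin S} \mc G(\mc O_v),
\]
an open subgroup of $\mc G(\mb A_k)$, so that $\mc G(\mb A_k) = \varinjlim_S G_S$ is the directed union of the $G_S$. Both sides of the assembly map are continuous for increasing unions of open subgroups: the left side by the direct-limit description \eqref{eq:1.4} of $K_*^G(X,B)$ over cocompact $X'$, and the right side because a compactly supported function lies in some $C_c(G_S,B)$, whence $C_c(\mc G(\mb A_k),B) = \bigcup_S C_c(G_S,B)$ realises $C_r^*(\mc G(\mb A_k),B)$ as the $C^*$-inductive limit of the $C_r^*(G_S,B)$. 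It is therefore enough to prove BCC for every $G_S$.

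Next I would exploit the product structure of $G_S$, writing $G_S = N_S \times A_S$ with
\[
N_S = \prod_{v \in S,\ v \nmid \infty} \mc G(k_v), \qquad A_S = \prod_{v \in S,\ v \mid \infty} \mc G(k_v) \times \prod_{v \notin S} \mc G(\mc O_v).
\]
The group $A_S$ is amenable: each archimedean factor $\mc G(k_v)$ is amenable by hypothesis, the factor $\prod_{v\notin S}\mc G(\mc O_v)$ is compact by Tychonoff and hence amenable, and a finite product of amenable locally compact groups is again amenable. Viewing $G_S$ as the (split, direct-product) extension $1 \to N_S \to G_S \to A_S \to 1$ with amenable quotient $A_S$, the amenable-extension permanence \cite{CEO} reduces BCC for $G_S$ to BCC for the finite product $N_S$ of non-archimedean local groups.

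It then remains to prove BCC for $N_S$, each factor of which satisfies BCC by Theorem \ref{thm:3}. Here I would invoke permanence of BCC under finite direct products: if $G_1$ and $G_2$ both satisfy Conjecture \ref{conj:1} for all coefficients, then so does $G_1 \times G_2$. The mechanism is the iterated crossed-product identification $C_r^*(G_1 \times G_2, B) \cong C_r^*\bigl(G_1,\, C_r^*(G_2,B)\bigr)$ — valid because $G_1$ commutes with $G_2$ and acts on $B$, hence acts on $C_r^*(G_2,B)$ — combined with the compatibility of the assembly map with this decomposition: BCC for $G_2$ with coefficients $B$ and BCC for $G_1$ with the auxiliary coefficients $C_r^*(G_2,B)$ together yield BCC for $G_1 \times G_2$ with coefficients $B$. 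Iterating over the finitely many factors of $N_S$ finishes the argument. Since all the groups that occur are exact (by \cite{KiWa}), I may run the entire argument with separable $B$, where the Kasparov-theoretic permanence results apply, and recover arbitrary coefficients from Corollary \ref{cor:A.2}.

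The main obstacle is precisely this last product-permanence step. While the iterated crossed-product decomposition is formal, the hard part will be verifying that the assembly map respects it — that the composite of the two partial assembly maps equals the assembly map for the product, as an isomorphism of $K$-groups — which requires the descent and naturality machinery of Kasparov theory and is where the full strength of the \emph{coefficient} version is indispensable (one needs BCC for $G_1$ not only for $B$ but for the auxiliary algebra $C_r^*(G_2,B)$). I would handle it through the functoriality circle of Chabert--Echterhoff, of which the amenable-extension statement \cite{CEO} and the subgroup statement \cite{ChEc2} already used above are two facets.
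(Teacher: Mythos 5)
Your proposal is correct in substance and follows essentially the same skeleton as the paper's proof (Theorem \ref{thm:4.3} together with Corollary \ref{cor:4.6}): realize $\mc G (\mb A_k)$ as an increasing union of open subgroups $G_S$ coming from the restricted product, prove BCC for each $G_S$ using Theorem \ref{thm:3} at the non-archimedean places and amenability at the remaining factors, work with separable coefficients throughout and lift to arbitrary coefficients via exactness (Corollary \ref{cor:A.2}), then descend to $\mc G (k)$ and all closed subgroups by subgroup permanence (Theorem \ref{thm:5.3}, from \cite{ChEc2}). You deviate at two points. First, where you split $G_S = N_S \times A_S$ and absorb the amenable factor $A_S$ by the extension permanence of \cite{CEO}, the paper instead applies the finite-product permanence \cite[Theorem 3.17.i]{ChEc2} to all factors of $G_S$ at once (the compact factor and the archimedean factors satisfy BC with separable coefficients by \cite{Jul} and \cite{HiKa}); your plan to re-derive product permanence by hand, via the iterated crossed product $C_r^*(G_1 \times G_2, B) \cong C_r^*\bigl(G_1, C_r^*(G_2,B)\bigr)$ and the compatibility of partial assembly maps, is exactly the content of the cited Chabert--Echterhoff result, and you correctly identify it as the hard step --- but it should be cited, not reproved, and your extension trick does not let you avoid it, since $N_S$ is still a nontrivial finite product. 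Second, your continuity claim for the topological side is sketchier than is safe: it is \emph{not} a formal consequence of \eqref{eq:1.4}, whose limit runs over cocompact subsets of a universal space for the \emph{fixed} group, not over an exhausting family of open subgroups; the statement you need is precisely the local-global principle of \cite[Theorem 1.1]{BMP2}, quoted in the paper as Theorem \ref{thm:4.2}, and you should invoke it rather than treat it as immediate.

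One slip to correct: you present the reduction as the extension $1 \to N_S \to G_S \to A_S \to 1$ ``with amenable quotient $A_S$'', but the permanence theorem of \cite{CEO} (Theorem \ref{thm:A.4} in the paper) requires the \emph{amenable} group to be the closed \emph{normal subgroup}, with the quotient satisfying BC with separable coefficients. As stated, your application is mis-oriented. Because $G_S$ is a direct product, $A_S$ is itself normal with $G_S / A_S \cong N_S$, so the fix is a one-line relabelling --- take $A_S$ as the amenable normal subgroup and $N_S$ as the quotient --- but in a non-split situation this orientation matters, so it is worth being precise. With these two repairs (cite \cite[Theorem 1.1]{BMP2} and \cite[Theorem 3.17.i]{ChEc2} for the steps you sketch, and flip the extension), your argument is a valid proof and is, modulo the cosmetic regrouping of the amenable factors, the paper's own.
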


Of course there are a lot of interesting closed subgroups of $\mc G (\mb A_k)$ or $\mc G (k)$,
far too many to list here. Suffice it to refer to \cite{PlRa} and the references therein.

It would be very nice if our method to prove Theorem \ref{thm:3} could be adjusted to a 
real reductive algebraic group $G$. We tried this, but so far it did not work out.
One problem is that rapid decay in an $L^2$-sense does in general not imply rapid decay 
in an $L^\infty$-sense. This makes it difficult to fit an algebra of rapidly decreasing
functions on $G$ in a unconditional completion (in the sense of \cite{Laf}).

As far as arbitrary algebraic groups over $\R$ are involved, our current methods do suffice
to show that $\mc G (\mb A_k)$ satisfies the twisted Baum--Connes conjecture 
(Theorem \ref{thm:4.5}). For this no condition at the archimedean places of $k$
(as in Theorem \ref{thm:5}) is needed.

Finally, we mention one well-known consequence of the surjectivity of the assembly map. 
Kadison and Kaplansky conjectured that, for a torsion--free discrete group $\Gamma$, the 
reduced $C^*$-algebra $C_r^* (\Gamma)$ contains no non-trivial idempotents. As explained
in \cite[\S 7]{BCH} and \cite[\S 6.3]{Val}, this can be deduced from the surjectivity of
\[
\mu : K_0^\top (\Gamma) \to K_0 (C_r^* (\Gamma)) .
\]
From Theorems \ref{thm:3} and \ref{thm:5} we get:

\begin{thmintro}\label{thm:6}
Let $G$ be a group as in Theorem \ref{thm:3} or Theorem \ref{thm:5}.
Let $\Gamma$ be a torsion-free subgroup of $G$ which is discrete in the subspace topology. 
Then $C_r^* (\Gamma)$ contains no idempotents other than 0 and 1.
\end{thmintro}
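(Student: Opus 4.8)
The plan is to deduce the idempotent theorem from the surjectivity of the ordinary Baum--Connes assembly map for $\Gamma$, by pairing $K_0(C_r^*(\Gamma))$ against its canonical trace. First I would bring $\Gamma$ under the main theorems. Since $\Gamma$ is discrete in the subspace topology it is locally compact, and a discrete subgroup of a Hausdorff topological group is automatically closed (a standard fact: choosing a neighbourhood $U$ of $e$ with $U \cap \Gamma = \{e\}$ and a symmetric $V$ with $VV \subseteq U$, every translate $gV$ meets $\Gamma$ in at most one point, which together with the Hausdorff axiom forces $\overline{\Gamma} = \Gamma$). Hence $\Gamma$ is a closed subgroup of $G$, so Theorem \ref{thm:3} or Theorem \ref{thm:5} applies and $\Gamma$ satisfies BCC. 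Specialising the coefficient algebra to $\C$, the assembly map
\[
\mu : K_0^\top(\Gamma) \to K_0(C_r^*(\Gamma))
\]
is an isomorphism, in particular surjective, which is all I shall use.

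Next I would run the trace argument of \cite[\S 7]{BCH} and \cite[\S 6.3]{Val}. Let $\tau$ be the canonical trace on $C_r^*(\Gamma)$, determined on the group algebra by $\tau(\sum_g c_g g) = c_e$; it is faithful, positive, and normalised by $\tau(1)=1$, and it induces a homomorphism $\tau_* : K_0(C_r^*(\Gamma)) \to \R$. The crucial input is the trace theorem: the image of $\tau_* \circ \mu$ lies in the additive subgroup of $\Q$ generated by the numbers $|F|^{-1}$, where $F$ runs over the finite subgroups of $\Gamma$. Since $\Gamma$ is torsion-free its only finite subgroup is trivial, so this subgroup equals $\Z$; combined with the surjectivity of $\mu$ this yields $\tau_*(K_0(C_r^*(\Gamma))) \subseteq \Z$.

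Finally I would rule out non-trivial idempotents. An idempotent $e \in C_r^*(\Gamma)$ is similar, by an invertible element of $C_r^*(\Gamma)$, to a projection $p$ with $[p]=[e]$ in $K_0$; since conjugation by an invertible fixes $0$ and $1$, we have $p \in \{0,1\}$ if and only if $e \in \{0,1\}$. For the projection $p$, faithfulness and positivity of $\tau$ give $\tau(p) > 0$ unless $p=0$, and the same applied to the projection $1-p$ gives $\tau(p) < 1$ unless $p=1$. Thus an idempotent $e \notin \{0,1\}$ would force $0 < \tau(p) = \tau_*[p] = \tau_*[e] < 1$, contradicting $\tau_*[e] \in \Z$. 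Hence $0$ and $1$ are the only idempotents of $C_r^*(\Gamma)$.

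The analytically deep ingredient, the surjectivity of $\mu$, is already provided by Theorems \ref{thm:3} and \ref{thm:5}, so everything else is formal. I therefore expect the integrality of the trace to be the only step where the hypotheses genuinely bite: dropping torsion-freeness would enlarge the range of $\tau_*$ to the subgroup of $\Q$ generated by the $|F|^{-1}$, and the strict inequalities $0 < \tau(p) < 1$ would then no longer yield a contradiction.
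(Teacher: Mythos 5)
Your proposal is correct and follows essentially the same route as the paper, which deduces the statement from the surjectivity of $\mu$ (supplied by Theorems \ref{thm:3} and \ref{thm:5} applied to the closed subgroup $\Gamma$) together with the standard trace argument of \cite[\S 7]{BCH} and \cite[\S 6.3]{Val}. The only details you add beyond the paper's citations --- that a discrete subgroup of a Hausdorff group is closed, and the similarity of idempotents to projections --- are exactly the standard facts the cited argument relies on.
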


{\bf Acknowledgments}. We thank Kang Li, Siegfried Echterhoff and Vincent Lafforgue for their 
helpful comments and discussions.

\section{The methods of Kasparov and Lafforgue}
\label{sec:1}

In this section we recall important previous results about the Baum--Connes conjecture
for groups acting on suitable metric spaces. 

Let $G$ be a locally compact group, always tacitly assumed to be Hausdorff and second countable. 
We first suppose that $G$ acts properly and isometrically on an affine building $X$ in the sense of 
\cite{BrTi1,Tit}. These assumptions include that the action is continuous (as usual for topological 
group actions).

Bruhat and Tits showed that $X$ is a CAT(0)-space \cite[3.2.1]{BrTi1}, that it has unique 
geodesics \cite[2.5.13]{BrTi1} and that every compact subgroup of $G$ fixes a point of 
this affine building \cite[Proposition 3.2.4]{BrTi1}. By \cite[Proposition 1.8]{BCH} 
these properties guarantee that $X$ is a universal space for proper $G$-actions. In
particular the domain of the Baum--Connes assembly map becomes $K_*^\top (G) = K_*^G (X)$.

In this section (but not after that) we also consider complete simply connected Riemannian
manifolds with nonpositive sectional curvature which is bounded from below and has
bounded covariant derivative. When a locally compact group $G$ acts properly and
isometrically on such a space $X$, the same arguments as for affine buildings ensure
that $X$ is a universal space for proper $G$-actions. Then \eqref{eq:1.4} is again valid.
Typical examples are symmetric spaces associated to reductive Lie groups.

Another kind of metric spaces to which the results of this section apply are called "bolic" 
\cite{KaSk2}. More precisely, we fix $\delta \in \R_{>0}$ and we let $(X,d)$ be a metric
space such that:
\begin{itemize}
\item It is uniformly locally finite.
\item $(X,d)$ is weakly $\delta$-geodesic \cite[Definition 2.1]{KaSk2}.
\item $(X,d)$ satisfies \cite[(B2')]{KaSk2} for the given $\delta$ and satisfies
condition \cite[(B1)]{KaSk2} for all $\delta' > 0$.
\end{itemize}
By the local finiteness, $X$ is discrete as a topological space. 
If a locally compact group $G$ acts properly and isometrically on $X$, then $X$ cannot 
be a universal example for proper $G$-actions (unless $G$ is compact), because it
is discrete. 

From now on we assume that $G$ acts properly and isometrically on a space $X$ of one of
the above three kinds. With the dual Dirac method Kasparov and Skandalis \cite{KaSk} 
constructed an extremely useful element $\gamma \in KK^G_0 (\C,\C)$.
Via the descent map 
\[
KK^G_0 (\C,\C) \to KK_0 (C_r^* (G),C_r^* (G))
\]
and the product in KK-theory, $\gamma$ gives rise to an endomorphism of $K_* (C_r^* (G))$.
More generally, for every $\sigma$-unital $G$-$C^*$-algebra $B$, $\gamma$ determines an 
element of $\mr{End} \big( K_* (C_r^* (G,B)) \big)$ \cite[\S 3.12]{Kas}.

\begin{thm}\label{thm:1.1} \textup{\cite{Kas,KaSk,KaSk2}} \\
Let $G$ and $X$ be as above. 
\enuma{
\item $\gamma$ is idempotent in $KK_0^G (\C,\C)$.
\item For every $\sigma$-unital $G$-$C^*$-algebra $B$, the assembly map
\[
\mu^B : K_*^G (X,B) \to K_* (C_r^* (G,B))
\]
is injective and has image $\gamma \cdot K_* (C_r^* (G,B))$.
}
\end{thm}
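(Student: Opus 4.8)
The plan is to prove this theorem by invoking the machinery of the Kasparov $\gamma$-element and its functoriality with respect to coefficient algebras. The key object is the element $\gamma \in KK^G_0(\C,\C)$ constructed by the dual Dirac method of Kasparov and Skandalis. This element is built as a Kasparov product $\gamma = \beta \otimes \alpha$, where $\alpha \in KK^G(C_0(X,\mr{Cliff}(TX)), \C)$ is a Dirac-type element arising from the Clifford algebra bundle over the nonpositively curved space $X$, and $\beta \in KK^G(\C, C_0(X,\mr{Cliff}(TX)))$ is the dual-Dirac element constructed using the geodesic structure; the existence of $\beta$ crucially uses that $X$ is CAT(0) with unique geodesics, so that the exponential map gives a continuous $G$-equivariant retraction data.

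\begin{proof}[Proof sketch]
For part (a), I would show $\gamma$ is idempotent by verifying the identity $\gamma \otimes_\C \gamma = \gamma$ in $KK^G_0(\C,\C)$. The strategy is the classical one: one proves $\alpha \otimes_\C \beta = 1_{C_0(X,\mr{Cliff}(TX))}$ is \emph{not} generally true, but the key relation is that $\beta \otimes \alpha$ is idempotent because the composite $\alpha \otimes \beta \otimes \alpha = \alpha$ up to homotopy. Concretely, the homotopy comes from the rescaling of the geodesic contraction toward a chosen basepoint combined with the fact that, since $X$ is CAT(0), the geodesic homotopy provides a $G$-equivariant homotopy realizing the required identity. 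The uniform local finiteness (in the bolic case) or the curvature bounds (in the manifold case) are exactly what guarantee the relevant operators are properly supported and the Kasparov products are well-defined.

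For part (b), I would apply the descent homomorphism $j_r^B : KK^G_*(A,B) \to KK_*(A \rtimes_r G, B \rtimes_r G)$ together with the naturality of the assembly map. The image of $\mu^B$ is identified with $\gamma \cdot K_*(C_r^*(G,B))$ by the following mechanism: the dual-Dirac element $\beta$ factors the assembly map, in the sense that $\mu^B$ is split-injective with the splitting provided by the map induced by $\beta$ through descent, while $\gamma$ acts as the composite projection onto the image. Injectivity follows because $\beta$ yields a one-sided inverse to $\mu^B$ (this is the split-injectivity already due to Kasparov--Skandalis), and the computation of the image as $\gamma \cdot K_*(C_r^*(G,B))$ follows from part (a) together with the fact that the endomorphism of $K_*(C_r^*(G,B))$ induced by $\gamma$ factors through $K_*^G(X,B)$ via $\mu^B$. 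The generalization from $B = \C$ to arbitrary $\sigma$-unital $B$ is handled by Kasparov's $\S 3.12$ of \cite{Kas}, which provides the needed functoriality and the extension of the Kasparov products to the coefficient setting.

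The main obstacle I expect is the careful construction and analysis of the dual-Dirac element $\beta$ uniformly across the three quite different geometric settings (affine buildings, nonpositively curved Riemannian manifolds, and bolic spaces). In the manifold case $\beta$ uses genuine Clifford algebra analysis and the exponential map; in the building and bolic cases, where $X$ is discrete, one instead works with a combinatorial analogue and must verify the axioms of \cite{KaSk2} to get a well-defined $\gamma$. Reconciling these constructions so that the same abstract statement holds in all cases, and checking that the properness of the $G$-action ensures all the operators involved have the right support and compactness properties, is the technical heart of the argument. Since this theorem is quoted directly from \cite{Kas,KaSk,KaSk2}, the rigorous work has been carried out there, and the role here is to package these results in the precise form needed for the subsequent sections.
\end{proof}
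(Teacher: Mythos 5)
Your proposal is correct and takes essentially the same route as the paper: Theorem \ref{thm:1.1} is stated there without proof, quoted directly from \cite{Kas,KaSk,KaSk2}, and your sketch is a faithful outline of the Dirac/dual-Dirac argument carried out in those references (idempotence of $\gamma = \beta \otimes \alpha$ via $\alpha \otimes \beta \otimes \alpha = \alpha$, split injectivity of $\mu^B$ via descent of $\beta$, and the identification of the image with $\gamma \cdot K_*(C_r^*(G,B))$). Since the paper's ``proof'' is the citation itself, there is nothing to compare beyond noting that your packaging of the three geometric settings matches how the paper uses the result.
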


Thus BC for $G$ with coefficients $B$ becomes equivalent to: 
\begin{equation}\label{eq:1.1}
\text{the image of } \gamma \text{ in } \mr{End} \big( K_* (C_r^* (G,B)) \big)
\text{ is the identity.}
\end{equation}
Obviously \eqref{eq:1.1} would be implied by $\gamma = 1 \in KK_0^G (\C,\C)$. 
Although that statement has indeed been proven for some groups acting on 
CAT(0)-spaces, it is known to be false for many others.

This is where Lafforgue's work \cite{Laf} comes in. He developed a KK-theory for 
Banach algebras, which admits a natural transformation from Kasparov's KK-theory.
The advantage is that the image of $\gamma$ in $KK_{Ban}^G (\C,\C)$ can be 1 even
if $\gamma \neq 1$ in $KK_0^G (\C,\C)$. On the other hand, it is not known whether
there exists a natural descent map from $KK_{Ban}^G (\C,\C)$ to 
$KK_{Ban}(C_r^* (G),C_r^* (G))$. For such a descent, one rather has to replace 
$C_r^* (G)$ by suitable Banach algebra completions of $C_c (G)$.

\begin{defn}\label{def:1.2}
A norm on $C_c (G)$ (or an a completion thereof) is unconditional if every
$f \in C_c (G)$ has the same norm as its absolute value $|f|$. 

A Banach algebra $\mc A (G)$ containing $C_c (G)$ as dense subalgebra is said to be
an unconditional completion (for $G$) if its norm is unconditional.
\end{defn}

For every unconditional completion $\mc A (G)$ and every $G$-$C^*$-algebra $B$, there
exists a version $\mc A (G,B)$ of the crossed product of $B$ with $G$. Furthermore
Lafforgue exhibited a Banach algebra version
\[
\mu_{\mc A (G)}^B : K_*^\top (G,B) \to K_* (\mc A (G,B))
\]
of the assembly map.

\begin{thm}\label{thm:1.3} \textup{\cite{Laf}} \\
Suppose that a locally compact second countable group $G$ acts properly and 
isometrically on a space $X$ of one of the above three kinds. 
For every $G$-$C^*$-algebra $B$, and every unconditional completion $\mc A (G)$:
\enuma{
\item The image of $\gamma$ in $\mr{End}\big( K_* (\mc A (G,B)) \big)$ is 1.
\item $\mu_{\mc A (G)}^B : K_*^\top (G,B) \to K_* (\mc A (G,B))$ is a bijection.
}
\end{thm}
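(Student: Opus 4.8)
The plan is to carry Kasparov and Skandalis' Dirac--dual-Dirac construction into Lafforgue's Banach KK-theory and to exploit the extra flexibility that the latter provides. Recall that $\gamma$ is a Kasparov product $\gamma = \eta \otimes_A d$, where $A$ is a proper $G$-algebra built from the geometry of $X$ (the Clifford algebra bundle in the Riemannian case, and an analogous algebra for buildings and bolic spaces), $d \in KK_0^G (A,\C)$ is a Dirac-type element, and $\eta \in KK_0^G (\C,A)$ is the dual-Dirac element. By Theorem \ref{thm:1.1}(a) the element $\gamma$ is idempotent, and by the universality of $X$ it already acts as the identity on the topological side $K_*^G (X,B)$. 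Thus everything reduces to showing that $\gamma$ acts as the identity on the analytic side after passing to an unconditional completion; the whole point of leaving the $C^*$-world is that the homotopy realizing $\gamma = 1$, which is obstructed for $C_r^* (G,B)$, becomes available for $\mc A (G,B)$.

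For part (a) I would first transport $d$ and $\eta$ across the natural transformation $KK^G \to KK_{Ban}^G$, and then build an explicit homotopy in $KK_{Ban}$ connecting the image of $\gamma$ to $1$. Geometrically this homotopy is the geodesic ``rotation'' contracting $X$ toward a basepoint: the CAT(0)/bolic structure furnishes a continuous family of almost-equivariant operators interpolating between $\eta \otimes d$ and the identity. The analytic crux is that the matrix coefficients of this family, while not uniformly bounded in the reduced $C^*$-norm, grow at most subexponentially in the length function $\ell$ associated with the action of $G$ on $X$. Since the norm of $\mc A (G)$ is unconditional it sees only the absolute values $|f|$ of the coefficient functions, so the geometric decay estimates are strong enough to keep the entire path bounded for that norm. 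Applying Lafforgue's Banach descent for unconditional completions then pushes the homotopy down to $\mc A (G,B)$ and yields that the image of $\gamma$ in $\End\big( K_* (\mc A (G,B)) \big)$ is the identity.

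For part (b) I would argue as in the $C^*$-picture of Theorem \ref{thm:1.1}, using the Banach assembly map. The dual-Dirac element $\eta$ induces a map $\nu : K_* (\mc A (G,B)) \to K_*^G (X,B)$ in the reverse direction with the property that both composites $\mu_{\mc A (G)}^B \circ \nu$ and $\nu \circ \mu_{\mc A (G)}^B$ equal multiplication by $\gamma$ on the respective side. On $K_*^G (X,B)$ that action is the identity by universality of $X$, and on $K_* (\mc A (G,B))$ it is the identity by part (a). Hence $\mu_{\mc A (G)}^B$ and $\nu$ are mutually inverse, so $\mu_{\mc A (G)}^B$ is a bijection.

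The step I expect to be the main obstacle is the uniform norm control in part (a): one must bound the interpolating elements in the unconditional norm of $\mc A (G,B)$ uniformly along the whole path, and uniformly in the coefficient algebra $B$. This demands sharp decay estimates for the Dirac homotopy phrased in terms of $\ell$, and it is precisely here that the three geometric hypotheses enter most delicately --- through the curvature bounds in the Riemannian case and through conditions \textup{(B1)} and \textup{(B2')} in the bolic case. Verifying that these estimates hold for every unconditional completion simultaneously, and that the $B$-valued crossed products $\mc A (G,B)$ are functorial enough for the descent to apply, is the technical heart of the argument.
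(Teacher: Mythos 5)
First, note that the paper itself gives no proof of Theorem \ref{thm:1.3}: it is quoted from Lafforgue \cite{Laf}, so your proposal must be measured against Lafforgue's actual argument. Your overall architecture --- transport the Dirac/dual-Dirac elements into $KK_{Ban}$, prove that $\gamma$ becomes $1$ there, then descend to unconditional completions and run a $\gamma$-element argument for the assembly map --- is indeed Lafforgue's strategy. But two of your mechanisms are misplaced in ways that matter. In part (a), the homotopy between $\gamma$ and $1$ is \emph{not} controlled by the unconditional norm of $\mc A (G)$, and no uniformity over completions or over the coefficient algebra $B$ is needed (your final paragraph worries about a non-issue). Lafforgue constructs the homotopy once and for all upstairs, in $KK_{Ban}^G (\C,\C)$ with trivial coefficients, using Banach representations of arbitrarily small exponential growth in the length function $\ell$ --- this is exactly the flexibility that the $C^*$-framework forbids. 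Unconditionality enters only afterwards and for a different purpose: it is what makes the descent functor exist, because the norm of the integrated action of a kernel on an $\mc A (G,B)$-module is controlled by the norm of the pointwise-norm function $g \mapsto \norm{f(g)}$, i.e.\ by $|f|$. Once $\gamma \mapsto 1$ in $KK_{Ban}^G (\C,\C)$, functoriality of descent and of the action on $K_*$ gives part (a) for every $\mc A (G)$ and every $B$ simultaneously, with no further estimates.

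The more serious gap is in part (b). You propose an inverse $\nu$ ``induced by $\eta$'' with both composites equal to multiplication by $\gamma$, arguing ``as in the $C^*$-picture.'' But Lafforgue's $KK_{Ban}$ does not carry a general Kasparov composition product, so one cannot simply multiply by $\eta$ and $d$ and invoke $\eta \otimes_A d = \gamma$ on the analytic side; this is precisely the reason the theory acts on $K$-theory only through descent morphisms. The missing ingredient --- and the technical heart of Lafforgue's proof of (b) --- is his theorem that the Banach assembly map $\mu_{\mc A (G)}^{A \otimes B}$ is an isomorphism whenever the coefficient algebra is \emph{proper} (proved via a generalized Green imprimitivity theorem for unconditional completions). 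One then factors the assembly map through $K_*^\top (G, A \otimes B)$ and $K_* (\mc A (G, A \otimes B))$ using the morphisms induced by $\eta$ and $d$, computes the outer composites by $\gamma$ (equal to the identity on the topological side by universality of $X$, and equal to the identity on the Banach side by part (a)), and concludes bijectivity by a diagram chase around the middle isomorphism. Without the proper-algebra isomorphism theorem your argument for (b) does not close, since neither composite involving your putative $\nu$ is even defined inside $KK_{Ban}$.
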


The archetypical example of an unconditional completion is $L^1 (G)$. In that case 
Theorem \ref{thm:1.3} says that
\begin{equation}\label{eq:1.2}
\mu_{L^ 1 (G)}^B : K_*^\top (G,B) \to K_* (L^1 (G,B))
\end{equation}
is an isomorphism for all $G,B$ as above. In general the bijectivity of \eqref{eq:1.2}
is known as the Bost conjecture for $G$ (with coefficients $B$). 

The difference between $K_* (C_r^* (G,B))$ and $K_* (\mc A (G,B))$ is an analytic issue, 
which is our main concern in this paper.

\begin{prop}\label{prop:1.4} \textup{\cite[Proposition 1.6.4]{Laf}} \\
In the setting of Theorem \ref{thm:1.3}, suppose that 
\[
\norm{f}_{\mc A (G)} = \norm{f^*}_{\mc A (G)} \geq \norm{f}_{C_r^* (G)}
\qquad \forall f \in C_c (G) .
\]
Then $C_c (G,B) \to C_r^* (G,B)$ extends to a Banach algebra homomorphism 
$\mc A (G,B) \to C_r^* (G,B)$ and (when $B$ is $\sigma$-unital) the induced map
\[
K_* (\mc A (G,B)) \to K_* (C_r^* (G,B))
\]
commutes with multiplication by $\gamma$.
\end{prop}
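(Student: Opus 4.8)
The plan is to establish the statement in two stages: first construct the Banach algebra homomorphism $\mc A (G,B) \to C_r^* (G,B)$, and then verify that the induced map on K-theory is compatible with the $\gamma$-action. For the first stage I would start from the inequality $\norm{f}_{\mc A (G)} \geq \norm{f}_{C_r^* (G)}$ on $C_c (G)$ and extend it to $B$-valued functions. The reduced crossed product $C_r^* (G,B)$ is the completion of $C_c (G,B)$ in a norm built from the regular representation on $L^2 (G) \otimes H$ for a faithful representation of $B$ on $H$; the unconditional crossed product $\mc A (G,B)$ is defined by $\norm{f}_{\mc A (G,B)} = \norm{(g \mapsto \norm{f(g)}_B)}_{\mc A (G)}$, i.e.\ one applies the unconditional norm to the scalar function $g \mapsto \norm{f(g)}_B$. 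First I would show that for every $f \in C_c (G,B)$ one has $\norm{f}_{C_r^* (G,B)} \leq \norm{f}_{\mc A (G,B)}$; since both sides agree with the scalar case when $B = \C$, and the regular representation of $C_r^* (G,B)$ can be dominated by the corresponding majorant involving $|f|$-type bounds, the hypothesis lifts from scalars to $B$-coefficients. This gives a norm-decreasing, hence continuous, algebra homomorphism on the dense subalgebra $C_c (G,B)$, which therefore extends to $\mc A (G,B) \to C_r^* (G,B)$.

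For the second stage I would invoke the functoriality of the $\gamma$-element. Recall that $\gamma \in KK_0^G (\C,\C)$ acts on $K_* (\mc A (G,B))$ and on $K_* (C_r^* (G,B))$ through Lafforgue's Banach-algebraic descent applied to $\mc A (G)$ and through Kasparov's descent applied to $C_r^* (G)$ respectively. The key point is that both actions arise from the \emph{same} $G$-equivariant element $\gamma$ via a descent construction that is natural in the target Banach algebra. Concretely, the homomorphism $\varphi : \mc A (G,B) \to C_r^* (G,B)$ is induced at the level of crossed products by the identity on $C_c (G,B)$, so it intertwines the two completions of the same convolution algebra. Since the descent of $\gamma$ is defined using Hilbert-module bimodules over these crossed products that are obtained by completing the same $C_c (G)$-level data, $\varphi$ transports the $\mc A$-descent of $\gamma$ to the $C_r^*$-descent of $\gamma$ up to the natural transformation between Banach and $C^*$ KK-theory. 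Passing to K-theory and using that $\varphi_* = K_* (\varphi)$ is a ring-compatible map, I would conclude that $\varphi_*$ commutes with multiplication by $\gamma$.

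I expect the main obstacle to be making the naturality in the second stage fully precise, rather than the first-stage extension, which is essentially a routine norm estimate. The subtlety is that $\gamma$ acts on K-theory of a Banach algebra through Lafforgue's $KK_{Ban}$-theoretic descent, which is not literally the same machine as Kasparov's descent into $KK(C_r^* (G), C_r^* (G))$; the bridge is the natural transformation $KK^G \to KK_{Ban}^G$ together with compatibility of both descents with Banach-algebra homomorphisms. I would handle this by checking that the descent functor used to define the $\gamma$-action is compatible with the morphism $\varphi$ on the level of the underlying bimodules. Because $\varphi$ is induced by the completion map on $C_c (G,B)$, the relevant Hilbert modules and operators defining the descended $\gamma$ are compatible by construction, so the diagram relating the two $\gamma$-actions commutes. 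This is exactly the content asserted, and verifying the commuting diagram of descent maps --- rather than any hard analysis --- is where the real work lies.
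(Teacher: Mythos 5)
The paper itself contains no proof of this proposition: it is imported verbatim from Lafforgue \cite[Proposition 1.6.4]{Laf}, so your attempt has to be measured against Lafforgue's argument rather than anything in the text. Your two-stage skeleton is the right one, but stage 1 does not work quite as you describe. If you try to dominate the regular representation on the Hilbert $C^*$-module $L^2(G,B)$, the estimate fails: the module norm $\norm{\int_G v(g)^* v(g)\,\textup{d}\mu(g)}_B^{1/2}$ is bounded \emph{above} by the $L^2$-norm of $g \mapsto \norm{v(g)}_B$, and the inequality you would need to close the loop goes the wrong way. The correct route is to realize $C_r^*(G,B)$ on $L^2(G)\otimes H = L^2(G,H)$ via the regular representation induced from a faithful representation of $B$ on a Hilbert space $H$; there one has the pointwise majorization $\norm{(f\cdot\xi)(g')}_H \leq (\phi_f * \psi_\xi)(g')$ with $\phi_f(g) = \norm{f(g)}_B$ and $\psi_\xi(g) = \norm{\xi(g)}_H$, and since the $L^2(G,H)$-norm of $\xi$ \emph{is} the $L^2(G)$-norm of $\psi_\xi$, this yields $\norm{f}_{C_r^*(G,B)} \leq \norm{\phi_f}_{C_r^*(G)} \leq \norm{\phi_f}_{\mc A (G)} = \norm{f}_{\mc A (G,B)}$, giving the extension. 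So your first stage is fixable, but as written it glosses over exactly the point where a naive argument breaks.

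The serious gap is in stage 2, and it is signalled by the fact that your proposal never uses the hypothesis $\norm{f}_{\mc A (G)} = \norm{f^*}_{\mc A (G)}$ anywhere. That symmetry is not decorative: $\gamma$ acts on $K_*(\mc A (G,B))$ through Lafforgue's descent in $KK_{Ban}$, whose cycles are pairs of Banach modules (``$B$-pairs'', a module and a dual module), while it acts on $K_*(C_r^*(G,B))$ through Kasparov's descent. To produce a morphism between the two descended cycles along $\varphi$ one needs the $\mc A$-norm to control \emph{both} halves of the pair, equivalently that $\varphi$ is compatible with adjoints; since $\norm{f^*(g)}_B = \Delta(g)^{-1}\norm{f(g^{-1})}_B$, the identity $\phi_{f^*} = (\phi_f)^*$ shows this is exactly what the hypothesis buys. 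Declaring the two $\gamma$-actions ``compatible by construction'' assumes precisely the content of Lafforgue's Proposition 1.6.4: the verification that the Banach descent of a Kasparov $(G,B,B)$-cycle, completed in the $\mc A$-norms, maps under $\varphi$ to its $C^*$-completion, is the actual work, and it is the step where the argument would collapse without the symmetry condition. You correctly identify where the difficulty lives, but you assert rather than prove it, so the proposal is an accurate plan rather than a proof.
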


Proposition \ref{prop:1.4} and Theorem \ref{thm:1.1} show that in $K_* (C_r^* (G,B))$
the images of $\mu^B$, of $\gamma$ and of $K_* (\mc A (G,B))$ coincide.
That leads to a criterion for BC with coefficients for $G$:

\begin{cor}\label{cor:1.5}
In the setting of Theorem \ref{thm:1.3}, suppose that $B$ is $\sigma$-unital and that 
for every class $p \in K_* (C_r^* (G,B))$ there exists an unconditional completion 
$\mc A (G)$ such that:
\begin{itemize}
\item $\norm{f}_{\mc A (G)} = \norm{f^*}_{\mc A (G)} \geq \norm{f}_{C_r^* (G)}
\qquad \forall f \in C_c (G)$,
\item $p$ lies in the image of $K_* (\mc A (G,B)) \to K_* (C_r^* (G,B))$.
\end{itemize}
Then $\mu^B : K_*^\top (G,B) \to K_* (C_r^* (G,B))$ is a bijection.
\end{cor}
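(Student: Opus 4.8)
The plan is to combine the injectivity-and-image statement of Theorem \ref{thm:1.1} with the functoriality recorded in Proposition \ref{prop:1.4}, reducing the whole corollary to a short diagram chase. First I would recall that by Theorem \ref{thm:1.1}(b) the map $\mu^B$ is already injective with image exactly $\gamma \cdot K_* (C_r^* (G,B))$; thus it only remains to prove that this image is all of $K_* (C_r^* (G,B))$. Writing $\gamma_*$ for the endomorphism of $K_* (C_r^* (G,B))$ given by the Kasparov product with $\gamma$, Theorem \ref{thm:1.1}(a) tells us that $\gamma_*$ is idempotent. Hence its image coincides with its fixed-point set, and surjectivity of $\mu^B$ is equivalent to the assertion that $\gamma_* p = p$ for every $p \in K_* (C_r^* (G,B))$.

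Next I would fix an arbitrary class $p \in K_* (C_r^* (G,B))$ and invoke the hypothesis to produce an unconditional completion $\mc A (G)$ with $\norm{f}_{\mc A (G)} = \norm{f^*}_{\mc A (G)} \geq \norm{f}_{C_r^* (G)}$ for all $f \in C_c (G)$, and with $p$ in the image of the induced K-theory map. By Proposition \ref{prop:1.4} the inclusion $C_c (G,B) \to C_r^* (G,B)$ then extends to a Banach algebra homomorphism $\mc A (G,B) \to C_r^* (G,B)$; let $\phi_* \colon K_* (\mc A (G,B)) \to K_* (C_r^* (G,B))$ denote the map it induces on K-theory. The second bullet of the hypothesis provides a class $q \in K_* (\mc A (G,B))$ with $\phi_* (q) = p$.

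The computation then runs as follows. On $K_* (\mc A (G,B))$ the image of $\gamma$ is the identity by Theorem \ref{thm:1.3}(a), so $\gamma \cdot q = q$. On the other hand, Proposition \ref{prop:1.4} asserts precisely that $\phi_*$ commutes with multiplication by $\gamma$ on the two sides. Combining these two facts yields
\[
\gamma_* p = \gamma_* \phi_* (q) = \phi_* (\gamma \cdot q) = \phi_* (q) = p .
\]
Since $p$ was arbitrary, $\gamma_* = \mr{id}$, so the image of $\mu^B$ is all of $K_* (C_r^* (G,B))$; together with the injectivity from Theorem \ref{thm:1.1}(b) this shows $\mu^B$ is a bijection.

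I do not expect a genuine analytic obstacle here: all the hard work has been pushed into the cited results, and what remains exploits the idempotence of $\gamma$ plus the compatibility of $\phi_*$ with the $\gamma$-action. The one point that requires care is that the hypothesis supplies a possibly different unconditional completion $\mc A (G)$ for each class $p$; this is harmless, because the argument verifies $\gamma_* p = p$ one class at a time, and each such verification may use its own $\mc A (G)$ independently. The only bookkeeping I would watch is to confirm that the maps appearing in the display are genuinely the ones furnished by Proposition \ref{prop:1.4} and Theorem \ref{thm:1.3}, rather than merely abstractly compatible transformations.
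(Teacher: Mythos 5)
Your proof is correct and follows essentially the same route as the paper, which deduces the corollary from the observation (stated just before it) that Theorem \ref{thm:1.1} and Proposition \ref{prop:1.4} make the image of $\mu^B$, the image of $\gamma$, and the image of $K_*(\mc A (G,B))$ coincide in $K_*(C_r^*(G,B))$. Your fixed-point formulation of the idempotent $\gamma_*$ and the class-by-class choice of $\mc A(G)$ are exactly the intended argument, merely spelled out in more detail.
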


Corollary \ref{cor:1.5} applies in particular when
\begin{equation}\label{eq:1.3}
K_* (\mc A (G,B)) \to K_* (C_r^* (G,B))
\end{equation}
can be proven to be surjective. In that case the comparison with $K_*^G (X,B)$ 
shows that \eqref{eq:1.3} is bijective. 

It is not so easy to establish directly that \eqref{eq:1.3} is surjective. When
$\mc A (G,B)$ would be closed under the holomorphic functional calculus of
$C_r^* (G,B)$, that would follow from the density theorem in K-theory 
\cite[Th\'eor\`eme A.2.1]{Bos}. Unfortunately, that seems to be rare for general $B$.

Let $\mc G$ be a reductive algebraic group over a local field $F$ and endow
$G = \mc G (F)$ with the topology coming from the metric of $F$. Lafforgue \cite{Laf} 
constructed completions of $C_c (G,B)$ with several relevant properties. Let 
$\Xi : G \to \R$ be Harish-Chandra's spherical function and let $\ell : G \to \R_{\geq 0}$ 
be a length function associated to the action of $G$ on either its symmetric space
($F$ archimedean) or its Bruhat--Tits building ($F$ non-archimedean).

For $t \in \R$ we define an unconditional norm on $C_c (G,B)$ by
\[
\norm{f}_{\mc S_t (G,B)} = \sup_{g \in G} \norm{f(g)}_B 
\Xi (g)^{-1} (1 + \ell (g))^t .
\]
Let $\mc S_t (G,B)$ be the completion of $C_c (G,B)$ with respect to the above norm
and abbreviate $\mc S_t (G) = \mc S_t (G,\C)$.

\begin{prop}\label{prop:3.5}
There exists $r_G \in \N$ such that for all $t > r_G$:
\enuma{
\item $\mc S_t (G)$ is an unconditional completion of $C_c (G)$;
\item $\mc S_t (G,B)$ is a Banach algebra;
\item $\mc S_t (G,B)$ is contained in $C_r^* (G,B)$ and the inclusion map 
is continuous.
}
\end{prop}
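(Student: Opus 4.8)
The plan is to treat the three assertions in order, taking as the definition of $r_G$ the abscissa of convergence of the integral of $\Xi^2$ against inverse powers of $(1+\ell)$; that is,
\[
r_G = \inf\Big\{ s \in \N : \int_G \Xi(g)^2 (1+\ell(g))^{-s}\,dg < \infty \Big\} ,
\]
which is finite by Harish-Chandra's estimates for $\Xi$ (the bound $\Xi(a) \asymp \delta(a)^{-1/2}(1+\ell(a))^d$ on a positive chamber, together with the Weyl--Cartan integration formula; the non-archimedean analogue is due to Macdonald and Harish-Chandra). For (a) there is essentially nothing to prove: the quantity $\norm{f}_{\mc S_t(G)}$ depends on $f$ only through $g \mapsto |f(g)|$, so it is unconditional by construction, and it is finite on $C_c(G)$ because $\Xi$ is continuous and strictly positive, hence bounded away from $0$ and $\infty$ on any compact set. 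Completeness and the density of $C_c(G)$ are automatic from the definition of $\mc S_t(G)$ as a completion.

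For (b) I would reduce the submultiplicativity of convolution to a single estimate on $\Xi$. Writing out
\[
\norm{(f_1 * f_2)(g)}_B \leq \int_G \norm{f_1(h)}_B \, \norm{f_2(h^{-1}g)}_B \, dh
\]
and bounding each factor by its $\mc S_t$-norm, one is left with controlling
\[
\Xi(g)^{-1}(1+\ell(g))^t \int_G \Xi(h)\,\Xi(h^{-1}g)\,(1+\ell(h))^{-t}(1+\ell(h^{-1}g))^{-t}\,dh
\]
uniformly in $g$. The point is the refined convolution estimate
\[
\int_G \Xi(h)\,\Xi(h^{-1}g)\,(1+\ell(h))^{-t}(1+\ell(h^{-1}g))^{-t}\,dh \leq C\,\Xi(g)\,(1+\ell(g))^{-t} ,
\]
which makes the displayed supremum bounded. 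To prove it I would use the subadditivity $\ell(g) \leq \ell(h) + \ell(h^{-1}g)$ of the length function (valid because $X$ is $\mathrm{CAT}(0)$): on the region where $\ell(h) \geq \ell(g)/2$ one has $(1+\ell(h))^{-t} \leq 2^t (1+\ell(g))^{-t}$, and symmetrically on the complementary region, so that everything reduces to the one-sided estimate
\[
\int_G \Xi(h)\,\Xi(h^{-1}g)\,(1+\ell(h))^{-t}\,dh \leq C\,\Xi(g) \qquad (t > r_G) .
\]
This last inequality is the genuine analytic input; it follows from Harish-Chandra's bound $(\Xi * \Xi)(g) \leq C\,\Xi(g)(1+\ell(g))^{D}$ once the inserted factor $(1+\ell(h))^{-t}$ is used to absorb the polynomial and to secure convergence. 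After rescaling the norm by the resulting constant one obtains a genuine Banach algebra.

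For (c) I would estimate the reduced norm directly on the Hilbert space of the regular representation. For $f \in C_c(G,B)$ and $\xi,\eta \in C_c(G)\otimes H_B$, the majorization $\norm{f(g)}_B \leq \norm{f}_{\mc S_t(G,B)}\,\Xi(g)(1+\ell(g))^{-t}$ together with the fact that the $G$-action on $B$ is by norm-preserving automorphisms gives
\[
|\langle f\cdot\xi, \eta\rangle| \leq \norm{f}_{\mc S_t(G,B)} \; \big\langle \lambda(\Phi)\,|\xi|, |\eta| \big\rangle, \qquad \Phi(g) = \Xi(g)(1+\ell(g))^{-t},
\]
where $|\xi|,|\eta|$ denote the pointwise $H_B$-norms and $\lambda(\Phi)$ is left convolution by the nonnegative, bi-$K$-invariant function $\Phi$. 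Hence $\norm{f}_{C_r^*(G,B)} \leq \norm{f}_{\mc S_t(G,B)}\,\norm{\lambda(\Phi)}$. Since $\Phi \geq 0$ is bi-$K$-invariant, its reduced norm is computed by the spherical Plancherel theorem, using that every tempered spherical function is bounded in absolute value by $\Xi$, as
\[
\norm{\lambda(\Phi)} = \int_G \Phi(g)\,\Xi(g)\,dg = \int_G \Xi(g)^2 (1+\ell(g))^{-t}\,dg ,
\]
which is finite precisely for $t > r_G$. This yields the continuous inclusion $\mc S_t(G,B) \hookrightarrow C_r^*(G,B)$.

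I expect the main obstacle to be the one-sided convolution estimate feeding part (b): everything else is either formal (part (a) and the reduction in (b)) or a clean application of the spherical Plancherel formula together with the integrability of $\Xi^2$ (part (c)). In particular, the uniform control of $\int_G \Xi(h)\Xi(h^{-1}g)(1+\ell(h))^{-t}\,dh$ by $\Xi(g)$ is where Harish-Chandra's fine asymptotics of $\Xi$ on the chambers, and their non-archimedean counterparts, are really needed; carrying these through uniformly in $g$ --- especially in the intermediate range where neither $\ell(h)$ nor $\ell(h^{-1}g)$ dominates --- is the delicate step, and is presumably the place where the generalization of Vign\'eras's results enters.
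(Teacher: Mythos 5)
Your proposal is correct and is, in substance, the same proof as the paper's: the paper takes the identical threshold $r_G$ (finiteness of $\int_G \Xi(g)^2(1+\ell(g))^{-t}\,\textup{d}\mu(g)$ for $t>r_G$, quoted from Waldspurger, Harish--Chandra and Vign\'eras) and then simply cites Lafforgue for all three parts --- Proposition 4.4.4 for the Banach algebra property, Proposition 1.5.1 for the coefficient version $\mc S_t(G,B)$, and Propositions 4.5.2/4.8.2 for the continuous inclusion into $C_r^*(G,B)$ --- whereas you reconstitute the proofs of those propositions. Your reductions (splitting the convolution integral according to whether $\ell(h)\geq\ell(g)/2$, which needs only subadditivity of $\ell$, not CAT(0); majorizing the regular representation on $L^2(G,H_B)$ for a faithful representation of $B$ on $H_B$ by scalar convolution with $\Phi=\Xi\sigma^{-t}$) are exactly the right ones, and self-containedness is what your version buys over the paper's citations. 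Three corrections, the first of which matters. (i) The ``Harish--Chandra bound $(\Xi*\Xi)(g)\leq C\,\Xi(g)(1+\ell(g))^{D}$'' from which you propose to deduce the one-sided estimate is vacuous: $\Xi*\Xi$ diverges pointwise for noncompact $G$ (already $\int_G \Xi^2\,\textup{d}\mu = \infty$ for $\SL_2(F)$), so that deduction fails as written. Fortunately the damped estimate you actually need is an \emph{identity}, obtained in one line from Harish--Chandra's functional equation $\int_K \Xi(xky)\,\textup{d}\mu(k)=\Xi(x)\Xi(y)$ together with the bi-$K$-invariance of $\Xi\sigma^{-t}$ and $\Xi(h)=\Xi(h^{-1})$:
\[
\int_G \Xi(h)\,\sigma(h)^{-t}\,\Xi(h^{-1}g)\,\textup{d}\mu(h)
\;=\;\Xi(g)\int_G \Xi(h)^2 \sigma(h)^{-t}\,\textup{d}\mu(h) ,
\]
which is finite precisely for $t>r_G$; in particular the ``delicate intermediate range'' you worry about at the end does not arise. (ii) In part (c) you only need, and your argument only justifies, the inequality $\norm{\lambda(\Phi)}\leq \int_G \Phi\,\Xi\,\textup{d}\mu$ (Herz's majorization principle, as in Cowling--Haagerup--Howe); the asserted equality via the spherical Plancherel theorem is superfluous. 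Note also that (a) is not ``nothing to prove'': by Definition 1.2 an unconditional completion is in particular a Banach algebra, so (a) depends on your (b) with $B=\C$, after the rescaling of the norm (which preserves unconditionality). (iii) Your closing guess that this is ``where the generalization of Vign\'eras's results enters'' is off: Vign\'eras is cited in this proposition only for the archimedean integrability estimate, and her results on holomorphically closed subalgebras enter elsewhere, in Section 2 of the paper.
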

\begin{proof}
(a) According to \cite[Lemme II.1.5]{Wal} (for $F$ non-archimedean), \cite[p. 279]{HC}
(for $F$ archimedean, $\mc G$ semisimple) and \cite[Lemma 27]{Vig} ($F$ archimedean)
there exists $r_G \in \N$ such that 
\[
\int_G \Xi (g)^2 (1 + \ell (g))^{-t} \textup{d}\mu (g) < \infty \quad \text{for all } t > r_G .
\]
Now \cite[Proposition 4.4.4]{Laf} says that $\mc S_t (G)$ is a Banach algebra for $t > r_G$.\\
(b) This follows from part (a) and \cite[Proposition 1.5.1]{Laf}.\\
(c) See \cite[Propositions 4.5.2 and 4.8.2]{Laf}.
\end{proof}

For large $t$, Corollary \ref{cor:1.5} applies to the algebra $\mc S_t (G)$.

\begin{thm}\label{thm:1.6} \textup{\cite{Laf}} \\
Let $\mc G$ be a reductive algebraic group defined over a local field $F$ and write 
$G = \mc G (F)$. 

For $t > r_G$, $\mc S_t (G)$ is an unconditional completion of $C_c (G)$ which is 
holomorphically closed in $C_r^* (G)$. As a consequence, the Baum--Connes conjecture 
(with trivial coefficients) holds for $G$.
\end{thm}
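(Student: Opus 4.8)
Statement (a) is already contained in Proposition~\ref{prop:3.5}(a), so the plan concerns only the holomorphic closure of $\mc S_t (G)$ in $C_r^* (G)$; granting that, the Baum--Connes assertion will drop out formally from the results of Section~\ref{sec:1}. Fix $t > r_G$. By Proposition~\ref{prop:3.5} the space $\mc S_t (G)$ is a Banach algebra sitting continuously and densely in $C_r^* (G)$, and since $\Xi (g^{-1}) = \Xi (g)$ and $\ell (g^{-1}) = \ell (g)$ we have $\norm{f^*}_{\mc S_t (G)} = \norm{f}_{\mc S_t (G)}$, so it is in fact a Banach $*$-algebra.

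To prove holomorphic closure I would establish the equivalent property of spectral invariance: if $a \in \mc S_t (G)$ and $1 + a$ is invertible in the unitisation $\widetilde{C_r^* (G)}$, then $(1+a)^{-1} \in \widetilde{\mc S_t (G)}$. For a dense Banach $*$-subalgebra of a $C^*$-algebra this is equivalent to stability under the holomorphic functional calculus \cite{Bos}. Writing $b = 1+a$ and using $b^{-1} = (b^* b)^{-1} b^*$, it suffices to treat the self-adjoint element $b^* b$, so the whole question reduces, by a Hulanicki-type criterion, to showing that for self-adjoint $a \in \mc S_t (G)$ the spectral radius computed in $\mc S_t (G)$ equals the one computed in $C_r^* (G)$, i.e.\ that the convolution powers satisfy $\limsup_n \norm{a^{*n}}_{\mc S_t (G)}^{1/n} = \norm{a}_{C_r^* (G)}$. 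The inequality $\geq$ is immediate from Proposition~\ref{prop:3.5}(c).

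The reverse inequality is the analytic core, and I expect it to be the main obstacle. The structural input I would exploit is the subadditivity $\ell (gh) \leq \ell (g) + \ell (h)$ of the length function: on positive kernels it turns pointwise multiplication by $1 + \ell$ into a quasi-derivation for convolution, so that the family $\{ \mc S_s (G) \}_s$ behaves like a Sobolev tower attached to this quasi-derivation and the $\mc S_t$-norm of $a^{*n}$ (equivalently, of the resolvent $(z-a)^{-1}$ for $z$ outside $[-\norm{a}_{C_r^* (G)}, \norm{a}_{C_r^* (G)}]$) can be bootstrapped from a base convolution estimate. That base estimate is in turn controlled by the integrability $\int_G \Xi (g)^2 (1 + \ell (g))^{-t}\, \textup{d}\mu (g) < \infty$ underlying Proposition~\ref{prop:3.5}(a). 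Making these bounds uniform enough that the resolvent series converges in $\mc S_t (G)$-norm is exactly the place where a generalisation of the Schwartz-space estimates of Vign\'eras \cite{Vig} is required, and where the delicate work will sit.

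Once holomorphic closure is known, the conclusion is formal. By the density theorem in K-theory \cite[Th\'eor\`eme A.2.1]{Bos} the inclusion induces an isomorphism $K_* (\mc S_t (G)) \isom K_* (C_r^* (G))$; in particular the comparison map is surjective. Replacing $\norm{\cdot}_{\mc S_t (G)}$ by its product with the (finite) constant of Proposition~\ref{prop:3.5}(c) yields an equivalent, still $*$-invariant and submultiplicative, norm satisfying $\norm{f}_{\mc S_t (G)} \geq \norm{f}_{C_r^* (G)}$, so the hypotheses of Corollary~\ref{cor:1.5} hold for every class $p \in K_* (C_r^* (G))$ (using Theorem~\ref{thm:1.3}(b) and the $\gamma$-compatibility of Proposition~\ref{prop:1.4}). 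Corollary~\ref{cor:1.5} then gives that $\mu^{\C} : K_*^\top (G) \to K_* (C_r^* (G))$ is bijective, which is the Baum--Connes conjecture with trivial coefficients for $G$.
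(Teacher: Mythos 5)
Your closing paragraph (density theorem, rescaling the norm, Proposition~\ref{prop:1.4}, Corollary~\ref{cor:1.5}) is sound and is exactly how such a theorem yields Baum--Connes; likewise your reduction of holomorphic closedness to spectral invariance, and then via a Hulanicki-type lemma to the spectral-radius identity $\lim_n \norm{a^n}_{\mc S_t (G)}^{1/n} = \norm{a}_{C_r^* (G)}$ for self-adjoint $a$, is a legitimate standard reduction. The genuine gap is that this identity \emph{is} the theorem, and you explicitly defer it (``the main obstacle'', ``where the delicate work will sit'') while earmarking inputs that cannot carry it. Subadditivity of $\ell$ and the integrability $\int_G \Xi (g)^2 (1+\ell (g))^{-t}\,\textup{d}\mu (g) < \infty$ are precisely what already went into Proposition~\ref{prop:3.5}: they give submultiplicativity and the continuous inclusion into $C_r^* (G)$, hence only $\norm{a^n}_{\mc S_t (G)} \leq \norm{a}_{\mc S_t (G)}^n$, so the resolvent series still converges only for $|z| > \norm{a}_{\mc S_t (G)}$. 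Beating the Banach spectral radius down to the $C^*$-norm requires sharp convolution estimates against the weight $\Xi^{-1} (1+\ell)^t$, i.e.\ the harmonic analysis of reductive groups developed in \cite[\S 4]{Laf}; the paper warns immediately after the theorem that these properties ``are quite specific, they are not available for most other groups''. A soft quasi-derivation/Sobolev-tower bootstrap of the kind you sketch uses only the length function and would, if valid, prove spectral invariance of such weighted algebras for essentially arbitrary groups acting on CAT(0)-spaces, far beyond where it can hold --- a sign the scheme cannot close without the reductive-specific input.

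For the comparison you were asked about: the paper contains no proof of Theorem~\ref{thm:1.6} at all --- it is imported wholesale from \cite{Laf}, the only in-paper ingredient being Proposition~\ref{prop:3.5}, which you correctly reuse. Moreover, the closest the paper comes to your missing step runs along genuinely different lines. Instead of proving a weighted-sup-norm algebra holomorphically closed (which the paper remarks ``seems to be rare for general $B$''), Sections~\ref{sec:2}--\ref{sec:3} construct the $L^2$-type Fr\'echet algebra $V_\ell^\infty (G,B)$, prove \emph{its} spectral invariance by Vign\'eras' derivation method --- invertibility is transferred directly from $C_r^* (G,B)$ using boundedness of all iterated commutators $D^n (\lambda (a))$ with $\sigma = 1 + \ell$ and a Neumann-series argument (Theorem~\ref{thm:2.4}, Lemma~\ref{lem:2.6}.b, \cite[Lemmas 15--17]{Vig}), with no spectral-radius computation --- and only afterwards push K-theory classes into $\mc S_t (G,B)$ via Lemma~\ref{lem:2.1} and the Cartan decomposition. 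That detour exists precisely because the step at which your proposal stops is the hard one.
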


The properties of reductive groups which Lafforgue uses \cite[\S 4.1]{Laf} are quite
specific, they are not available for most other groups. Furthermore Theorem \ref{thm:1.6} 
is not known with nontrivial coefficient algebras. In fact, in \cite[p. 93]{Laf} 
some obstructions are mentioned.

\section{Spaces of rapidly decreasing functions}
\label{sec:2}

The goal of this paragraph is to make full use of results of Vign\'eras, which produce
holomorphically closed subalgebras of $C^*$-algebras.

Let $G$ be locally compact Hausdorff group with a Haar measure $\mu$. Let $B$ be any 
$G$-$C^*$-algebra. Recall that $C_c (G,B)$ acts on the Hilbert $C^*$-module $L^2 (G,B)$, 
by a combination of the convolution product of $G$ and the product of $B$.
The reduced crossed product $C_r^* (G,B)$ is the closure of $C_c (G,B)$ with respect
to the operator norm from $\mc B (L^2 (G,B))$. For $a \in C_r^* (G,B)$, we denote
the corresponding bounded operator on $L^2 (G,B)$ by $\lambda (a)$.

Let $\ell : G \to \R_{\geq 0}$ be a Borel-measurable length function with 
\begin{equation}\label{eq:2.1}
\ell (g) = \ell (g^{-1}) \quad \text{for all } g \in G . 
\end{equation}
We note that pointwise multiplication by $\sigma := 1 + \ell$ is an unbounded operator 
on $L^2 (G,B)$. For $A \in \mc B (L^2 (G,B))$ we define a (possibly unbounded) operator
\[
D(A) : v \mapsto \sigma A(v) - A (\sigma v) . 
\]
Notice that $D(A) = [\sigma,A]$, so that $D$ is a derivation.
Consider the vector space 
\[
V_\ell^\infty (G,B) = \{ a \in C_r^* (G,B) : D^n (\lambda (a)) \in
\mc B (L^2 (G,B)) \; \forall n \in \Z_{\geq 0} \} ,
\]
with the topology given by the seminorms 
\[                                     
a \mapsto \norm{D^n (\lambda (a))}_{\mc B (L^2 (G,B))} \quad n \in \Z_{\geq 0} .
\]
We will now formulate a version of the results of \cite[\S 7]{Vig} for $V_\ell^\infty (G,B)$.
We note that, although Vign\'eras works exclusively with coefficients $\C$, her arguments
are equally valid with other coefficient $G$-$C^*$-algebras.

\begin{thm}\label{thm:2.4}
Let $G$ be a locally compact group and let $B$ be a $G$-$C^*$-algebra.
\enuma{
\item $V_\ell^\infty (G,B)$ is a Fr\'echet algebra containing $C_c (G,B)$.
\item The inclusion $V_\ell^\infty (G,B) \to C_r^* (G,B)$ is continuous, with dense image.
\item The set of invertible elements in the unitization $V_\ell^\infty (G,B)^+$ is open,
and inversion is a continuous map from this set to itself.
\item An element of $V_\ell^\infty (G,B)^+$ is invertible if and only if its image in
$C_r^* (G,B)^+$ is invertible.
}
\end{thm}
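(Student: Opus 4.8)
The plan is to recognize $V_\ell^\infty(G,B)$ as the algebra of smooth vectors for the closed $*$-derivation $D = [\sigma,\cdot]$ on $\mc B(L^2(G,B))$, and to deduce all four assertions from the standard machinery for such derivations, the two nontrivial inputs being the boundedness of the iterated commutators on $C_c(G,B)$ (for (a)) and the spectral invariance of the smooth domain (for (c), (d)). Throughout I identify $a \in C_r^*(G,B)$ with $\lambda(a)$; note that $p_0(a) = \norm{\lambda(a)} = \norm{a}_{C_r^*(G,B)}$, so the $C^*$-norm is one of the defining seminorms. The first thing I would record is that $D$ is a closed derivation: since $\sigma = 1+\ell$ is a nonnegative self-adjoint multiplication operator, the usual argument (test against $v,w \in \mr{dom}(\sigma)$ and use that operator-norm limits converge on fixed vectors) shows that if $A_j \to A$ and $D(A_j) \to C$ then $A \in \mr{dom}(D)$ and $D(A) = C$; the same holds for each $D^n$ by induction.

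For parts (a) and (b), the Leibniz rule $D^n(AB) = \sum_{k=0}^n \binom nk D^k(A) D^{n-k}(B)$ shows that $V_\ell^\infty(G,B)$ is a subalgebra with $p_n(ab) \le \sum_{k=0}^n \binom nk p_k(a) p_{n-k}(b)$, so multiplication is jointly continuous, while $D(A^*) = -D(A)^*$ gives stability under $*$ and $p_n(a^*)=p_n(a)$. Completeness follows from closedness of the $D^n$: a Cauchy sequence converges in each seminorm and the limits assemble into an element of $\bigcap_n \mr{dom}(D^n)$. For $C_c(G,B) \subseteq V_\ell^\infty(G,B)$ I would compute the kernel of $D^n(\lambda(f))$ explicitly, namely $f(h)\,\bigl(\ell(g) - \ell(h^{-1}g)\bigr)^n$; using that $\ell$ comes from the isometric action on $X$, so that $|\ell(g)-\ell(h^{-1}g)| \le \ell(h)$, this kernel is dominated by $|f(h)|\,\ell(h)^n$, which is again compactly supported and hence defines a bounded operator by the $L^1$--$L^2$ (Young) estimate for the crossed product. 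Thus every $p_n(f)$ is finite. For (b), the inclusion is continuous because $\norm{\cdot}_{C_r^*(G,B)} = p_0$, and it has dense image because $C_c(G,B)$ is already dense in $C_r^*(G,B)$.

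Part (d) is the heart of the matter, and the step I expect to be the main obstacle. I would phrase it as spectral invariance of the smooth domain $\mc B^\infty := \bigcap_n \mr{dom}(D^n)$ inside $\mc B(L^2(G,B))$, applied via $\lambda^+$ to the unitizations (the identity operator lies in $\mc B^\infty$ since $D(I)=0$). Only the implication "invertible in $C_r^*(G,B)^+$ $\Rightarrow$ invertible in $V_\ell^\infty(G,B)^+$" requires work, and it reduces to: if $T \in \mc B^\infty$ is invertible in $\mc B(L^2(G,B))$ then $T^{-1} \in \mc B^\infty$. The delicate point is the domain bookkeeping for the \emph{unbounded} $D$: the formal identity $D(T^{-1}) = -T^{-1}D(T)T^{-1}$ must be upgraded to the genuine statement $T^{-1}\in\mr{dom}(D)$. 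I would do this by regularizing with $\sigma_\epsilon := \sigma(1+\epsilon\sigma)^{-1}$, so that $D_\epsilon := [\sigma_\epsilon,\cdot]$ is a bounded inner derivation; a short commutator identity gives $\norm{D_\epsilon(A)} \le 2\norm{D(A)}$ uniformly and $D_\epsilon(A) \to D(A)$ strongly for $A \in \mr{dom}(D)$. Writing $D_\epsilon(T^{-1}) = -T^{-1}D_\epsilon(T)T^{-1}$, passing to the limit by testing against $v,w \in \mr{dom}(\sigma)$, and invoking closedness yields $T^{-1} \in \mr{dom}(D)$ with $D(T^{-1}) = -T^{-1}D(T)T^{-1}$; an induction with the Leibniz rule then puts $T^{-1}$ in every $\mr{dom}(D^n)$.

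Finally, part (c) follows quickly once (d) is in hand. Openness is immediate: by (d) the invertibles in $V_\ell^\infty(G,B)^+$ are exactly the preimage, under the continuous inclusion into $C_r^*(G,B)^+$, of the open set of invertibles of the $C^*$-algebra. For continuity of inversion I would iterate $D(A^{-1}) = -A^{-1}D(A)A^{-1}$ to get that $D^n(A^{-1})$ is a finite sum, with coefficients depending only on $n$, of products $\pm A^{-1}D^{k_1}(A)A^{-1}\cdots A^{-1}D^{k_m}(A)A^{-1}$ with $\sum_i k_i = n$; taking norms gives $p_n(a^{-1}) \le P_n\bigl(\norm{a^{-1}}_{C_r^*(G,B)};\, p_1(a),\dots,p_n(a)\bigr)$ for a universal polynomial $P_n$. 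Hence if $a_j \to a$ in $V_\ell^\infty(G,B)^+$ the seminorms $p_n(a_j^{-1})$ stay bounded (the $p_k(a_j)$ converge and $\norm{a_j^{-1}}_{C_r^*}$ converges by Banach-algebra continuity of inversion), and feeding this together with $p_k(a-a_j) \to 0$ into the Leibniz expansion of $p_n\bigl(a_j^{-1}(a-a_j)a^{-1}\bigr)$ gives $p_n(a_j^{-1}-a^{-1}) \to 0$. Besides the domain argument in (d), the one other step I would watch carefully is the containment in (a): it genuinely uses the Lipschitz bound $|\ell(g)-\ell(h^{-1}g)|\le\ell(h)$ coming from the metric origin of $\ell$, which can fail for a merely symmetric Borel length function.
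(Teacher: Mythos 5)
Your proposal is correct in substance, and for parts (c) and (d) it takes a genuinely different route from the paper. For (a) and (b) you do essentially what the paper does: the same kernel computation showing that $D^n (\lambda (f))$ has integral kernel $f(h)\bigl(\ell (g) - \ell (h^{-1}g)\bigr)^n$ (the paper cites \cite[Theorem 1.2.11]{Sch} for the boundedness of $\ell$ on compacta, which you should also invoke explicitly, since a finite Borel function need not be bounded on a compact set without the subadditivity of $\ell$), and completeness via closedness of the operators $D^n$, which the paper outsources to \cite[Lemma 15]{Vig}. For invertibility the paper argues in the opposite order: it first proves (c) directly by a Neumann-series argument (\cite[Lemma 16]{Vig}: if $\norm{a}_{C_r^* (G,B)} < 1$ then $1-a$ is invertible in $V_\ell^\infty (G,B)^+$, with continuity of $a \mapsto (1-a)^{-1}$ near $0$ from the same seminorm estimates), and then deduces (d) from (a)--(c) together with \cite[Lemma 17]{Vig}, which in effect reduces general invertibility, via the $*$-structure $D(A^*) = -D(A)^*$ and positivity, to the norm-less-than-one case. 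You instead prove (d) first, as spectral invariance of the smooth domain of the closed derivation $D$, using the regularization $\sigma_\epsilon = \sigma (1+\epsilon \sigma)^{-1}$; note that $D_\epsilon (A) = (1+\epsilon\sigma)^{-1} D(A) (1+\epsilon\sigma)^{-1}$, which gives $\norm{D_\epsilon (A)} \leq \norm{D(A)}$, slightly better than your factor $2$. You then get (c) from (d) plus Leibniz-type polynomial estimates on $p_n (a^{-1})$. This is the classical closed-derivation (Bratteli--Robinson style) argument: it is self-contained where the paper leans on Vign\'eras's lemmas, and your quantitative bound on $p_n (a^{-1})$ makes the continuity of inversion more explicit than the paper's brief appeal to ``the same calculation''. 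The domain bookkeeping you describe (testing against $\mathrm{dom}(\sigma)$, using that $\sigma$ is a multiplication operator on the Hilbert module $L^2 (G,B)$ so that convergence of $\sigma_\epsilon w$ forces $w \in \mathrm{dom}(\sigma)$) does go through.

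One remark in your final sentence is wrong, and it matters for whether your proof of (a) covers the theorem as stated: the bound $|\ell (g) - \ell (h^{-1}g)| \leq \ell (h)$ does \emph{not} require $\ell$ to come from a metric. It follows from subadditivity ($\ell (xy) \leq \ell (x) + \ell (y)$, which is part of the definition of a length function) together with the standing symmetry hypothesis \eqref{eq:2.1}, exactly as in the paper's own computation; it cannot fail for a symmetric Borel length function. This is fortunate for you, because Theorem \ref{thm:2.4} is stated for an arbitrary locally compact group $G$ with such an $\ell$ --- no space $X$ and no isometric action are present in Section \ref{sec:2} --- so if the Lipschitz bound genuinely needed a metric origin, your proof of the containment $C_c (G,B) \subset V_\ell^\infty (G,B)$ would have a gap in the stated generality. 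As it stands it does not; simply replace the appeal to the isometric action by the subadditivity of $\ell$.
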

\begin{proof}
(a) By \cite[Lemma 15]{Vig} $V_\ell^\infty (G,B)$ is a Fr\'echet space. Since $D$ is 
a derivation, $V_\ell^\infty (G,B)$ is closed under the multiplication of $C_r^* (G,B)$
and multiplication is jointly continuous for the topology of $V_\ell^\infty (G,B)$.

Suppose that $a \in C_c (G,B)$. For $v \in L^2 (G,B)$ and $g' \in G$ we compute
\begin{align}
\nonumber (D (\lambda (a)) v)(g') & = \sigma (g') \int_G a(g) g (v (g^{-1} g')) 
\textup{d}\mu (g) - \int_G a(g) g (v (g^{-1} g')) \sigma (g^{-1}g') \textup{d}\mu (g) \\
\label{eq:2.2} & = \int_G (\sigma (g') - 
\sigma (g^{-1} g')) a(g) g (v (g^{-1} g')) \textup{d}\mu (g) 
\end{align}
As $\ell$ is a length function and by \eqref{eq:2.1}:
\[
|\sigma (g') - \sigma (g^{-1} g')| = | \ell (g') - \ell (g^{-1} g')| \leq
\ell (g^{-1}) = \ell (g)
\]
By \cite[Theorem 1.2.11]{Sch} $\ell$ is bounded on the support of $a$, say by $C_a$. 
Then \eqref{eq:2.2} entails 
\[
\norm{D(\lambda (a))(v)}_{L^2 (G,B)} \leq C_a \norm{a * v}_{L^2 (G,B)} \leq
C_a \norm{\lambda (a)}_{\mc B (L^2 (G,B))} \norm{v}_{L^2 (G,B)} .
\]
With induction we see that $D^n (\lambda (a)) \in \mc B (L^2 (G,B))$ for all 
$n \in \Z_{\geq 0}$, so $C_c (G,B)$ is contained in $V_\ell^\infty (G,B)$. \\
(b) As $\norm{D^0 (\lambda (a))}_{\mc B (L^2 (G,B))} = \norm{a}_{C_r^* (G,B)}$, the inclusion
is continuous. Its image contains $C_c (G,B)$, so is dense in $C_r^* (G,B)$.\\
(c) Any invertible element of $V_\ell^\infty (G,B)^+$ is of the form 
$z + a$ with $z \in \C^\times$ and $a \in V_\ell^\infty (G,B)$. As multiplication by
$z^{-1} \in \C^\times$ is certainly continuous, it suffices to consider the subset
$1 + V_\ell^\infty (G,B)$ of $V_\ell^\infty (G,B)^+$. 

For $a \in V_\ell^\infty (G,B)$ with $\norm{a}_{C_r^* (G,B)} < 1$, \cite[Lemma 16]{Vig}
shows that
\begin{equation}\label{eq:2.4}
1 - a \text{ is invertible in } V_\ell^\infty (G,B)^+ .
\end{equation}
(See \cite[Theorem 5.12]{SolThesis} for an analogous argument in a different context.)
The same calculation entails that $a \mapsto (1 - a)^{-1}$ is continuous around 0 in 
$V_\ell^\infty (G,B)$.\\
(d) This follows from parts (a),(b),(c), \eqref{eq:2.4} and \cite[Lemma 17]{Vig}.
\end{proof}

Suppose that $K \subset G$ is a compact open subgroup such that $\ell$ is $K$-biinvariant. 
Let $e_K \in L^2 (G)$ be $\mu (K)^{-1}$ times the indicator function of $K$. This
is a projection in $C_r^* (G)$ and in the multiplier algebra of $C_r^* (G,B)$.
Right multiplication by $e_K$ just means averaging a measurable function $f : G \to B$ 
over $K$, making it right-$K$-invariant. Left multiplication of $f$ by $e_K$ 
can described explicitly as
\begin{equation}\label{eq:2.12}
(e_K * f)(g) = \mu (K)^{-1} \int_K k (f(k^{-1}g)) \textup{d}\mu (k) .
\end{equation}
If $f = e_K * f$, then we say that $f$ is twisted left-$K$-invariant. Equivalently,
$f(kg) = k (f(g))$ for all $g \in G, k \in K$.

For $r \in \R$ we define a norm on $C_c (G,B)$ by
\[
\nu_r (f) = \Big( \int_G \norm{f(g)}_B^2 \sigma (g)^{2r} \textup{d}\mu (g) \Big)^{1/2} .
\]
\texttt{Note: $\nu_0 (f)$ is not the norm of $f$ in the Hilbert $C^*$-module $L^2 (G,B)$,\\
that would be
\[
\norm{f}_{L^2 (G,B)} = \norm{  \int_G f(g) f(g)^* \textup{d} \mu (g) }_B^{1/2} 
\]
}
Let $S_\ell^\infty (G,B)$ be the completion of $C_c (G,B)$ with respect to the family
of norms $\nu_r \; (r \in \Z)$. Following \cite{Vig}, we call it the space of 
$\ell$-rapidly decreasing functions in $L^2 (G,B)$. See \cite{Sch} for many similar 
dense Fr\'echet subspaces of $L^2 (G,B)$.

Let $e_K S_\ell^\infty (G, B) e_K$ be the subspace of $S_\ell^\infty (G,B)$
consisting of right-$K$-invariant, twisted left $K$-invariant maps. Equivalently, 
$e_K S_\ell^\infty (G, B) e_K$ is the closure of\\ 
$e_K C_c (G, B) e_K$ with respect to the norms $\nu_r \; (r \in \Z)$.
Write $e_K C_r^* (G, B) e_K$ and $e_K V_\ell^\infty (G, B) e_K$ for the subalgebras of
right-$K$-invariant, twisted left-$K$-invariants element in, respectively, 
$C_r^* (G,B)$ and $V_\ell^\infty (G,B)$. Notice that $e_K$ is the identity element of
the multiplier algebra of $e_K C_r^* (G, B) e_K$.

\begin{lem}\label{lem:2.6}
\enuma{
\item $e_K V_\ell^\infty (G, B) e_K \subset e_K S_\ell^\infty (G, B) e_K$.

\texttt{Probably not true!}
\item $e_K V_\ell^\infty (G, B) e_K$ is closed under the holomorphic functional calculus of\\ 
$e_K C_r^* (G, B) e_K$.
} 
\end{lem}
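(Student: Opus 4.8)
The two parts are logically independent, so I would dispose of (b) first, since it is the robust statement: it is really just a corner version of the spectral invariance already packaged in Theorem \ref{thm:2.4}. The plan is to note first that $e_K$ is a \emph{smooth} idempotent. Because $\ell$ is $K$-biinvariant it is constant on $K$, say with value $\ell(e)=c_K-1$, so the factor $\sigma(g')-\sigma(g^{-1}g')$ in \eqref{eq:2.2} vanishes on the support of $e_K$; hence $D(\lambda(e_K))=0$ and $e_K\in V_\ell^\infty(G,B)$. Working in the unitization $V_\ell^\infty(G,B)^+$, the Peirce decomposition with respect to the idempotent $e_K$ gives, for $x=e_Kxe_K$, that $x$ is invertible in the corner $e_K V_\ell^\infty(G,B)e_K$ (with unit $e_K$) if and only if $x+(1-e_K)$ is invertible in $V_\ell^\infty(G,B)^+$; the identical statement holds with $C_r^*(G,B)$ in place of $V_\ell^\infty(G,B)$. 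Chaining these two equivalences through Theorem \ref{thm:2.4}(d) shows that $x$ is invertible in the corner of $V_\ell^\infty(G,B)$ exactly when it is invertible in the corner of $C_r^*(G,B)$. Combined with the continuity of inversion from Theorem \ref{thm:2.4}(c), the standard resolvent argument then finishes (b): for $f$ holomorphic near the spectrum of $x$ the resolvent $z\mapsto(z-x)^{-1}$ takes values in $e_K V_\ell^\infty(G,B)e_K$ and varies continuously, so the contour integral defining $f(x)$ converges in that corner.

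For (a) the natural plan is to reconstruct $a$ from the operator $\lambda(a)$ by exploiting twisted $K$-invariance. If $a\in e_K V_\ell^\infty(G,B)e_K$ is right-$K$-invariant then $a=\lambda(a)e_K$ (for $B$ unital directly, and via an approximate unit in general). Since right multiplication by $\sigma$ commutes with $D$, one has the finite Leibniz expansion
\[
\sigma^r\lambda(a)=\sum_{k=0}^{r}\binom{r}{k}\,D^k(\lambda(a))\,\sigma^{r-k},\qquad r\in\Z_{\geq 0},
\]
and applying it to $e_K$, using that $\sigma^{r-k}e_K=c_K^{\,r-k}e_K$, yields
\[
\sigma^r a=\sum_{k=0}^{r}\binom{r}{k}c_K^{\,r-k}\,D^k(\lambda(a))\,e_K,
\qquad
\norm{\sigma^r a}\leq\sum_{k=0}^{r}\binom{r}{k}c_K^{\,r-k}\,\norm{D^k(\lambda(a))}\,\norm{e_K}<\infty .
\]
This looks like exactly the estimate one wants, and for scalar coefficients it \emph{does} prove (a).

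The hard part — and the place where I expect the argument to break — is that this estimate controls the wrong norm as soon as $B\neq\C$. What the left-hand side measures is the Hilbert $C^*$-module norm $\norm{\sigma^r a}_{L^2(G,B)}=\norm{\int_G\sigma(g)^{2r}a(g)a(g)^*\,\textup{d}\mu(g)}_B^{1/2}$, whereas membership of $S_\ell^\infty(G,B)$ requires finiteness of $\nu_r(a)=\big(\int_G\norm{a(g)}_B^2\sigma(g)^{2r}\,\textup{d}\mu(g)\big)^{1/2}$. As the Note following the definition of $\nu_r$ already stresses, these differ already at $r=0$: one only has $\norm{\sigma^r a}_{L^2(G,B)}\leq\nu_r(a)$, with equality essentially forced only when $B=\C$, since $\norm{\int_G\sigma^{2r}aa^*}_B$ can stay small while $\int_G\norm{a(g)}_B^2\sigma^{2r}\,\textup{d}\mu(g)$ is large when the positive elements $a(g)a(g)^*$ point in many different directions of $B$ and cancel in module norm without being pointwise small. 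The smoothness condition defining $V_\ell^\infty(G,B)$ bounds only the module norm, and I see no general mechanism to pass from it to the genuinely larger $\nu_r$; this is precisely the $L^2$-versus-$L^\infty$ mismatch flagged in the introduction. Thus (a) as stated should be expected to fail for infinite-dimensional noncommutative $B$, and at best one would have to replace $S_\ell^\infty(G,B)$ by a module-norm variant, or impose a hypothesis on $B$ (e.g.\ finite-dimensionality) under which the two norms are comparable.
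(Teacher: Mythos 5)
Your overall assessment matches the paper's own: part (b) goes through, and part (a) is precisely where this version of the paper declares itself broken. For (b) your argument is essentially the paper's proof: reduce the holomorphic functional calculus to invertibility via the contour-integral formula, then transfer invertibility of the corner element through the spectral invariance of Theorem \ref{thm:2.4}.d. One repair is needed when $B$ is non-unital: $e_K$ is scalar-valued and hence only a multiplier of $C_r^* (G,B)$, so the element $x + (1 - e_K)$ does not live in $V_\ell^\infty (G,B)^+$ as you write it. The paper's device is to pass to the unitization $B^+$ of the coefficient algebra: embed $(e_K C_r^* (G, B) e_K)^+$ into $e_K C_r^* (G, B^+) e_K$ with unit $e_K$, apply Theorem \ref{thm:2.4}.d to $a + (1 - e_K)$ inside $V_\ell^\infty (G,B^+)^+$, cut back down via $a^{-1} = e_K (a^{-1} + 1 - e_K)$, and finally intersect with $(e_K C_r^* (G, B) e_K)^+$. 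Your computation $D(\lambda (e_K)) = 0$ is correct but not needed for this: $e_K \in C_c (G,B^+) \subset V_\ell^\infty (G,B^+)$ by Theorem \ref{thm:2.4}.a already suffices.

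For (a), your diagnosis coincides exactly with the gap the paper itself flags. Its proof computes $D^n (\lambda (a))(e_K) = \ell^n a$ --- the same content as your Leibniz expansion, with $\ell |_K = 0$ so that $c_K = 1$ --- and concludes only that $\ell^n a \in L^2 (G,B^+)$ in the Hilbert-module norm; the inline note then concedes, as you argue, that this norm differs from $\nu_0$ and that finiteness of $\nu_r (a)$ does not follow, the lemma carries the annotation \texttt{Probably not true!}, and the abstract declares the proof incomplete at exactly this step. Two refinements to your analysis. First, the positive elements $a(g) a(g)^*$ do not literally cancel; rather $\norm{\int_G \sigma^{2r} a a^* \,\textup{d}\mu}_B$ can stay bounded while $\int_G \norm{a(g)}_B^2 \sigma^{2r} \,\textup{d}\mu$ diverges when the $a(g)a(g)^*$ are nearly mutually orthogonal --- and twisted invariance invites exactly this, since along a double coset the values are $k (a(g) a(g)^*)$ with $k$ running over $K$. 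Second, the module bound is not entirely toothless: testing $\int_G \sigma^{2r} a a^* \,\textup{d}\mu$ against states of $B^+$ and using right-$K$-invariance (so that $\varphi (a a^*)$ and $\sigma$ are constant on each coset $gK$) yields $\sup_g \norm{a(g)}_B \sigma (g)^r < \infty$ for all $r$. What is irretrievably missing is the weight $\mu (K g K)^{1/2}$ appearing in \eqref{eq:2.11}, which by \eqref{eq:2.5} grows like $\delta_P (m)^{-1/2}$, i.e.\ exponentially in $\ell$; the $K$-averaging needed to make the state double-coset-invariant destroys the pointwise lower bound. So your verdict --- (a) holds for $B = \C$, where the two norms coincide, but is unsupported and presumably false for general noncommutative $B$ --- is precisely the paper's own.
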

\begin{proof}
(a) The unitization $B^+$ of $B$ is also a $G$-$C^*$-algebra, in a 
natural way. Theorem \ref{thm:2.4} also applies with $B^+$ instead of $B$. The identity 
element of $e_K C_r^* (G, B^+) e_K$ is $e_K$. 
For $a \in e_K V_\ell^\infty (G, B) e_K \subset e_K V_\ell^\infty (G, B^+) e_K$:
\[
D(\lambda (a)) (e_K) = \sigma (a * e_K) - a * (\sigma e_K) = \sigma a - a = \ell a .
\]
With induction we obtain $D^n (\lambda (a)) (e_K) = \ell^n a$. By assumption\\
$D^n (\lambda (a)) \in \mc B (L^2 (G,B^+))$, so 
\[
\ell^n a \in L^2 (G,B^+) \quad \forall n \in \Z_{\geq 0}.
\]
\texttt{The norm of $L^2 (G,B^+)$ differs from $\nu_0$, it is not clear 
whether the above implies that $\nu_r (a)$ is finite!}

This says that $\nu_r (a) < \infty$ for all $r \in \Z$, so 
\[
a \in S_\ell^\infty (G,B^+) \cap e_K V_\ell^\infty (G, B) e_K \subset
e_K S_\ell^\infty (G, B) e_K . 
\]
(b) Recall from \cite[\S A.1.5]{Bos} that the holomorphic functional calculi of\\
$(e_K V_\ell^\infty (G, B) e_K)^+$ and $(e_K C_r^* (G, B) e_K)^+$ can both be expressed as
\[
f(a) = (2 \pi i)^{-1} \int_\Gamma F(z) (z - a)^{-1} \textup{d} z ,
\]
where $\Gamma$ is a suitable contour around the spectrum of $a$, and $F$ is
a primitive of a holomorphic function $f$. From this expression we see that
it suffices to prove that every element of $(e_K V_\ell^\infty (G, B) e_K)^+$
which is invertible in $(e_K C_r^* (G, B) e_K)^+$, is already invertible in 
$(e_K V_\ell^\infty (G, B) e_K)^+$.

Let $a \in (e_K V_\ell^\infty (G, B ) e_K)^+ \cap \big( (e_K C_r^* (G, B) e_K)^+ \big)^\times$. 
Notice that $(e_K C_r^* (G, B) e_K)^+$ is naturally embedded in $e_K C_r^* (G, B^+) e_K$. 
We can identify its unit element with $e_K$. This has to be distinguished from the
unit element of $C_r^* (G,B^+)^+$, which we denote simply by 1. Then
$a + (1 - e_K)$ is invertible in $C_r^* (G,B^+)^+$, with inverse $a^{-1} + (1 - e_K)$.
By Theorem \ref{thm:2.4}.d
\[
a^{-1} + (1 - e_K) \in V_\ell^\infty (G,B^+)^+ .
\]
Then also 
\[
a^{-1} = e_K ( a^{-1} + 1 - e_K) \in V_\ell^\infty (G,B^+)^+ .
\]
At the same time $a^{-1} \in (e_K C_r^* (G, B ) e_K)^+$, so
\[
a^{-1} \in V_\ell^\infty (G,B^+)^+ \cap (e_K C_r^* (G, B ) e_K)^+ 
\subset (e_K V_\ell^\infty (G,B) e_K)^+ . \qedhere
\]
\end{proof}

From the density theorem for K-theory \cite[Th\'eor\`eme A.2.1]{Bos}, Theorem \ref{thm:2.4} 
and Lemma \ref{lem:2.6}.b we immediately conclude:

\begin{cor}\label{cor:3.1}
The Fr\'echet algebra homomorphisms 
\[
V_\ell^\infty (G, B) \to C_r^* (G, B) \text{ and }
e_K V_\ell^\infty (G, B) e_K \to e_K C_r^* (G, B) e_K
\]
induce isomorphisms on K-theory. 
\end{cor}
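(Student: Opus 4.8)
The plan is to derive both isomorphisms from a single tool, the density theorem for K-theory of Fr\'echet algebras \cite[Th\'eor\`eme A.2.1]{Bos}. That theorem says that if $\mc A$ is a dense Fr\'echet subalgebra of a Banach algebra $\mc D$, the inclusion $\mc A \to \mc D$ is continuous, and $\mc A$ is stable under the holomorphic functional calculus of $\mc D$ (equivalently, the inclusion is spectral: an element of $\mc A^+$ is invertible in $\mc A^+$ as soon as its image in $\mc D^+$ is invertible), then the induced map $K_* (\mc A) \to K_* (\mc D)$ is an isomorphism. The whole task therefore reduces to checking these hypotheses for each of the two inclusions, and both are genuine inclusions of subalgebras, so injectivity is automatic.

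For the first map $V_\ell^\infty (G, B) \to C_r^* (G, B)$, every hypothesis is already packaged in Theorem \ref{thm:2.4}. Parts a and b supply the Fr\'echet algebra structure together with the continuity and density of the inclusion. Parts c and d supply exactly what stability under the holomorphic functional calculus requires beyond the Banach setting: part d is the spectral invariance, and part c guarantees that the set of invertibles in $V_\ell^\infty (G, B)^+$ is open with continuous inversion, so that the contour integral $f(a) = (2\pi i)^{-1} \int_\Gamma F(z) (z-a)^{-1} \textup{d} z$ converges in the Fr\'echet topology. Thus \cite[Th\'eor\`eme A.2.1]{Bos} applies directly.

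For the second map $e_K V_\ell^\infty (G, B) e_K \to e_K C_r^* (G, B) e_K$, the crucial hypothesis, holomorphic closure, is furnished by Lemma \ref{lem:2.6}.b. The remaining hypotheses I would obtain by restriction to the corner: compression $x \mapsto e_K x e_K$ is continuous on both algebras, so $e_K V_\ell^\infty (G, B) e_K$ inherits its Fr\'echet topology and the continuity of the inclusion from Theorem \ref{thm:2.4}.b, while density holds because $e_K C_c (G, B) e_K$ is dense in $e_K C_r^* (G, B) e_K$ and is contained in $e_K V_\ell^\infty (G, B) e_K$. A second application of \cite[Th\'eor\`eme A.2.1]{Bos} then yields the K-theory isomorphism for the corner.

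The one genuinely delicate point is the passage from mere invertibility to full holomorphic stability in the Fr\'echet (as opposed to Banach) setting; in a Banach algebra the two coincide automatically, but here one must know in addition that the invertibles form an open set on which inversion is continuous. For the first inclusion this is precisely Theorem \ref{thm:2.4}.c and d, and for the corner it is encoded in the conclusion of Lemma \ref{lem:2.6}.b. I emphasize that Lemma \ref{lem:2.6}.a plays no role whatsoever in this deduction, so the corollary rests only on the holomorphic closure statement Lemma \ref{lem:2.6}.b together with Theorem \ref{thm:2.4}.
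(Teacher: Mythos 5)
Your proposal is correct and follows essentially the same route as the paper, whose proof consists precisely of invoking Bost's density theorem \cite[Th\'eor\`eme A.2.1]{Bos} together with Theorem \ref{thm:2.4} and Lemma \ref{lem:2.6}.b; your expanded verification of the hypotheses (including the inherited Fr\'echet structure and density for the corner) just makes explicit what the paper leaves implicit. Your closing observation that Lemma \ref{lem:2.6}.a plays no role here is also accurate and matches the paper's own citation pattern.
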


\section{Linear algebraic groups over non-archimedean local fields}
\label{sec:3}

In this paragraph $F$ is a non-archimedean local field and $\mc G$
is a connected reductive group defined over $F$. We endow $G = \mc G (F)$ with the
topology coming from the metric of $F$, making it into a locally compact totally
disconnected Hausdorff group. We denote the Bruhat--Tits building of $\mc G (F)$ by $X$. 
More generally our below arguments work for (possibly 
disconnected) quasi-reductive groups over non-archimedean local fields, by \cite{Sol2}.
But since every quasi-reductive group is embedded in a reductive group, nothing would
be gained by working in that generality. For background on the upcoming notions,
we refer to \cite{Tit}.

We fix a special vertex $x_0$ of $X$ and we let $G_{x_0}$ be its stabilizer in $G$. 
By the properness of the action, $G_{x_0}$ is compact. Because $G$ preserves 
the polysimplicial structure of $X$, the $G$-orbit of $x_0$ consists of vertices. 
Those lie discretely in $X$, so $G_{x_0}$ is open in $G$. We normalize the Haar
measure of $G$ so that $\mu (G_{x_0}) = 1$. 

We define
\[
\ell : G \to \R_{\geq 0}, \quad \ell (g) = d (g x_0, x_0 ).
\]
Since $G$ acts continuously and isometrically on $X$, this is a continuous length function 
and $\ell (g) = \ell (g^{-1})$. Notice that $\ell$ is biinvariant under $G_{x_0}$.

Let $S$ be a maximal $F$-split torus of $G$, such that $x_0$ lies in the apartment 
$\mh A_S$ of $X$ associated to $S$. Then $M := Z_G (S)$ is a minimal Levi subgroup 
of $G$. It has a unique maximal compact subgroup, namely $M_{\mr{cpt}} = M \cap G_{x_0}$. 
Then $M / M_{\mr{cpt}}$ can be identified with a lattice in the apartment $\mh A_S$.

Let $P = M U$ be a minimal parabolic subgroup of $G$, with unipotent radical $U$
and Levi factor $M$. (To be precise, one should say something like $\mc P$ is a
minimal parabolic $F$-subgroup of $\mc G$, and $G = \mc G (F), P = \mc P (F)$.) 
Recall the Iwasawa decomposition:
\begin{equation}\label{eq:2.3}
G = P G_{x_0}.  
\end{equation}
The torus $Z(M)^\circ$ acts algebraically on the Lie algebra of $U$, and that representation 
decomposes as a direct sum of algebraic characters $\chi : Z(M)^\circ \to F^\times$. 
Let $\norm{}_F$ denote the norm of $F$. As $Z(M)^\circ$ is cocompact in $M$, $\norm{\chi}_F$ 
extends uniquely to a character $M \to \R_{>0}$. We write
\[
M^+ = \{ m \in M : \norm{\chi (m)}_F \leq 1 \text{ for all } \chi 
\text{ which appear in Lie}(U) \} .
\]
The Cartan decomposition says that the natural map
\begin{equation}\label{eq:2.7}
M^+ / M_{\mr{cpt}} \to G_{x_0} \backslash G / G_{x_0} \quad \text{is bijective.} 
\end{equation}
Notice that $\delta_P (m) \leq 1$ for all $m \in M^+$, where $\delta_P : P \to \R_{>0}$ 
is the modular function. 
Using \eqref{eq:2.3} we extend $\delta_P$ to a right-$G_{x_0}$-invariant 
function on $G$. From \cite[\S II.1]{Wal} we recall that Harish-Chandra's $\Xi$-function
\[
\Xi (g) = \int_{G_{x_0}} \delta_P (kg)^{1/2} \textup{d}\mu (k) 
\]
is $G_{x_0}$-biinvariant. Recall from Proposition \ref{prop:3.5} that for $t \in \R_{> r_G}$ 
there exist unconditional completions $\mc S^t (G)$ and Banach algebras $\mc S_t (G,B)$ 

Let $K \subset G_{x_0}$ be an open subgroup. Then $K$ is also closed in the compact
Hausdorff group $G_{x_0}$, and hence compact. Since $\ell$ and $\Xi$ are
$G_{x_0}$-biinvariant, they descend to functions $K \backslash G / K \to \R$. For 
a right-$K$-invariant, twisted left-$K$-invariant measurable function $f : G \to B$,
\eqref{eq:2.12} and the $G$-invariance of the norm of $B$ imply that $\norm{f (g)}_B$ is 
$K$-biinvariant. Writing $\sigma = 1 + \ell$, we find that for such $f$:
\[
\norm{f}_{\mc S_t (G,B)} = \sup_{g \in \KGK} \norm{f(g)}_B 
\Xi (g)^{-1} \sigma (g)^t .
\]
Imposing (twisted) $K$-biinvariance enables us to fit functions in $\mc S_t (G,B)$:

\begin{lem}\label{lem:2.1}
For all $f \in e_K S_\ell^\infty (G,B) e_K$ and all $t > r_G$: 
$\norm{f}_{\mc S_t (G,B)} < \infty$.

In particular $\mc S_t (G,B)$ contains all elements of $V_\ell^\infty (G,B)$ 
that are right-invariant and twisted left-invariant under some compact open 
subgroup of $G_{x_0}$.
\end{lem}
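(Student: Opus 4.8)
The plan is to convert the $L^2$-type control coming from the norms $\nu_r$ into the $L^\infty$-type bound defining $\norm{\cdot}_{\mc S_t (G,B)}$, exploiting that twisted $K$-biinvariance makes $\norm{f(\cdot)}_B$ constant on the double cosets $KgK$. Write $\phi(g) = \norm{f(g)}_B$; as noted just before the lemma, $\phi$ is $K$-biinvariant, while $\ell$, $\sigma = 1 + \ell$ and $\Xi$ are $G_{x_0}$-biinvariant, hence also $K$-biinvariant. First I would decompose the integral defining $\nu_r(f)^2$ as a sum over $\KGK$, in which the summand attached to a double coset $KgK$ equals $\phi(g)^2 \sigma(g)^{2r} \mu(KgK)$. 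Dropping all but one term yields the pointwise estimate
\[
\phi(g) \leq \nu_r(f)\, \sigma(g)^{-r}\, \mu(KgK)^{-1/2} \qquad (g \in G,\; r \in \Z).
\]

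Next I would compare the volume of the small double coset $KgK$ with that of the maximal one $G_{x_0} g G_{x_0}$. Since $K$ is open in the compact group $G_{x_0}$, the index $N = [G_{x_0}:K]$ is finite; writing $G_{x_0}$ as a union of $N$ left and $N$ right $K$-cosets exhibits $G_{x_0} g G_{x_0}$ as a union of at most $N^2$ two-sided translates of $KgK$. As $G$ is unimodular, each translate has measure $\mu(KgK)$, so $\mu(G_{x_0} g G_{x_0}) \leq N^2 \mu(KgK)$, that is, $\mu(KgK)^{-1/2} \leq N\, \mu(G_{x_0} g G_{x_0})^{-1/2}$.

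The crux is then a lower bound for $\Xi(g)^2 \mu(G_{x_0} g G_{x_0})$. Using the Cartan decomposition \eqref{eq:2.7}, write $g \in G_{x_0} m G_{x_0}$ with $m \in M^+$. The standard harmonic analysis on $G$ (Harish-Chandra, Macdonald, \cite[\S II.1]{Wal}) gives the lower bound $\Xi(g) \geq \delta_P(m)^{1/2}$, while the volume $\mu(G_{x_0} g G_{x_0}) = [G_{x_0} : G_{x_0} \cap m G_{x_0} m^{-1}]$ is comparable to $\delta_P(m)^{-1}$, up to a factor polynomial in $\ell(g)$. Consequently there are $c > 0$ and $a \in \N$ with $\Xi(g)^2 \mu(G_{x_0} g G_{x_0}) \geq c\, \sigma(g)^{-a}$, equivalently $\Xi(g)^{-1} \mu(G_{x_0} g G_{x_0})^{-1/2} \leq c^{-1/2}\sigma(g)^{a/2}$ for all $g$. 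This is the step I expect to be the main obstacle, since it is where the specific geometry of the reductive group enters; everything else is formal. Combining the three estimates gives
\[
\phi(g)\,\Xi(g)^{-1}\sigma(g)^t \leq N c^{-1/2}\, \nu_r(f)\, \sigma(g)^{\,t - r + a/2},
\]
and choosing an integer $r \geq t + a/2$ makes the exponent nonpositive; as $\sigma \geq 1$, the right-hand side is bounded by $N c^{-1/2}\nu_r(f)$, uniformly in $g$. Taking the supremum over $\KGK$ yields $\norm{f}_{\mc S_t (G,B)} \leq N c^{-1/2}\nu_r(f)$, which is finite because $f \in e_K S_\ell^\infty (G,B) e_K$ guarantees $\nu_r(f) < \infty$.

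Finally, for the ``in particular'' assertion I would observe that an element $f \in V_\ell^\infty (G,B)$ which is right- and twisted left-invariant under a compact open $K \subseteq G_{x_0}$ lies in $e_K V_\ell^\infty (G,B) e_K$, hence in $e_K S_\ell^\infty (G,B) e_K$ by Lemma \ref{lem:2.6}.a; the first part then places it in $\mc S_t (G,B)$. I note that this last step rests entirely on the inclusion in Lemma \ref{lem:2.6}.a, which is precisely the point flagged as doubtful, so the ``in particular'' clause is only as secure as that lemma.
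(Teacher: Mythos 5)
Your proof is correct and follows essentially the same skeleton as the paper's: the Cartan decomposition \eqref{eq:2.7}, the volume estimate $\mu (KmK) \asymp \delta_P (m)^{-1}$ from \cite[p.~241]{Wal} (the paper's \eqref{eq:2.5}), the two-sided bound $C_1 \delta_P (m)^{1/2} \leq \Xi (m) \leq C_2 \delta_P (m)^{1/2} \sigma (m)^d$ from \cite[Lemma II.1.1]{Wal}, and Lemma \ref{lem:2.6}.a for the ``in particular'' clause. Two differences in routing are worth recording. First, where the paper invokes \cite[Lemma 9]{Vig} --- an \emph{equivalence} between $L^2$-rapid decay and the sup-condition \eqref{eq:2.11}, whose nontrivial direction requires the polynomial growth of $\KGK$ (which the paper establishes via the lattice $M / M_{\mr{cpt}}$ in the apartment $\mh A_S$) --- you use only the trivial direction, dropping all but one term of the double-coset decomposition of $\nu_r (f)^2$. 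Since the lemma asserts only finiteness of $\norm{f}_{\mc S_t (G,B)}$, this one-sided estimate indeed suffices, and it lets you bypass the polynomial-growth step entirely; that is a genuine (if small) simplification. Second, you obtain the volume comparison at level $K$ from the index bound $\mu (G_{x_0} g G_{x_0}) \leq [G_{x_0}:K]^2 \mu (KgK)$ (legitimate, as $G$ is unimodular) and then need the Macdonald-type estimates only for the special maximal compact $G_{x_0}$, whereas the paper applies \eqref{eq:2.5} directly with constants $C_K, C'_K$ depending on $K$; the two are interchangeable. The step you flagged as the main obstacle is precisely the content of \eqref{eq:2.5} together with \cite[Lemma II.1.1]{Wal}, so your hedge ``up to a factor polynomial in $\ell (g)$'' is safe --- in fact $\mu (G_{x_0} m G_{x_0}) \asymp \delta_P (m)^{-1}$ with no polynomial loss, the polynomial factor entering only through the upper bound for $\Xi$. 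Finally, you are right that the second assertion rests entirely on Lemma \ref{lem:2.6}.a, which the paper itself marks as problematic; your proof of the first assertion, like the paper's, is independent of that flagged point.
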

\begin{proof}
Since $K$ is open and compact, the space $\KGK$ is discrete and its elements
have finite volume (from the measure on $G$). 
Choose a set of representatives $k_i \; (i = 1,\ldots ,[G_{x_0}:K])$ for $G_{x_0} / K$ 
and a set of representatives $m_j (j \in J)$ for $M^+ / M_{\mr{cpt}}$. By the Cartan 
decomposition \eqref{eq:2.7}, the natural map
\begin{equation}\label{eq:2.8}
\{ k_{i'}^{-1} m_j k_i : 1 \leq i,i' \leq [G_{x_0} : K], j \in J \} \to \KGK
\end{equation}
is surjective. The map 
\begin{equation}\label{eq:2.9}
M / M_{\mr{cpt}} \to X : m M_{\mr{cpt}} \mapsto m x_0
\end{equation}
sends $M / M_{\mr{cpt}}$ bijectively to a lattice in the apartment $\mh A_S$. 
Combining \eqref{eq:2.8} and \eqref{eq:2.9} with the definition of $\ell$, 
we deduce that $\KGK$ has polynomial growth with respect to $\ell$. Knowing that, 
\cite[Lemma 9]{Vig} says that the rapid decay of $f$ (in the $L^2$-sense) 
is equivalent to
\begin{equation}\label{eq:2.11}
\sup_{g \in \KGK} \norm{f(g)}_B \mu (KgK)^{1/2} \sigma (g)^t < \infty
\quad \text{for all } t \in \R .
\end{equation}
Notice that $\mu (KgK)$ and $\sigma (g)$ are $G_{x_0}$-biinvariant.
From \cite[p. 241]{Wal} we see that there exist $C_K, C'_K \in \R_{>0}$ such that
\begin{equation}\label{eq:2.5}
C_K \delta_P (m)^{-1} \leq \mu (K m K) \leq C'_K \delta_P (m)^{-1}
\quad \text{for all } m \in M^+.
\end{equation}
By \eqref{eq:2.8} and \eqref{eq:2.5}, the condition \eqref{eq:2.11} is equivalent to
\begin{equation}\label{eq:2.10}
\sup_{i,i',j} \norm{f(k_{i'}^{-1} m_j k_i)}_B \delta_P (m_j)^{-1/2} 
\sigma (m_j)^t < \infty \quad \text{for all } t \in \R .
\end{equation}
We recall from \cite[Lemma II.1.1]{Wal} that there exist $C_1,C_2 \in \R_{>0}$ and 
$d \in \N$ such that
\[
C_1 \delta_P (m)^{1/2} \leq \Xi (m) \leq C_2 \delta_P (m)^{1/2} \sigma (m)^d
\quad \text{for all } m \in M^+.
\]
With that, \eqref{eq:2.10} becomes equivalent to
\[
\sup_{i,i',j} \norm{f(k_{i'}^{-1} m_j k_i)}_B \Xi (m_j)^{-1} \sigma (m_j)^t < \infty
\quad \text{for all } t \in \R  .
\]
In view of the $K$-biinvariance of $\norm{f(g)}_B$ and $G_{x_0}$-biinvariance of the other 
involved terms, this says that 
\[
\norm{f}_{\mc S_t (G,B)} = \sup_{g \in G} \norm{f(g)}_B \Xi (g)^{-1} \sigma (g)^t 
\]
is finite. For the second claim we use Lemma \ref{lem:2.6}.a.
\end{proof}

In view of Corollary \ref{cor:3.1}, Lemma \ref{lem:2.1} and Proposition \ref{prop:3.5},
every class from\\
$K_* ( e_K C_r^* (G, B) e_K)$ can be represented by elements of matrix algebras over\\
$\mc S_t (G,B)^+$ (for $t > r_G$). This enables us to apply Corollary \ref{cor:1.5}
and to prove:

\begin{thm}\label{thm:3.2}
Let $F$ be a non-archimedean local field and let $\mc G$ be a connected reductive algebraic 
group defined over $F$. Endow $G = \mc G (F)$ with the topology coming from the metric 
of $F$. Let $B$ be a $\sigma$-unital $G$-$C^*$-algebra. The assembly map
\[
\mu^B : K_*^\top (G,B) \to K_* (C_r^* (G,B)) 
\]
is a bijection.
\end{thm}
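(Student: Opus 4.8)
The plan is to read off injectivity from the work of Kasparov and Skandalis and to reduce surjectivity to Lafforgue's criterion, Corollary~\ref{cor:1.5}, with the weighted Schwartz algebras $\mc S_t (G,B)$ playing the role of the unconditional completion. The geometric input is that $G = \mc G(F)$, being the $F$-points of a reductive group over a non-archimedean field, acts properly (with compact stabilisers) and isometrically on its Bruhat--Tits building $X$, an affine building; hence $G$ and $X$ satisfy the standing hypotheses of Section~\ref{sec:1} and $X$ is a universal proper $G$-space, so that $K_*^\top(G,B) = K_*^G(X,B)$. Since $B$ is $\sigma$-unital, Theorem~\ref{thm:1.1}.b already gives injectivity of $\mu^B$ and identifies its image with $\gamma \cdot K_*(C_r^*(G,B))$. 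Everything then comes down to surjectivity: I must show that each class of $K_*(C_r^*(G,B))$ lies in the image of $K_*(\mc A(G,B)) \to K_*(C_r^*(G,B))$ for some unconditional completion $\mc A(G)$ dominating the reduced norm.

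The core step is to push every K-theory class into the twisted $K$-biinvariant corners, where the growth of functions can be controlled. As $G$ is totally disconnected, the compact open subgroups $K \subset G_{x_0}$ are cofinal among neighbourhoods of $1$, and the projections $e_K$ in the multiplier algebra form an increasing approximate unit, with $e_{K'} \ge e_K$ for $K' \subset K$ (since $e_K e_{K'} = e_K$) and $e_K \to 1$. Consequently $C_r^*(G,B)$ is the closure of $\bigcup_K e_K C_r^*(G,B) e_K$, and continuity of K-theory under direct limits yields $K_*(C_r^*(G,B)) = \varinjlim_K K_*(e_K C_r^*(G,B) e_K)$. Given $p$, I would fix $K$ with $p$ coming from $K_*(e_K C_r^*(G,B) e_K)$. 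Corollary~\ref{cor:3.1} makes $e_K V_\ell^\infty(G,B) e_K \hookrightarrow e_K C_r^*(G,B) e_K$ a K-theory isomorphism, so $p$ is represented by an idempotent (in the $K_0$ case), resp.\ an invertible (in the $K_1$ case), matrix over $(e_K V_\ell^\infty(G,B) e_K)^+$. By Lemma~\ref{lem:2.1} these twisted $K$-biinvariant entries already lie in $\mc S_t(G,B)$ for every $t > r_G$; as $\mc S_t(G,B)$ is a Banach algebra (Proposition~\ref{prop:3.5}.b) and a subalgebra of $C_r^*(G,B)$ (Proposition~\ref{prop:3.5}.c), the same matrix is idempotent, resp.\ invertible, over $\mc S_t(G,B)^+$, and its class maps to $p$. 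Thus $p$ lies in the image of $K_*(\mc S_t(G,B)) \to K_*(C_r^*(G,B))$.

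It then remains to confirm the two bullets of Corollary~\ref{cor:1.5} for $\mc A(G) = \mc S_t(G)$. The weight $\Xi(g)^{-1}\sigma(g)^t$ is $G_{x_0}$-biinvariant, and since $G$ is unimodular with $\Xi(g^{-1}) = \Xi(g)$ and $\sigma(g^{-1}) = \sigma(g)$, the substitution $g \mapsto g^{-1}$ gives $\norm{f^*}_{\mc S_t(G)} = \norm{f}_{\mc S_t(G)}$. Proposition~\ref{prop:3.5}.c only provides a continuous inclusion $\mc S_t(G) \to C_r^*(G)$, i.e.\ $\norm{f}_{C_r^*(G)} \le C \norm{f}_{\mc S_t(G)}$; replacing the norm of $\mc S_t(G)$ by $\max(C,1)$ times itself keeps it unconditional and submultiplicative, leaves its K-theory and the induced map to $C_r^*(G,B)$ unchanged, and now dominates the reduced norm. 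With both hypotheses of Corollary~\ref{cor:1.5} in force for every class $p$, the corollary yields bijectivity of $\mu^B$.

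The formal assembly above is routine; the hard part is the single place where a purely $C^*$-algebraic, hence $L^2$-controlled, K-theory class is forced to admit a representative of controlled growth, namely Lemma~\ref{lem:2.1} and the comparison underlying Lemma~\ref{lem:2.6}. The subtlety is that for a noncommutative coefficient algebra $B$ the Hilbert-module norm on $L^2(G,B)$ differs from the integral seminorm $\nu_0$, so the inference from $\ell^n a \in L^2(G,B^+)$ for all $n$ to finiteness of all $\nu_r(a)$, and thence to finiteness of $\norm{\cdot}_{\mc S_t(G,B)}$, is not automatic. It is exactly here that twisted $K$-biinvariance and the polynomial growth of $\KGK$ must do the work, and this is the step where the argument is most fragile and most likely to require strengthening before it applies to arbitrary $B$.
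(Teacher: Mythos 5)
Your proposal follows essentially the same route as the paper's own proof: injectivity and the image identification via Theorem \ref{thm:1.1}, the corner decomposition $K_*(C_r^*(G,B)) = \varinjlim_K K_*(e_K C_r^*(G,B) e_K)$ from the approximate unit of projections $e_K$, then Corollary \ref{cor:3.1} and Lemma \ref{lem:2.1} to represent each class over $\mc S_t (G,B)^+$, and finally Corollary \ref{cor:1.5} after making the norm of $\mc S_t (G)$ $*$-invariant and rescaling it to dominate the reduced norm. You also correctly isolate the one genuinely fragile point --- the inference from $\ell^n a \in L^2 (G,B^+)$ to finiteness of the seminorms $\nu_r (a)$ in Lemma \ref{lem:2.6}.a, where the Hilbert-module norm on $L^2 (G,B)$ differs from $\nu_0$ --- which is exactly the step the paper itself flags as incomplete, so your argument is faithful to the paper, including its acknowledged gap.
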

\begin{proof}
As $G$ is totally disconnected and locally compact, its identity element admits a
neighborhood basis consisting of compact open subgroups $K$ \cite[\S 3.4.6]{Bou}. 
In particular $\bigcup_K e_K C_c (G , B) e_K$ is dense in $C_c (G,B)$. We partially
order these subgroups $K$ by reverse inclusion. Then 
\[
\{ e_K : K \subset G \text{ compact open subgroup} \}
\]
is an approximate identity consisting of projections in the multiplier algebra of
$C_r^* (G,B)$. Consequently
\begin{equation}\label{eq:3.3}
C_r^* (G,B) = \varinjlim_K e_K C_r^* (G,B) e_K  , 
\end{equation}
where the limits are taken in the category of $C^*$-algebras.
By the continuity of topological K-theory
\begin{equation}\label{eq:3.2} 
K_* (C_r^* (G,B)) = \varinjlim_K K_* ( e_K C_r^* (G, B) e_K) . 
\end{equation}
Pick any class $p \in K_* (C_r^* (G,B))$. By \eqref{eq:3.2} and Corollary \ref{cor:3.1}
it lies in the image of $K_* ( e_K V_\ell^\infty (G, B) e_K )$, for a suitable compact
open subgroup $K \subset G_{x_0}$. Then Lemma \ref{lem:2.1} and Proposition \ref{prop:3.5}.b
imply that $p$ can be represented by an element in a matrix algebra over the Banach
algebra $\mc S_t (G,B)^+$, for any $t > r_G$. In particular $p$ lies in the image of 
$K_* (\mc S_t (G,B)) \to K_* (C_r^* (G,B))$. 
This holds for arbitrary $p$, so $K_* (\mc S_t (G,B)) \to K_* (C_r^* (G,B))$ is
surjective. 

In Proposition \ref{prop:3.5}.a we saw that $\mc S_t (G)$ is an unconditional
completion of $C_c (G)$. As $\Xi (g^{-1}) = \Xi (g)$ \cite[Lemme II.1.4]{Wal},
\[
\norm{f^*}_{\mc S_t (G)} = \norm{f}_{\mc S_t (G)} \quad \text{for all } f \in C_c (G).
\]
Proposition \ref{prop:3.5}.c enables us to rescale this norm so that 
\[
\norm{f}_{C_r^* (G)} \leq \norm{f}_{\mc S_t (G)} \quad \text{for all } f \in C_c (G).
\]
Now we checked all the assumptions of Corollary \ref{cor:1.5}, so we can finally 
apply that result.
\end{proof}

Using the permanence properties of the Baum--Connes conjecture with coefficients
(discussed in the appendix), we can generalize Theorem \ref{thm:3.2} to larger classes of
groups.

\begin{thm}\label{thm:3.3}
Let $G$ be as in Theorem \ref{thm:3.2} and let $H$ be a closed subgroup of $G$.

Let $G'$ be a second countable, exact, locally compact group with an amenable closed 
normal subgroup $N$ such that $G'/N$ is isomorphic (as topological group) to $H$. 

Then $H$ and $G'$ satisfy BC with arbitrary coefficients. 
\end{thm}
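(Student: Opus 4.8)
The plan is to derive both assertions from Theorem \ref{thm:3.2} by two successive invocations of the permanence properties of BCC recorded in the appendix, with the exactness hypotheses serving only to reduce throughout to separable coefficient algebras.

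First I would observe that Theorem \ref{thm:3.2} establishes BCC for $G = \mc G(F)$. The group $G$ is exact by \cite{KiWa}, and C$^*$-exactness passes to closed subgroups, so $H$ is exact as well. By Corollary \ref{cor:A.2} it therefore suffices to verify the assembly isomorphism for $H$ with separable coefficients. For such coefficients the permanence of BCC under passage to closed subgroups \cite{ChEc2} applies directly: since $H$ is a closed subgroup (second countable, as $G$ is) of the group $G$ that satisfies BCC, it follows that $H$ satisfies BCC.

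For $G'$ the hypotheses provide a short exact sequence
\[
1 \to N \to G' \to H \to 1
\]
of second countable locally compact groups in which $N$ is amenable and the quotient $H$ has just been shown to satisfy BCC. Since $G'$ is assumed exact, Corollary \ref{cor:A.2} again reduces us to separable coefficients, and then the permanence of BCC under extensions with amenable kernel \cite{CEO} yields that $G'$ satisfies BCC. This settles both claims.

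The argument is essentially formal once Theorem \ref{thm:3.2} is in hand; the mathematical content sits entirely in the cited permanence theorems. The only delicate point, and the likeliest source of friction, is to confirm that the precise standing hypotheses of \cite{ChEc2} and \cite{CEO} --- second countability, exactness, and separability of the coefficient algebras --- are exactly those supplied here, so that each permanence result can be quoted without modification rather than re-derived.
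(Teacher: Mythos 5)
Your proposal is correct and follows essentially the same route as the paper: the paper's proof invokes Theorem \ref{thm:5.3} (which packages the closed-subgroup permanence of \cite{ChEc2} with Corollary \ref{cor:A.2}) for $H$, and then Theorem \ref{thm:A.4} (which packages \cite{CEO} with the same corollary) for $G'$, exactly as you unwind by hand. Your explicit remark that exactness of $H$ is inherited from $G$ via \cite{KiWa}, so that Corollary \ref{cor:A.2} applies to $H$, is a point the paper leaves implicit, and your only imprecision --- calling on Theorem \ref{thm:3.2} for ``BCC'' when it literally gives BC with $\sigma$-unital, hence all separable, coefficients --- is harmless since only separable coefficients are needed.
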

\begin{proof}
Every separable $C^*$-algebra is $\sigma$-unital, so Theorem \ref{thm:3.2} says in 
particular that $G$ satisfies BC with arbitrary separable coefficients.
Apply Theorem \ref{thm:5.3} and to $G$ and $H$ to get the desired result for $H$.

Then apply Theorem \ref{thm:A.4} to $H,G'$ and $N$ to obtain the claim for $G'$.
\end{proof}

We note that Theorem \ref{thm:3.3} applies to every linear algebraic group over $F$,
because such a group can be embedded as a closed subgroup in $GL_n (F)$ for some
$n \in \N$.

\section{Linear algebraic groups over global fields}
\label{sec:4}

In this section we consider linear algebraic groups $\mc G$ defined over
a global field $k$. The points of $\mc G$ over the ring of adeles of $k$ form a locally compact 
group, usually called an adelic group.

BC (with trivial coefficients) for reductive adelic groups has been obtained by Baum, Millington 
and Plymen in \cite{BMP1}. Later Chabert, Echterhoff and Oyono-Oyono \cite[Theorem 0.7]{CEO} 
were able to show that all linear algebraic adelic groups over number fields satisfy BC. 
We will generalize these results to all linear algebraic groups and all global fields. 
Like the aforementioned work, our proofs rely on the following.

\begin{thm}\textup{\cite[Theorem 1.1]{BMP2}} \label{thm:4.2} \\
Let $G$ be a second countable locally compact group. Let $(G_n )_{n=1}^\infty$ be an increasing
sequence of open subgroups, with $\bigcup_{n=1}^\infty G_n = G$. Let $B$ be a $G$-$C^*$-algebra
and suppose that each $G_n$ satisfies BC with coefficients $B$. Then $G$ satisfies the 
Baum--Connes conjecture with coefficients $B$. 
\end{thm}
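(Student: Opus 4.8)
The plan is to exhibit both sides of the assembly map for $G$ as filtered colimits over the subgroups $G_n$ and to show that $\mu^B$ for $G$ is the colimit of the maps $\mu^B$ for the $G_n$; since a filtered colimit of isomorphisms of abelian groups is again an isomorphism, the theorem follows. Concretely, for each inclusion $G_n \subseteq G_{n+1}$ (and $G_n \subseteq G$) there are induction maps on both the topological and the analytic side, intertwined by the assembly maps, and the goal is to verify
\[
K_*^\top(G, B) = \varinjlim_n K_*^\top(G_n, B), \qquad K_*(C_r^*(G, B)) = \varinjlim_n K_*(C_r^*(G_n, B)) .
\]

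First the analytic side. For an open subgroup $H \le G$, extension by zero embeds $C_c(H, B)$ into $C_c(G, B)$, and because $H$ is open the Hilbert module $L^2(G, B)$ decomposes as the orthogonal direct sum $\bigoplus_{Hg \in H \backslash G} L^2(Hg, B)$ of copies of the left-regular $H$-representation. Hence the embedding is isometric for the reduced norms and extends to an injective $*$-homomorphism $C_r^*(H, B) \hookrightarrow C_r^*(G, B)$. Applying this to the $G_n$ and using that $\bigcup_n C_c(G_n, B) = C_c(G, B)$ is dense in $C_r^*(G, B)$ identifies $C_r^*(G, B)$ with the $C^*$-inductive limit $\varinjlim_n C_r^*(G_n, B)$. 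Continuity of $K$-theory then gives $K_*(C_r^*(G, B)) = \varinjlim_n K_*(C_r^*(G_n, B))$, with structure maps induced by the inclusions.

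Next the topological side. A universal space $X$ for proper $G$-actions is, by restriction, also universal for proper $G_n$-actions, since the compact-stabilizer and fixed-point conditions are inherited by closed (in particular open) subgroups. For an open $H$ and an $H$-cocompact proper $H$-subspace $Z \subseteq X$, the induced space $G \times_H Z$ is $G$-cocompact because $G/H$ is discrete and $G \backslash (G \times_H Z) \cong H \backslash Z$ is compact; combining the canonical $G$-map $G \times_H Z \to X$ with the induction isomorphism $KK_*^H(C_0(Z), B) \cong KK_*^G(C_0(G \times_H Z), B)$ for open subgroups yields induction maps $\ind_{G_n}^G : K_*^\top(G_n, B) \to K_*^\top(G, B)$ compatible with $G_n \subseteq G_{n+1}$. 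That these realize $K_*^\top(G, B)$ as the colimit I would deduce by cofinality: any $G$-cocompact $Y \subseteq X$ equals $G \cdot C$ for some compact $C$, so $Z_n := G_n \cdot C$ is $G_n$-cocompact with $G \cdot Z_n = Y$ and $\bigcup_n Z_n = Y$, which lets one compare $KK_*^G(C_0(Y), B)$ with $\varinjlim_n KK_*^{G_n}(C_0(Z_n), B)$; a parallel argument shows a class killed in $K_*^\top(G,B)$ is already killed at some finite level.

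Finally, naturality of $\mu^B$ with respect to induction from open subgroups provides, for each $n$, a commutative square
\[
\begin{array}{ccc}
K_*^\top(G_n, B) & \xrightarrow{\;\mu^B_{G_n}\;} & K_*(C_r^*(G_n, B)) \\
\downarrow & & \downarrow \\
K_*^\top(G, B) & \xrightarrow{\;\mu^B_G\;} & K_*(C_r^*(G, B))
\end{array}
\]
so that $\mu^B_G = \varinjlim_n \mu^B_{G_n}$. Since each $\mu^B_{G_n}$ is an isomorphism by hypothesis and filtered colimits are exact on abelian groups, $\mu^B_G$ is an isomorphism. The hard part will be the third step: proving continuity of the topological side along this tower and compatibility of the assembly map with open-subgroup induction. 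Both rest on the induction isomorphism in equivariant $KK$-theory for open subgroups together with the cofinality argument among $G$-cocompact subsets of $X$, whereas the analytic side and the final colimit formalism are comparatively routine.
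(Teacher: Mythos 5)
The paper itself offers no proof of this statement: it is imported verbatim from \cite[Theorem 1.1]{BMP2}, so the only meaningful comparison is with the proof there, whose architecture your proposal reproduces exactly (analytic side as a $C^*$-inductive limit over the open subgroups plus continuity of K-theory; topological side via induction from open subgroups; naturality of assembly; colimit of isomorphisms). Your analytic half is correct and complete as written, and you are right that no exactness hypothesis is needed: for an open subgroup the inclusion $C_c (G_n ,B) \subset C_c (G,B)$ is isometric for the reduced norms because $L^2 (G,B)$ decomposes over cosets, and every compact support set is swallowed by some $G_n$ since the $G_n$ are open, increasing and cover $G$.

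The genuine gap is where you yourself place it --- continuity of $K_*^\top$ along the tower --- and the mechanism you sketch for it would fail as stated. First, $Z_n = G_n \cdot C$ is \emph{closed} in $Y = G \cdot C$ (it is the image of $G_n \times C$ under the action map, which is proper on $G \times C$), not open, so extension by zero $C_0 (Z_n) \to C_0 (Y)$ is unavailable, and the maps one does get, from the restriction $C_0 (Y) \to C_0 (Z_n)$, induce $KK^{G_n}_* (C_0 (Z_n),B) \to KK^{G_n}_* (C_0 (Y),B)$ --- the wrong direction for the compression you want. Second, $KK_* (-,B)$ is not continuous in its first variable, so $\bigcup_n Z_n = Y$ cannot by itself yield any comparison of $KK^G_* (C_0 (Y),B)$ with a limit of $KK^{G_n}$-groups. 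The correct device, and the one underlying \cite{BMP2}, is a swallowing lemma: since the action is proper, $T = \{ g \in G : gC \cap C \neq \emptyset \}$ is compact, hence $T \subset G_N$ for some $N$; for $n \geq N$ the $G$-map $G \times_{G_n} Z_n \to Y$, $[g,z] \mapsto gz$, is bijective (if $g a c = g' a' c'$ with $a,a' \in G_n$, $c,c' \in C$, then $a^{-1} g^{-1} g' a' \in T \subset G_n$, forcing $g^{-1}g' \in G_n$ and hence $[g,ac]=[g',a'c']$), and being an equivariant continuous bijection of cocompact proper $G$-spaces it is proper and therefore a homeomorphism. Thus every $G$-cocompact piece of $X$ is \emph{literally induced} from $G_n$ for $n$ large, and the open-subgroup induction isomorphism $KK^{G_n}_* (C_0 (Z_n),B) \cong KK^G_* (C_0 (G \times_{G_n} Z_n),B)$, combined with naturality of the assembly map under induction, gives surjectivity of $\varinjlim_n K_*^\top (G_n,B) \to K_*^\top (G,B)$; injectivity follows by applying the same swallowing argument to a larger cocompact subset witnessing the vanishing of a class. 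With your third step replaced by this lemma, your proof closes up and coincides with the cited one.
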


Let $\mb A_{k,\mr{fin}}$ be the ring of finite adeles of $k$, that is, the restricted product 
of the non-archimedean completions $k_v$. Let $(v_i )_{i=1}^n$ be an ordering of the finite 
places of $k$ and let $\mf o_{v_i}$ denote the ring of integers of $k_{v_i}$. Then 
$\mb A_{k,\mr{fin}}$ can be expressed as the increasing union of the open subrings
\begin{equation}\label{eq:4.1}
\mb A_{k,n} := k_{v_1} \times \cdots \times k_{v_n} \times \prod\nolimits_{i>n} \mf o_{v_i} .
\end{equation}
The ring of adeles $\mb A_k$ is the direct product $\mb A_{k,\mr{fin}} \times 
\prod_{v | \infty} k_v$, where the latter product runs over all infinite places of $k$. When
$k$ is a global function field, there are no infinite places and $\mb A_k = \mb A_{k,\mr{fin}}$.
On the other hand, every number field does possess infinite places (but only finitely many).
Like in \eqref{eq:4.1} we can write $\mb A_k$ as the increasing union of the open subrings
\begin{equation}\label{eq:4.2}
\mb A_{k,n} \times \prod_{v | \infty} k_v = k_{v_1} \times \cdots \times k_{v_n} \times 
\prod_{i>n} \mf o_{v_i} \times \prod_{k|\infty} k_v .
\end{equation}
Via the diagonal embedding, $k$ can be realized as a discrete cocompact subring of $\mb A_k$
\cite[Theorem IV.2.2]{Wei}.

\begin{thm}\label{thm:4.3}
Let $\mc G$ be a linear algebraic group defined over a global field $k$. 
\enuma{
\item $\mc G (\mb A_{k,\mr{fin}})$ satisfies BC with arbitrary coefficients.
\item Suppose that, for every infinite place $v$ of $k$, $\mc G (k_v)$ satisfies BC with
arbitrary separable coefficients (e.g. $\mc G (k_v)$ is compact or solvable or, more generally, 
amenable). Then the adelic group $\mc G (\mb A_k)$ satisfies BC with coefficients.
}
\end{thm}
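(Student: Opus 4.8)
The plan is to build $\mc G(\mb A_{k,\mr{fin}})$ and $\mc G(\mb A_k)$ up from the local results of Section~\ref{sec:3} using three permanence operations: increasing unions (Theorem~\ref{thm:4.2}), amenable extensions (Theorem~\ref{thm:A.4}), and direct products. For part~(a) I would first write $\mc G(\mb A_{k,\mr{fin}})$ as the increasing union of the open subgroups $\mc G(\mb A_{k,n})$ coming from \eqref{eq:4.1}, so that by Theorem~\ref{thm:4.2} it suffices to treat each $\mc G(\mb A_{k,n})$ for fixed $n$. Since $\mc G$ is defined over $k$, applying $\mc G(-)$ to the product decomposition of $\mb A_{k,n}$ gives
\[
\mc G(\mb A_{k,n}) \;\cong\; \Big( \prod_{i=1}^n \mc G(k_{v_i}) \Big) \times C_n, \qquad C_n := \prod_{i>n} \mc G(\mf o_{v_i}).
\]
Here $C_n$ is a profinite, hence compact and amenable, closed normal subgroup, and the quotient is the finite product $\prod_{i=1}^n \mc G(k_{v_i})$. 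Thus, once the finite product is known to satisfy BCC, Theorem~\ref{thm:A.4} (applied with the amenable normal subgroup $C_n$, using that exactness of all groups in sight follows from \cite{KiWa}) yields BCC for $\mc G(\mb A_{k,n})$.

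The crux is therefore a direct-product permanence statement: if second countable exact groups $G_1, G_2$ both satisfy BCC with arbitrary coefficients, then so does $G_1 \times G_2$. I would establish this by factoring the assembly map. Regarding $B$ first as a $G_2$-$C^*$-algebra, the reduced crossed product $C_r^*(G_2, B)$ carries a residual $G_1$-action, and one has the iterated crossed product identification $C_r^*(G_1 \times G_2, B) \cong C_r^*(G_1, C_r^*(G_2, B))$; choosing a universal proper $(G_1 \times G_2)$-space of the form $X_1 \times X_2$, the assembly map for $G_1 \times G_2$ decomposes as the assembly for $G_2$ carried out $G_1$-equivariantly, followed by the $G_1$-assembly with coefficients $C_r^*(G_2, B)$. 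The first factor is an isomorphism because $G_2$ satisfies BCC with arbitrary coefficients, and the second because $G_1$ satisfies BCC with the coefficient algebra $C_r^*(G_2, B)$. The essential point — and the main obstacle — is that this argument consumes \emph{arbitrary} coefficients, not merely trivial ones: $G_1$ must handle the genuinely non-trivial algebra $C_r^*(G_2, B)$, and no amenable-extension shortcut is available because neither factor of a product of non-amenable local groups is amenable. This is exactly where Theorem~\ref{thm:3.3}, supplying BCC with arbitrary coefficients for each $\mc G(k_{v_i})$, is indispensable; iterating over the $n$ factors then gives BCC for the finite product. If $B$ is separable then so is $C_r^*(G_2,B)$, so the factorization can be run with separable coefficients throughout and upgraded to arbitrary coefficients at the end via exactness and Corollary~\ref{cor:A.2}.

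For part~(b) I would decompose $\mc G(\mb A_k) \cong \mc G(\mb A_{k,\mr{fin}}) \times \prod_{v \mid \infty} \mc G(k_v)$, a product of second countable exact groups with only finitely many archimedean factors. Part~(a) gives BCC with arbitrary coefficients for $\mc G(\mb A_{k,\mr{fin}})$, and the standing hypothesis supplies BCC with arbitrary separable coefficients for each $\mc G(k_v)$ with $v \mid \infty$, promoted to arbitrary coefficients by Corollary~\ref{cor:A.2}. Applying the direct-product permanence of the previous paragraph finitely many times then yields BCC with arbitrary coefficients for $\mc G(\mb A_k)$, which is the assertion of part~(b).
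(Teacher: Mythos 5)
Your overall architecture coincides with the paper's: write the adelic group as an increasing union of open subgroups via \eqref{eq:4.1}--\eqref{eq:4.2}, reduce by Theorem \ref{thm:4.2} to groups of the form $\mc G (k_{v_1}) \times \cdots \times \mc G (k_{v_n}) \times \prod_{i>n} \mc G (\mf o_{v_i})$ (times the archimedean factors in part (b)), invoke Theorem \ref{thm:3.3} for the finitely many local factors, and lift separability at the end with Corollary \ref{cor:A.2}. Your one harmless deviation is to strip off the compact tail $C_n = \prod_{i>n} \mc G (\mf o_{v_i})$ by the amenable-extension Theorem \ref{thm:A.4} (compact implies amenable); the paper instead keeps $C_n$ as one more factor of a finite direct product, compact groups satisfying BCC by \cite{Jul}. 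Either route is fine, and your handling of part (b) --- splitting off $\prod_{v \mid \infty} \mc G (k_v)$ at the end rather than carrying it inside each $G_n$ --- is a cosmetic rearrangement of the paper's argument.

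The genuine problem sits exactly at what you yourself call the crux: the finite direct-product permanence. The paper does not prove this statement; it cites \cite[Theorem 3.17.i]{ChEc2}. Your factorization sketch --- $C_r^* (G_1 \times G_2, B) \cong C_r^* (G_1, C_r^* (G_2,B))$, universal space $X_1 \times X_2$, assembly equals a partial $G_2$-assembly followed by the $G_1$-assembly with coefficients $C_r^* (G_2,B)$ --- is indeed the skeleton of the Chabert--Echterhoff proof, but the step you dispatch in one clause (``the first factor is an isomorphism because $G_2$ satisfies BCC'') is precisely the nontrivial content. What is needed is bijectivity of the partial assembly map $K_*^\top (G_1 \times G_2, B) \to K_*^\top \big( G_1, C_r^* (G_2,B) \big)$, and this is \emph{not} a formal consequence of $G_2$ satisfying BCC with any single coefficient algebra: one must cut the $G_1$-side down to $G_1$-compact pieces $X'$ of the universal space, identify the resulting coefficient algebras (proper/induced algebras over $X'$ carrying a commuting $G_2$-action), check that $G_2$-assembly with each of these separable coefficients is an isomorphism, and then verify that all the direct limits and Kasparov products interchange correctly --- this occupies a substantial portion of \cite{ChEc2} and is where induction from compact subgroups enters. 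As a self-contained argument your proposal therefore has a gap at its declared central step; replacing the sketch by a citation of \cite[Theorem 3.17.i]{ChEc2} (applied with separable $B$, then upgraded via exactness and Corollary \ref{cor:A.2}) closes it and makes your proof essentially identical to the paper's.
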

\begin{proof}
The proof of part (a) is analogous to that of part (b) and slightly simpler, so we omit it.\\
(b) By \eqref{eq:4.1} $\mc{G}(\mb{A}_{k,\mr{fin}})$ is the increasing union of its open
subgroups $\mc G (\mb A_{k,n}) \times \prod_{v | \infty} \mc G (k_v)$. By Theorem \ref{thm:4.2} 
it suffices to establish the theorem for each of the subgroups 
\[
G_n := \mc G (\mb A_{k,n})  \times \prod_{v | \infty} \mc G (k_v) = 
\mc G (k_{v_1}) \times \cdots \times \mc G (k_{v_n}) \times 
\prod_{i > n} \mc G (\mf o_{v_i}) \times \prod_{v | \infty} \mc G (k_v) .
\]
By Theorem \ref{thm:3.3} each $\mc G (k_{v_i})$ satisfies BCC. By Tychonoff's
Theorem the product of compact groups $\prod\nolimits_{i > n} \mc G (\mf o_{v_i})$ 
is again compact. By \cite[Theorem 3.17.i]{ChEc2} BC with separable coefficients is inherited 
by finite direct products of groups, so $G_n$ satisfies BC with arbitrary separable coefficients. 
With Corollary \ref{cor:A.2} we can lift the separability requirement.
\end{proof}

With the permanence properties of BCC from the appendix, we can generalize Theorem \ref{thm:4.3}.
For $\mc G (k)$ the below was already stated in \cite[Theorem 1.3]{BMP2}.

\begin{cor}\label{cor:4.6}
Let $\mc G$ be a linear algebraic group defined over a global field $k$ and suppose 
that, for every infinite place $v$ of $k$, $\mc G (k_v)$ is amenable. Let $H$ be a closed
subgroup of $\mc G (\mb A_k)$, for instance $\mc G (k)$ with the discrete topology.

Let $G'$ be a second countable, exact, locally compact group with an amenable closed 
normal subgroup $N$ such that $G'/N$ is isomorphic (as topological group) to $H$. 

Then $H$ and $G'$ satisfy BC with arbitrary coefficients. 
\end{cor}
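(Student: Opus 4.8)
The plan is to reproduce, in structure, the argument used for Theorem \ref{thm:3.3}, substituting the adelic input Theorem \ref{thm:4.3}(b) for the local input Theorem \ref{thm:3.2}. The amenability condition imposed at the infinite places is present precisely so that the hypothesis of Theorem \ref{thm:4.3}(b) is satisfied: an amenable Lie group certainly obeys BC with arbitrary separable coefficients (indeed with arbitrary coefficients), so the stated condition on $\mc G(k_v)$ for $v \mid \infty$ feeds directly into that theorem.

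First I would invoke Theorem \ref{thm:4.3}(b) to conclude that the adelic group $\mc G(\mb A_k)$ satisfies BC with arbitrary coefficients, and in particular with every separable coefficient algebra. Next, since $H$ is by hypothesis a closed subgroup of $\mc G(\mb A_k)$, I would apply the permanence of the conjecture under passage to closed subgroups (Theorem \ref{thm:5.3}) to transfer BC with separable coefficients from $\mc G(\mb A_k)$ to $H$. Then, because $N$ is an amenable closed normal subgroup of $G'$ with $G'/N \cong H$, I would apply the permanence under extensions by amenable normal subgroups (Theorem \ref{thm:A.4}) to the triple $H, G', N$, thereby obtaining BC with separable coefficients for $G'$. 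This is exactly the two-step chain carried out in the proof of Theorem \ref{thm:3.3}.

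It then remains only to discard the separability hypothesis, and here I would appeal to exactness: $\mc G(\mb A_k)$ is exact by \cite{KiWa}, exactness is inherited by the closed subgroup $H$, and $G'$ is exact by assumption. Corollary \ref{cor:A.2} therefore upgrades BC with separable coefficients to BC with arbitrary coefficients, simultaneously for $H$ and for $G'$, which is the desired statement. Given the scaffolding already assembled, there is no genuine obstacle in this final argument: the corollary is a direct concatenation of Theorem \ref{thm:4.3}, the two permanence results, and Corollary \ref{cor:A.2}, with all the analytic content lying upstream. The single point requiring mild care is the verification that $H$ is exact (so that Corollary \ref{cor:A.2} is applicable), and this is immediate from the exactness of $\mc G(\mb A_k)$ together with the stability of exactness under closed subgroups.
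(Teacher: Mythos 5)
Your proposal is correct and follows essentially the same route as the paper: Theorem \ref{thm:4.3}(b) for $\mc G(\mb A_k)$ (with amenability at the infinite places supplying its hypothesis via \cite{HiKa}), then Theorem \ref{thm:5.3} for the closed subgroup $H$, then Theorem \ref{thm:A.4} for the extension $G'$. Your final explicit appeal to Corollary \ref{cor:A.2} and the exactness of $H$ is harmless but redundant, since Theorems \ref{thm:5.3} and \ref{thm:A.4} as stated already conclude BCC with arbitrary (not just separable) coefficients.
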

\begin{proof}
Apply Theorems \ref{thm:4.3} and \ref{thm:5.3} and to $G$ and $H$ to get the desired result 
for $H$. Since $k$ embeds discretely in $\mb A_k$, $\mc G (k)$ embeds in $\mc G (\mb A_k)$ 
as a discrete subgroup, and it is an example of such an $H$.

Then apply Theorem \ref{thm:A.4} to $H,G'$ and $N$ to obtain the claim for $G'$.
\end{proof}

Unfortunately part (b) of Theorem \ref{thm:4.3} does not apply to all linear algebraic 
groups over number fields, because the Baum--Connes conjecture with coefficients 
is still open for many reductive Lie groups.
A strong result in that direction was proven by Chabert, Echterhoff and Nest. We present a 
slightly simplified version:

\begin{thm}\label{thm:4.4} \textup{\cite[Theorem 1.2]{CEN}} \\
Let $G$ be a second countable locally compact group, such that the identity component $G^\circ$ 
is a Lie group and $G / G^\circ$ is compact. (For instance, $G$ can be a finite dimensional
Lie group with only finitely many components.) Then $G$ satisfies the twisted Baum--Connes 
conjecture.
\end{thm}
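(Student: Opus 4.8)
Since this is the Connes--Kasparov isomorphism for almost connected groups in its coefficient form, I would follow the standard reduction to connected Lie groups, carrying a general coefficient algebra $B$ through every step (so that the case $B = \mc{K}(\mc{H})$ demanded by the twisted conjecture is automatically included).

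The first task is to eliminate the non-Lie directions. By the solution of Hilbert's fifth problem (Gleason--Montgomery--Zippin--Yamabe), an almost connected locally compact group $G$ possesses arbitrarily small compact normal subgroups $N$ with $G/N$ a Lie group; since $G/G^\circ$ is compact, the component group of each such $G/N$ is compact and discrete, hence finite. The permanence of (twisted) BCC under quotients by compact normal subgroups --- combined with the continuity of K-theory on the analytic side and the colimit formula \eqref{eq:1.4} on the topological side --- then reduces the statement to Lie groups $H$ with finitely many connected components. For such an $H$ the identity component $H^\circ$ is open of finite index, and the permanence results of Chabert and Echterhoff for extensions by finite groups reduce the problem further to the case of a connected Lie group.

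For a connected Lie group $G$, let $R$ be its (connected, solvable, hence amenable) radical, so that $S := G/R$ is connected semisimple. Since $R$ is amenable, the permanence of BCC under extensions by amenable groups \cite{CEO} reduces the claim for $G$ to the claim for $S$. The group $S$ acts properly and isometrically on its associated symmetric space $X_S$, which is a complete, simply connected Riemannian manifold of nonpositive sectional curvature --- precisely one of the three kinds of spaces to which Section \ref{sec:1} applies. Hence Theorem \ref{thm:1.1} gives that $\mu^B$ is injective with image $\gamma \cdot K_*(C_r^*(S,B))$, and by criterion \eqref{eq:1.1} it remains to show that $\gamma$ acts as the identity. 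This is where Kasparov's Dirac--dual-Dirac method enters: for a connected semisimple Lie group one constructs a dual-Dirac element $\eta \in KK_0^S(\C, C_0(X_S))$ whose Kasparov product with the Dirac element recovers $\gamma$, and one proves $\gamma = 1 \in KK_0^S(\C,\C)$ \cite{Kas}. Unlike the situation for many discrete groups, no property $(T)$ obstruction arises at the level of the ambient connected group, so $\gamma = 1$ holds for every $S$ and therefore $\mu^B$ is an isomorphism for all coefficients $B$.

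I expect the genuine difficulty to be concentrated in this last step, the equality $\gamma = 1$ for connected semisimple Lie groups: the construction of the dual-Dirac element on the symmetric space and the KK-product computation showing that it inverts the Dirac element is the deep analytic content, and it must be carried out so as to remain valid with the noncommutative coefficients $B$ (in particular $B = \mc{K}(\mc{H})$). The bookkeeping of the permanence properties --- checking that each reduction step is compatible with arbitrary $G$-$C^*$-coefficients and with the passage to projective limits of Lie quotients --- is routine by comparison, but still needs care, especially the verification that the approximating quotients $G/N$ are genuinely almost connected Lie groups.
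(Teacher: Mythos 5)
The paper gives no proof of this statement: Theorem \ref{thm:4.4} is quoted verbatim from \cite[Theorem 1.2]{CEN}, so your proposal has to be measured against the actual argument of Chabert--Echterhoff--Nest. Your reduction skeleton matches theirs in spirit: pass to Lie quotients by arbitrarily small compact normal subgroups (Gleason--Montgomery--Zippin--Yamabe), split off the finite component group, and divide out the amenable radical using the extension permanence of \cite{CEO}. One bookkeeping point you gloss over is that these reductions are exactly where the \emph{twisted} formulation comes from: quotienting by a normal subgroup turns an action on $\mc K (\mc H)$ into a twisted action, which is only Morita-equivalent to an honest action after Packer--Raeburn stabilization \cite{Ech}, and the going-down machinery of \cite{CEO} for the pair $(G,N)$ requires BC for the quotient with these stabilized coefficients --- not with the arbitrary $B$ you claim to carry through every step.

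The fatal error is in your last step. It is false that $\gamma = 1$ in $KK_0^S (\C,\C)$ for every connected semisimple Lie group $S$: whenever $S$ is noncompact with Kazhdan's property (T) --- for instance $Sp(n,1)$ or any simple group of real rank at least $2$ such as $SL_3 (\R)$ --- one has $\gamma \neq 1$, since $\gamma = 1$ would imply K-amenability, which property (T) obstructs. Your assertion that ``no property (T) obstruction arises at the level of the ambient connected group'' is exactly backwards: the obstruction lives precisely on connected semisimple groups, and Kasparov's $\gamma = 1$ computations \cite{Kas} cover only rank-one families without (T) (e.g. $SO(n,1)$, and $SU(n,1)$ by Julg--Kasparov). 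The paper itself warns, directly below \eqref{eq:1.1}, that $\gamma = 1$ is known to be false for many groups acting on CAT(0)-spaces. A sanity check confirms the gap: if your argument worked with arbitrary coefficients $B$, it would yield full BCC for all almost connected groups, whereas the paper states just before Theorem \ref{thm:4.4} that BCC ``is still open for many reductive Lie groups'' --- the very reason Theorem \ref{thm:4.3}.b assumes amenability at the infinite places and Theorem \ref{thm:4.5} obtains only the twisted conjecture. What \cite{CEN} actually do at the base of the induction is invoke the Connes--Kasparov isomorphism for reductive Lie groups, i.e. Lafforgue's Banach KK-theoretic mechanism (Theorems \ref{thm:1.3} and \ref{thm:1.6} here: $\gamma$ acts as the identity on $K_* (\mc A (G,B))$ for unconditional completions, with the comparison to $C_r^*$ available for the restricted coefficients at hand), together with Wassermann's earlier work for linear reductive groups; this machinery circumvents $\gamma = 1$ entirely, and its limited coefficient range is precisely why the conclusion is the twisted conjecture rather than BCC.
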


We generalize this to adelic groups.

\begin{thm}\label{thm:4.5}
Let $\mc G$ be a linear algebraic group defined over a global field $k$.   
Then the adelic group $\mc G (\mb A_k)$ satisfies the twisted Baum--Connes conjecture.
\end{thm}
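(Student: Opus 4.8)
The plan is to run the argument of Theorem~\ref{thm:4.3}(b), but to feed in Theorem~\ref{thm:4.4} at the archimedean places instead of an amenability hypothesis. Fix a separable Hilbert space $H$ together with an action of $\mc G (\mb A_k)$ on $B = \mc K (H)$; it suffices to prove that $\mu^B$ is a bijection for this one coefficient algebra, since $H$ is arbitrary. As in~\eqref{eq:4.2} I would write $\mc G (\mb A_k)$ as the increasing union of the open subgroups
\[
G_n = \mc G (k_{v_1}) \times \cdots \times \mc G (k_{v_n}) \times
\prod_{i>n} \mc G (\mf o_{v_i}) \times \prod_{v | \infty} \mc G (k_v) ,
\]
so that by Theorem~\ref{thm:4.2} everything reduces to proving BC with the fixed coefficient $B$ for each $G_n$.

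Next I would factor each $G_n$ as a direct product $G_n = H_n \times K_\infty$ with
\[
H_n = \mc G (k_{v_1}) \times \cdots \times \mc G (k_{v_n}) \times
\prod_{i>n} \mc G (\mf o_{v_i}), \qquad K_\infty = \prod_{v | \infty} \mc G (k_v)
\]
(the factor $K_\infty$ being trivial when $k$ is a function field). The totally disconnected factor $H_n$ satisfies BC with \emph{arbitrary} separable coefficients: each $\mc G (k_{v_i})$ does so by Theorem~\ref{thm:3.3}, the compact group $\prod_{i>n} \mc G (\mf o_{v_i})$ does so by~\cite{Jul}, and the finite product inherits this by~\cite[Theorem~3.17.i]{ChEc2}. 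The factor $K_\infty$ is a finite product of real and complex points of linear algebraic groups, hence a Lie group with only finitely many connected components, so by Theorem~\ref{thm:4.4} it satisfies the twisted Baum--Connes conjecture, and in particular BC with the single coefficient $B = \mc K (H)$.

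The decisive step is to combine these two factors through the two--step assembly for the direct product. Writing the crossed product as an iterated one,
\[
C_r^* (G_n , B) \cong C_r^* \big( H_n , C_r^* (K_\infty , B) \big) ,
\]
with $H_n$ acting on $A := C_r^* (K_\infty , B)$ by the residual action coming from the $H_n$--action on $B$, the going--down machinery of~\cite{CEO} lets me factor $\mu^B$ for $G_n$ as the assembly map for $K_\infty$ with coefficient $B$, followed by the assembly map for $H_n$ with coefficient $A$. The first is an isomorphism by twisted BC for $K_\infty$. The crucial observation is that $A = C_r^* (K_\infty , B)$ is still separable --- $K_\infty$ is second countable and $B$ is separable --- but is no longer a compact--operator algebra; it is precisely here that I need the \emph{full} conclusion of Theorem~\ref{thm:3.3} for $H_n$, namely BC with arbitrary separable coefficients, to see that the second assembly map is also an isomorphism. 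Thus $G_n$ satisfies BC with coefficient $B$, and Theorem~\ref{thm:4.2} yields the same for $\mc G (\mb A_k)$.

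The hard part, which I would have to carry out carefully, is justifying this asymmetric product permanence. Twisted BC is \emph{not} stable under direct products on its own: the two--step factorization unavoidably produces the crossed--product coefficient $C_r^* (K_\infty , B)$, which leaves the class of admissible compact--operator coefficients. The argument goes through only because the two factors play asymmetric roles --- the inner, archimedean factor $K_\infty$ need handle just the original $B$, while the outer, totally disconnected factor $H_n$ must absorb the crossed product and therefore has to satisfy \emph{full} BCC with separable coefficients. Making the factorization of the assembly maps precise via~\cite{CEO}, and verifying that the residual $H_n$--action on $C_r^* (K_\infty , B)$ is exactly the one covered by Theorem~\ref{thm:3.3}, is the technical core of the proof.
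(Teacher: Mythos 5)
Your proposal is correct, and it assembles the same three ingredients as the paper's proof --- Theorem \ref{thm:4.4} for the archimedean part, the going-down theorem \cite[Theorem 2.1]{CEO} together with the untwisting results \cite[Theorem 1]{Ech} and \cite[Proposition 5.6]{ChEc1}, and BC with separable coefficients for the totally disconnected part --- but in the opposite order. The paper applies \cite[Theorem 2.1]{CEO} just once, to the full group $\mc G (\mb A_k)$ with $N = \prod_{v | \infty} \mc G (k_v)$: the twist is inessential because $\mc G (\mb A_k) = N \times \mc G (\mb A_{k,\mr{fin}})$, and the problem lands on $\mc G (\mb A_{k,\mr{fin}})$ with the separable coefficient algebra $C_r^* (N, \mc K (H))$, which is covered wholesale by Theorem \ref{thm:4.3}.a. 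You instead invoke Theorem \ref{thm:4.2} first and run the extension argument separately on each $G_n = H_n \times K_\infty$, re-deriving BC with arbitrary separable coefficients for $H_n$ by hand (Theorem \ref{thm:3.3}, \cite{Jul}, \cite[Theorem 3.17.i]{ChEc2}); that re-derivation is precisely the $G_n$-analysis already carried out inside the proof of Theorem \ref{thm:4.3}, so mathematically nothing is gained or lost --- the paper's ordering is simply more economical, quoting Theorem \ref{thm:4.3}.a instead of repeating it. Your diagnosis of the asymmetry is exactly right and is the mechanism of the paper's proof: the archimedean factor only ever sees the original $\mc K (H)$, while the totally disconnected factor must absorb the separable but non-elementary algebra $C_r^* (K_\infty, \mc K (H))$, and this is why full BCC with separable coefficients is needed there and only there.

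One point you elide should be made explicit, since it is the actual hypothesis of \cite[Theorem 2.1]{CEO}: it does not suffice that $K_\infty$ itself satisfies twisted BC. The theorem requires BC with coefficients $\mc K (H)$ for \emph{every} closed subgroup $L$ of $G_n$ containing $K_\infty$ cocompactly, that is, for every $L = C \times K_\infty$ with $C \subset H_n$ compact. Such a $C$ is profinite, so $L$ need not be a Lie group, and Theorem \ref{thm:4.4} cannot be applied to it naively; the verification goes through the identity component, as in the paper: $L / K_\infty$ is totally disconnected, hence $L^\circ = K_\infty^\circ$ is a Lie group and $L / L^\circ$ is compact, so Theorem \ref{thm:4.4} does apply to $L$. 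With that check added, and the untwisting step spelled out via \cite[Theorem 1]{Ech} and \cite[Proposition 5.6]{ChEc1} (for the direct product $G_n = H_n \times K_\infty$ one may take the coefficient algebra for $H_n$ to be $C_r^* (K_\infty, \mc K (H))$ itself, with $H_n$ acting pointwise through its action on $\mc K (H)$), your argument is complete.
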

\begin{proof}
Let $\mc K (H)$ be the algebra of compact operators on a separable Hilbert space $H$, and
suppose that it carries the structure of a $\mc G (\mb A_k)$-$C^*$-algebra. We have to show 
that $\mc G (\mb A_k)$ satisfies BC with coefficients $\mc K (H)$. 

Write $N = \prod_{v | \infty} \mc G (k_v)$. As the $\R$-points of an algebraic group, this is
a (finite dimensional) Lie group with only finitely many connected components. Suppose that
$L \subset \mc G (\mb A_k)$ is a closed subgroup containing $N$ as cocompact subgroup. Then
\[
L / N \subset \mc G (\mb A_k) / N \cong \mc G (\mb A_{k,\mr{fin}})
\]
is totally disconnected, and hence $L^\circ = N^\circ$. Consequentely $L / L^\circ = L / N^\circ$
is compact, and by Theorem \ref{thm:4.4} $L$ satisfies BC with coefficients $\mc K (H)$.

The above checks that the conditions of \cite[Theorem 2.1]{CEO} are fulfilled. The statement
of \cite[Theorem 2.1]{CEO} is: BC for $\mc G (\mb A_k)$ with coefficients $\mc K (H)$ is
equivalent to BC for $(\mc G (\mb A_k),N)$ with coefficients $C_r^* (N,\mc K (H))$. 
Here the action of $(\mc G (\mb A_k), N)$ on $C_r^* (N,\mc K (H))$ is twisted. By 
\cite[Theorem 1]{Ech} this twisted action is $\mc G (\mb A_k)$-equivariantly Morita 
equivalent to an ordinary action of $\mc G (\mb A_k) / N$ on another $C^*$-algebra, say $B$. 
Actually, since 
\[
\mc G (\mb A_k) = N \times \mc G (\mb A_{k,\mr{fin}})
\]
we may take $B = C_r^* (N,\mc K (H))$. Then BC for $(\mc G (\mb A_k),N)$ with coefficients \\
$C_r^* (N,\mc K (H))$ is equivalent to BC for $\mc G (\mb A_k)/N \cong \mc G (\mb A_{k,\mr{fin}})$ 
with coefficients $B$ \cite[Proposition 5.6]{ChEc1}. The latter holds by Theorem \ref{thm:4.3}.a.
\end{proof}

\appendix
\section{Permanence properties of Baum--Connes with coefficients}

For technical reasons, we prove some of the results in the body of our paper initially 
only for separable coefficient algebras. In this appendix we discuss how the Baum--Connes
conjecture with separable coefficients for an exact group $G$ implies BC for $G$
with coefficients in an arbitrary $G$-$C^*$-algebra. This is made possible by the
work of Chabert--Echterhoff \cite{ChEc2} on the continuity of the topological side
of BCC.

Let $G$ be a second countable, locally compact group and let $B$ be any
$G$-$C^*$-algebra. Let $\{ B_i : i \in I\}$ be the set of separable $G$-stable
sub-$C^*$-algebras of $B$, partially ordered by inclusion. The second countablity of
$G$ entails that every element of $B$ is contained in such a separable subalgebra $B_i$.
It follows that 
\begin{equation}\label{eq:A.1}
\lim_{i \in I} B_i \cong B \qquad \text{as } G-C^*\text{-algebras.}
\end{equation}

\begin{prop}\label{prop:A.1}
Let $G,B$ and the $B_i$ be as above. There is a natural isomorphism
\[
\lim_{i \in I} K_*^\top (G,B_i) \cong K_*^\top (G,B) . 
\]
\end{prop}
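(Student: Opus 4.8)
Each inclusion $B_i \hookrightarrow B$ is a $G$-equivariant $*$-homomorphism, so by functoriality of the topological side it induces a homomorphism $K_*^\top(G,B_i) \to K_*^\top(G,B)$; these are compatible with the connecting maps of the directed system $(B_i)_{i \in I}$ and hence assemble into a canonical homomorphism
\[
\Phi : \lim_{i \in I} K_*^\top(G,B_i) \to K_*^\top(G,B) .
\]
The plan is to show that $\Phi$ is an isomorphism by unwinding the definition of $K_*^\top$ and reducing the claim to the continuity of equivariant $KK$-theory in its coefficient variable, the latter being exactly the input provided by Chabert--Echterhoff \cite{ChEc2}.

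\textbf{Unwinding and interchanging the two colimits.}
Recall from \eqref{eq:1.4} that for any coefficient algebra $D$ one has
\[
K_*^\top(G,D) = \lim_{X' \subset X ,\, X'/G \text{ compact}} KK_*^G(C_0(X'), D) ,
\]
the colimit running over the $G$-invariant, $G$-cocompact closed subsets $X'$ of the universal proper $G$-space $X$, ordered by inclusion. Since $G$ is second countable these subsets may be taken second countable, so each $C_0(X')$ is a \emph{separable} $G$-$C^*$-algebra on which $G$ acts properly and cocompactly. The index set $I$ and the poset of subsets $X'$ are both directed, and filtered colimits of abelian groups commute; applying this to the bifunctor $(i,X') \mapsto KK_*^G(C_0(X'), B_i)$ gives
\[
\lim_{i \in I} K_*^\top(G,B_i) = \lim_{i \in I} \lim_{X'} KK_*^G(C_0(X'), B_i) = \lim_{X'} \lim_{i \in I} KK_*^G(C_0(X'), B_i) .
\]
Comparing with $K_*^\top(G,B) = \lim_{X'} KK_*^G(C_0(X'), B)$, and noting that $\Phi$ respects the $X'$-filtration, it suffices to prove, for each fixed cocompact $X'$ and naturally in $X'$, that the canonical map $\lim_{i \in I} KK_*^G(C_0(X'), B_i) \to KK_*^G(C_0(X'), B)$ is an isomorphism.

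\textbf{The continuity of $KK_*^G(C_0(X'),-)$.}
This remaining statement is precisely the continuity of $KK_*^G(C_0(X'), -)$ with respect to the inductive limit $B = \lim_i B_i$ of \eqref{eq:A.1}, and here separability of the first argument is essential. For surjectivity one represents a class in $KK_*^G(C_0(X'), B)$ by an equivariant Kasparov cycle; because $C_0(X')$ is separable, the module action, inner products and operator are determined by only countably much data, which therefore lie in a separable $G$-stable subalgebra of $B$ — and since $\{B_i\}$ is, by definition, the family of \emph{all} such subalgebras, this subalgebra is one of the $B_i$. Thus the cycle descends to $B_i$ and the class lies in the image. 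Injectivity is the same argument applied to homotopies: a homotopy, operator homotopy, or unitary equivalence trivializing a cycle over $B$ again uses only separably much of $B$, hence is defined over some $B_j \supseteq B_i$. The directedness of $I$, together with the fact that every separable $G$-stable subalgebra appears in the family, makes this cofinality argument valid for the general filtered colimit, not only for sequences. This yields the desired isomorphism, which is the continuity of the topological side of \cite{ChEc2}.

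\textbf{Main obstacle.}
The principal difficulty is the $KK$-continuity of the previous paragraph. The guiding idea is easy (a cycle over $B$ with separable first variable lives over a separable subalgebra), but the technical content lies in reducing the underlying Hilbert $B$-module, together with its $G$-action and the operator $F$, to a Hilbert $B_i$-module compatibly with equivariance and with the properness and $G$-cocompactness coming from the action on $X'$, and in checking that the resulting isomorphisms are genuinely natural in $X'$ so that the two filtered colimits may be interchanged and identified with $\Phi$. Granting this, $\Phi$ is an isomorphism and the proposition follows.
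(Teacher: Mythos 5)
Your strategy is sound, but it is genuinely different from the paper's. The paper never works at the level of Kasparov cycles: it invokes the continuity theorem of Chabert--Echterhoff \cite[Proposition 7.1]{ChEc2}, proven there for inductive systems of \emph{separable} $G$-algebras with separable limit, and the entire content of the paper's proof is the verification that their argument survives a non-separable limit $B$. The delicate point in that route is that the maps into the limit group are Kasparov products as in \eqref{eq:A.2}, and one must know that associativity of these products still holds when the last variable $B$ is non-separable; the paper settles this by appeal to \cite[Theorems 2.11 and 2.14.5]{Kas}, after which the remaining steps of \cite{ChEc2} use only constructions with commutative $C^*$-algebras. You instead interchange the two filtered colimits (a formal step, also implicit in the paper) and prove the needed continuity of $KK^G_*(C_0(X'),-)$ directly, by separable reduction of cycles and homotopies. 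It is worth stressing why your version of continuity is the right one: $KK^G(A,-)$ does \emph{not} commute with general inductive limits in the second variable (already non-equivariantly there are $\mathrm{Hom}/\mathrm{Ext}$-type counterexamples), but along your ``fat'' system of \emph{all} separable $G$-stable subalgebras the cofinality argument repairs this, precisely because a single cycle or homotopy involves only countably much data and every separable $G$-stable subalgebra occurs in the system; second countability of $G$ and separability of $C_0(X')$ enter exactly where you say. What your route buys: it avoids Kasparov products altogether, hence sidesteps the associativity issue for non-separable $B$ that the paper must address, and it is self-contained rather than a patch on \cite{ChEc2}. What it costs: the separable-reduction lemma, which you explicitly grant rather than prove, is the whole technical burden --- one must close a countable generating set under $F$, $F^*$, $\phi$ of a dense subset of $C_0(X')$, a dense subset of $G$, and finite-rank approximants of all compactness witnesses, iterate countably, check that the reduced module tensors back to the original cycle, and then redo everything for homotopies over $C([0,1],B)$ with control of the endpoint evaluations (using that subalgebras of the form $C([0,1],B_0)$ are cofinal among separable subalgebras of $C([0,1],B)$). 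These details are standard but lengthy, and they are nowhere in the paper, which outsources the labor to \cite{ChEc2}; with them written out, your argument is a complete and arguably more elementary proof of the proposition.
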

\begin{proof}
In \cite[Proposition 7.1]{ChEc2} this was proven when $B$ is separable and 
$\{ B_i : i \in I\}$ is an arbitrary inductive system of separable $G$-$C^*$-algebras
with direct limit $B$. We check that the arguments in \cite{ChEc2} also work when $B$
is not separable. 

In \cite{ChEc2} $K_*^\top (G,B)$ is exhibited as a direct limit of groups
$KK_*^G (C_0 (X),B_i)$, where $i \in I$ and $X$ runs through some collection of
proper $G$-spaces. The maps relating these KK-groups to the limit group are given by
Kasparov products
\begin{equation}\label{eq:A.2}
KK_*^G (C_0 (X'),C_0 (X)) \otimes_\Z KK_*^G (C_0 (X),B_i) \otimes_\Z KK_*^G (B_i,B)
\to KK_*^G (C_0 (X'),B) .
\end{equation}
The only involved element of $KK_*^G (B_i,B)$ is associated to the inclusion $B_i \to B$,
while the only relevant elements of $KK_*^G (C_0 (X'),C_0 (X))$ are those induced by
a continuous map $X \to X'$ and Bott elements in $KK_*^G (C_0 (X \otimes \R), C_0 (X))$
or $KK_*^G (C_0 (X'), C_0 (X' \otimes \R))$. As Chabert and Echterhoff observe, the 
associativity of the Kasparov product is needed to exchange the order of certain direct
limits. After that, their arguments do not use any properties of the separable coefficient
algebras $B_i$, they only involve various constructions with commutative $C^*$-algebras.

We point out that, by \cite[Theorems 2.11 and 2.14.5]{Kas}, the associativity of the 
Kasparov product in \eqref{eq:A.2} holds even if $B$ is not separable. With this in mind,
the entire proof of \cite[Proposition 7.1]{ChEc2} also applies to our possibly 
non-separable $G$-$C^*$-algebra $B$.
\end{proof}

As the continuity of the analytic side of Baum--Connes follows from exactness of the group,
Proposition \ref{prop:A.1} has the following consequence:

\begin{cor}\label{cor:A.2}
Let $G$ be a second countable, exact, locally compact group. Suppose that $G$
satisfies the Baum--Connes conjecture with coefficients in any separable $G$-$C^*$-algebra.
Then $G$ satisfies BCC, that is, BC with arbitrary (possibly non-separable) coefficients.
\end{cor}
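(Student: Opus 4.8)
The plan is to exhibit both sides of the assembly map for the coefficient algebra $B$ as inductive limits over the directed system $\{B_i : i \in I\}$ of separable $G$-stable subalgebras, and to deduce that $\mu^B$ is an isomorphism from the fact that each $\mu^{B_i}$ is one (which holds by hypothesis, as each $B_i$ is separable). The system is directed, since the $G$-$C^*$-subalgebra generated by $B_i \cup B_j$ is again separable and $G$-stable; together with \eqref{eq:A.1} this yields $B \cong \varinjlim_i B_i$.

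For the topological side there is nothing further to do: Proposition \ref{prop:A.1} already supplies the natural isomorphism $K_*^\top(G,B) \cong \varinjlim_i K_*^\top(G,B_i)$.

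For the analytic side I would establish $K_*(C_r^*(G,B)) \cong \varinjlim_i K_*(C_r^*(G,B_i))$ in two steps. First, continuity of the reduced crossed product, $C_r^*(G,B) \cong \varinjlim_i C_r^*(G,B_i)$: here exactness of $G$ guarantees that the reduced-crossed-product functor is continuous. Concretely, each inclusion $B_i \hookrightarrow B_j$ induces an isometric embedding $C_r^*(G,B_i) \hookrightarrow C_r^*(G,B_j)$, and since $\bigcup_i C_c(G,B_i) = C_c(G,B)$ is dense in $C_r^*(G,B)$, the canonical map out of the inductive limit is a surjective isometry, hence an isomorphism. Second, continuity of topological K-theory, which commutes with inductive limits of $C^*$-algebras without any hypothesis on $G$.

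It remains to glue the two sides together. The assembly map $\mu^{(-)}$ is natural in the coefficient algebra, so under the identifications of the two previous paragraphs $\mu^B$ is identified with $\varinjlim_i \mu^{B_i}$. Since each $\mu^{B_i}$ is an isomorphism and an inductive limit of isomorphisms of abelian groups is again an isomorphism, $\mu^B$ is an isomorphism, as claimed. I expect the main point requiring care to be the analytic continuity: K-theory commutes with inductive limits for free, but the identification $C_r^*(G,B) \cong \varinjlim_i C_r^*(G,B_i)$ is precisely where exactness of $G$ enters, and one must also verify that the individual assembly maps $\mu^{B_i}$ are compatible with the transition maps of the system, so that their colimit really computes $\mu^B$.
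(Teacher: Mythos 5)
Your proposal matches the paper's proof essentially step for step: the paper likewise combines Proposition \ref{prop:A.1} for the topological side with the exactness-derived natural isomorphism $\varinjlim_{i \in I} C_r^* (G,B_i) \cong C_r^* (G,B)$, continuity of K-theory, and the naturality of the assembly map (the commutative diagram induced by the inclusions $B_i \to B$). Your additional details --- directedness of the system $\{B_i\}$ and the isometric-embedding-plus-density argument for continuity of the reduced crossed product --- are correct refinements of steps the paper leaves implicit.
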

\begin{proof}
The inclusion $B_i \to B$ and the naturality of the assembly map \cite[\S 9]{BCH} yield
a commutative diagram
\[
\begin{array}{ccc}
K_*^\top (G,B_i) & \xrightarrow{\mu^{B_i}} & K_* (C_r (G,B_i)) \\
\downarrow & & \downarrow \\
K_*^\top (G,B) & \xrightarrow{\mu^B} & K_* (C_r (G,B)) 
\end{array}
\]
From the exactness of $G$ and \eqref{eq:A.1} we get a natural isomorphism
\[
\lim_{i \in I} C_r^* (G,B_i) \cong C_r^* (G,B) . 
\]
Combine these with Proposition \ref{prop:A.1}. Using the assumption that every
$\mu^{B_i}$ is an isomorphism, we find that $\mu^B$ is an isomorphism as well.
\end{proof}

From \cite[Theorem 2.5]{ChEc2} and Corollary \ref{cor:A.2} we get:

\begin{thm}\label{thm:5.3}
Let $G$ be a second countable, exact, locally compact group and let $H$ be 
a closed subgroup of $G$. Suppose that $G$ satisfies the Baum--Connes conjecture with 
arbitrary separable coefficients. 
Then $H$ satisfies BCC, that is, BC with arbitrary coefficients.
\end{thm}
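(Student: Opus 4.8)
The plan is to combine the two results that precede this statement: the Chabert--Echterhoff restriction theorem \cite[Theorem 2.5]{ChEc2}, which transfers BC from $G$ to its closed subgroups at the level of separable coefficients, and Corollary \ref{cor:A.2}, which removes the separability hypothesis for exact groups. The only preliminary observation needed is that $H$ is itself exact: exactness passes to closed subgroups, so being a closed subgroup of the exact group $G$, the group $H$ is exact and second countable, which is exactly what is required to invoke Corollary \ref{cor:A.2} at the end.

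First I would reduce to separable coefficients for $H$. Let $B$ be an arbitrary separable $H$-$C^*$-algebra. The standard induction construction produces a $G$-$C^*$-algebra $\ind_H^G B$, and because $G$ is second countable (hence so is $G/H$), this induced algebra is again separable. The content of \cite[Theorem 2.5]{ChEc2} is that the assembly map $\mu^{\ind_H^G B}$ for $G$ is an isomorphism if and only if $\mu^B$ for $H$ is an isomorphism. Since $G$ is assumed to satisfy the Baum--Connes conjecture with arbitrary separable coefficients, in particular $\mu^{\ind_H^G B}$ is an isomorphism, and the equivalence then forces $\mu^B$ to be an isomorphism. As $B$ was an arbitrary separable $H$-$C^*$-algebra, this shows that $H$ satisfies BC with arbitrary separable coefficients.

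To finish, I would simply apply Corollary \ref{cor:A.2} to the group $H$, which we have just noted is second countable, exact, and locally compact, and which satisfies BC with arbitrary separable coefficients by the previous step. The conclusion of that corollary is precisely that $H$ satisfies BCC, i.e.\ BC with arbitrary (possibly non-separable) coefficients. The main point that requires care is matching the hypotheses of \cite[Theorem 2.5]{ChEc2} exactly---in particular checking that the induced coefficient algebra stays separable and that the induction/restriction comparison is set up correctly---but given the two cited results the argument is essentially a two-line chaining and presents no genuine obstacle.
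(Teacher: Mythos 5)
Your proposal is correct and follows exactly the route the paper takes: the paper derives this theorem in one line from \cite[Theorem 2.5]{ChEc2} combined with Corollary \ref{cor:A.2}, which is precisely your two-step chain. Your additional observations---that $H$ inherits exactness and second countability from $G$, and that the induced algebra $\mathrm{ind}_H^G B$ remains separable---are the correct (and implicitly assumed) details making that chain work.
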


Chabert, Echterhoff and Oyono-Oyono \cite{CEO} proved a permanence property of BCC with 
respect to extensions, which we now generalize to arbitrary coefficient algebras.

\begin{thm}\label{thm:A.4}
Let $G$ be a second countable, exact, locally compact group and let $N$ be 
a closed normal amenable subgroup of $G$. Suppose that $G/N$ satisfies BC with arbitrary 
separable coefficients. Then $G$ satisfies BCC (with arbitrary coefficients).
\end{thm}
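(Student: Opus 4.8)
The plan is to run the extension argument of Chabert--Echterhoff--Oyono-Oyono, essentially repeating the proof of Theorem \ref{thm:4.5} but now for an arbitrary coefficient algebra in place of $\mc K(H)$, and to handle the (lack of) separability with Corollary \ref{cor:A.2}.

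First, since $G$ is exact, Corollary \ref{cor:A.2} reduces the statement to showing that $G$ satisfies BC with an arbitrary \emph{separable} $G$-$C^*$-algebra $B$. Fix such a $B$. Because $N$ is second countable, $C_r^*(N,B)$ is again separable. The aim is then to transfer the problem from $G$ to $G/N$ via \cite[Theorem 2.1]{CEO}, which states that BC for $G$ with coefficients $B$ is equivalent to BC for the pair $(G,N)$ with the twisted coefficient algebra $C_r^*(N,B)$, under the hypothesis that every closed subgroup $L$ with $N \subseteq L$ and $L/N$ compact satisfies BC with the relevant coefficients.

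The key point is that the amenability of $N$ makes this hypothesis automatic. Indeed, such an $L$ is an extension of the amenable group $N$ by the compact, hence amenable, group $L/N$, so $L$ is itself amenable and therefore satisfies BCC by \cite{HiKa}. This replaces the appeal to Theorem \ref{thm:4.4} (finite-dimensional Lie groups) that was made at the corresponding point of the proof of Theorem \ref{thm:4.5}. With the hypothesis of \cite[Theorem 2.1]{CEO} in hand, it remains to untwist: since $N$ is amenable the full and reduced crossed products coincide, and Echterhoff's stabilization theorem \cite[Theorem 1]{Ech} produces an ordinary action of $G/N$ on a $C^*$-algebra $B'$ that is $G$-equivariantly Morita equivalent to the twisted $(G,N)$-action on $C_r^*(N,B)$. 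Here $B'$ may be taken separable, as it is assembled from the separable algebra $C_r^*(N,B)$ by a crossed product and a stabilization along the second countable group $G/N$. By \cite[Proposition 5.6]{ChEc1} the twisted BC statement for $(G,N)$ is then equivalent to ordinary BC for $G/N$ with coefficients $B'$.

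Assembling the chain, BC for $G$ with coefficients $B$ is equivalent to BC for $G/N$ with the separable coefficient algebra $B'$, and the latter holds by hypothesis. As $B$ was an arbitrary separable coefficient algebra, Corollary \ref{cor:A.2} upgrades this to BCC for $G$. The main obstacle, relative to Theorem \ref{thm:4.5}, is that the extension $1 \to N \to G \to G/N \to 1$ need not split, so one cannot short-circuit the untwisting by taking $B' = C_r^*(N,B)$ as in that proof; instead one must invoke the non-split Morita equivalence of \cite{Ech} and verify that separability survives each step, so that the hypothesis on $G/N$ remains applicable.
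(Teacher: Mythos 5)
Your proposal is correct and follows essentially the same route as the paper's own proof: reduction to separable coefficients via Corollary \ref{cor:A.2}, verification of the hypothesis of \cite[Theorem 2.1]{CEO} through amenability of compact extensions of $N$ and \cite{HiKa}, untwisting via \cite[Theorem 1]{Ech}, and concluding with \cite[Proposition 5.6]{ChEc1}. Your added remarks on separability bookkeeping and on why the non-split case genuinely needs the Morita equivalence (unlike the split situation in Theorem \ref{thm:4.5}) are accurate but not a departure from the paper's argument.
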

\begin{proof}
By Corollary \ref{cor:A.2} it suffices to prove BC for $G$ with coefficients in an
arbitrary separable $G$-$C^*$-algebra $B$.

Suppose that $L$ is an extension of $N$ by a compact group. Then $L$ inherits the
amenability of $N$, so by \cite{HiKa} it satisfies BC with arbitrary separable coefficients.
This shows that the assumptions of \cite[Theorem 2.1]{CEO} are satisfied by $(G,N)$.
As moreover $B$ is separable, we may apply that result. It says that the bijectivity of
the assembly map $\mu^B$ is equivalent to: 
\[
\text{the pair } (G,N) \text{ satisfies Baum--Connes with coefficients } C_r^* (N,B). 
\]
The statement involves a twisted action of $(G,N)$ on $C_r^* (N,B)$. Fortunately, by 
\cite[Theorem 1]{Ech} this twisted action is $G$-equivariantly Morita equivalent to an 
ordinary action of $G/N$ on another separable $C^*$-algebra, say $B'$. Then Baum--Connes 
for $(G,N)$ with coefficients $C_r^* (N,B)$ is equivalent to Baum--Connes for $G/N$ with 
coefficients $B'$ \cite[Proposition 5.6]{ChEc1}. That holds by assumption.
\end{proof}


\begin{thebibliography}{99}

\bibitem[BaCo]{BaCo} P.F. Baum, A. Connes,
"K-theory for discrete groups", pp. 1--20 in:
\emph{Operator algebras and applications Vol. I},
London Math. Soc. Lecture Note Ser. {\bf 135},
Cambridge University Press, Cambridge, 1988

\bibitem[BCH]{BCH} P.F. Baum, A. Connes, N. Higson, 
``Classifying space for proper actions and K-theory of group $C^*$-algebras",
pp. 240--291 in: \emph{$C^*$-algebras: 1943--1993. A fifty year celebration}, 
Contemp. Math. {\bf 167}, American Mathematical Society, Providence RI, 1994

\bibitem[BGW]{BGW} P.F. Baum, E. Guentner, R. Willett,
``Expanders, exact crossed products, and the Baum--Connes conjecture"
arXiv:1311.2343, 2015

\bibitem[BMP1]{BMP1} P. Baum, S. Millington, R. Plymen,
``A proof of the Baum--Connes conjecture for reductive adelic groups",
C.R. Acad. Sci. Paris {\bf 332} (2001), 195--200

\bibitem[BMP2]{BMP2} P. Baum, S. Millington, R. Plymen,
``Local-global principle for the Baum--Connes conjecture with coefficients",
K-theory {\bf 28} (2003), 1--18

\bibitem[Bos]{Bos} J.-B. Bost,
``Principe d'Oka, K-th\'eorie et syst\`emes dynamiques non commutatifs",
Invent. Math. {\bf 101} (1990), 261--333

\bibitem[Bou]{Bou} N. Bourbaki, \emph{\'El\'ements de math\'ematique Livre III. 
Topologie g\'en\'erale}, Hermann, Paris, 1960

\bibitem[BrTi]{BrTi1} F. Bruhat, J. Tits, ``Groupes r\'eductifs sur un corps local: I.
Donn\'ees radicielles valu\'ees",
Publ. Math. Inst. Hautes \'Etudes Sci. {\bf 41} (1972), 5--251

\bibitem[ChEc1]{ChEc1} J. Chabert, S. Echterhoff,
``Twisted equivariant $KK$-theory and the Baum--Connes conjecture for group extensions",
K-theory {\bf 23} (2001), 157--200

\bibitem[ChEc2]{ChEc2} J. Chabert, S. Echterhoff,
``Permanence properties of the Baum--Connes conjecture",
Documenta Math. {\bf 6} (2001), 127--183

\bibitem[CEN]{CEN} J. Chabert, S. Echterhoff, R. Nest,
``The Connes--Kasparov conjecture for almost connected groups and for linear $p$-adic groups",
Publ. Math. Inst. Hautes \'Etudes Sci. {\bf 97} (2003), 239--278

\bibitem[CEO]{CEO} J. Chabert, S. Echterhoff, H. Oyono-Oyono,
``Going-down functors, the K\"unneth formula, and the Baum--Connes conjecture",
Geom. funct. Anal. {\bf 14} (2004), 491--528   

\bibitem[Ech]{Ech} S. Echterhoff,
``Morita equivalent twisted actions and a new version of the Packer--Raeburn stabilization trick",
J. London Math. Soc. {\bf 50} (1994), 170--186

\bibitem[ELN]{ELN} S. Echterhoff, K. Li, R. Nest,
``The orbit method for the Baum--Connes conjecture for algebraic groups over local function fields",
J. Lie Theory {\bf 28} (2018), 323--341

\bibitem[HC]{HC} Harish-Chandra, ```Spherical functions on a semisimple Lie group I",
Amer. J. Math. {\bf 80} (1958), 241--310

\bibitem[HiKa]{HiKa} N. Higson, G.G. Kasparov,
``E-theory and KK-theory for groups which act properly and isometrically on Hilbert space",
Invent. Math. {\bf 144} (2001), 23--74

\bibitem[HLS]{HLS} N. Higson, V. Lafforgue, G. Skandalis,
``Counterexamples to the Baum--Connes conjecture",
Geom. funct. anal. {\bf 12} (2002), 330--354

\bibitem[Jul1]{Jul} P. Julg,
``K-th\'eorie \'equivariante et produits crois\'es",
C.R. Acad. Sci. Paris {\bf 292} (1981), 629--632

\bibitem[Jul2]{Jul2} P. Julg, 
``Travaux de N. Higson and G. Kasparov sur la conjecture de Baum-Connes",
Ast\'erisque {\bf 252} (1998), 151--183

\bibitem[Kas]{Kas} G.G. Kasparov,
``Equivariant K-theory and the Novikov conjecture",
Invent. Math. {\bf 91} (1988), 147--201

\bibitem[KaSk1]{KaSk} G.G. Kasparov, G. Skandalis,
``Groups acting on buildings, operator K-theory, and Novikov's conjecture",
K-Theory {\bf 4.4} (1991), 303--337

\bibitem[KaSk2]{KaSk2} G.G. Kasparov, G. Skandalis,
``Groups acting properly on ``bolic" spaces and the Novikov conjecture",
Annals Math. {\bf 158} (2003), 165--206

\bibitem[KiWa]{KiWa} E. Kirchberg, S. Wassermann,
``Exact groups and continuous bundles of $C^*$-algebras",
Math. Ann. {\bf 315.2} (1999), 169--203

\bibitem[Laf]{Laf} V. Lafforgue, ``K-th\'eorie bivariante pour les alg\`ebres de Banach
et conjecture de Baum--Connes", Invent. Math. {\bf 149.1} (2002), 1--95

\bibitem[PlRa]{PlRa} V.P. Platonov, A. Rapinchuk,
\emph{Algebraic groups and number theory},
Pure and Applied Mathematics {\bf 139}, Academic Press, 1994

\bibitem[Sch]{Sch} L.B. Schweitzer,
``Dense m-convex Fr\'echet subalgebras of operator algebra crossed products by Lie groups",
Int. J. Math. {\bf 4.4} (1993), 661--673

\bibitem[Sol1]{SolThesis} M. Solleveld,
\emph{Periodic cyclic homology of affine Hecke algebras},
PhD Thesis, Universiteit van Amsterdam, 2007, arXiv:0910.1606

\bibitem[Sol2]{Sol2} M. Solleveld,
``Pseudo-reductive and quasi-reductive groups over non-archimedean local fields",
J. Algebra {\bf 510} (2018), 331--392

\bibitem[Tit]{Tit} J. Tits, ``Reductive groups over local fields",
pp. 29--69 in: \emph{Automorphic forms, representations and L-functions Part I},
Proc. Sympos. Pure Math. {\bf 33}, American Mathematical Society, Providence RI, 1979

\bibitem[Val]{Val} A. Valette,
\emph{Introduction to the Baum-Connes conjecture},
Lectures in Mathematics ETH Z\"urich, Birkh\"auser Verlag, Basel, 2002

\bibitem[Vig]{Vig} M.-F. Vign\'eras
``On formal dimensions for reductive $p$-adic groups",
pp. 225--266 in: \emph{Festschrift in honor of I.I. Piatetski-Shapiro
on the occasion of his sixtieth birthday, Part I},
Israel Math. Conf. Proc. {\bf 2}, Weizmann, Jerusalem, 1990

\bibitem[Wal]{Wal} J.-L. Waldspurger,
``La formule de Plancherel pour les groupes $p$-adiques (d'apr\`es Harish-Chandra)",
J. Inst. Math. Jussieu {\bf 2.2} (2003), 235--333

\bibitem[Wei]{Wei} A. Weil,
\emph{Basic number theory 3rd ed.},
Grundlehren der mathematischen Wissenschaften {\bf 144}, Springer-Verlag, 1974

\end{thebibliography}
\end{document}